\documentclass[11pt]{amsart}
\usepackage[a4paper,hmargin={2.5cm,2.5cm},vmargin={2.5cm,2.5cm}]{geometry}

\usepackage[utf8]{inputenc}
\usepackage{lipsum}
\usepackage[T1]{fontenc}
\usepackage{lmodern}
\usepackage{mathrsfs}
\usepackage{xcolor} 
\usepackage{tikz}
\usepackage{booktabs}
\usetikzlibrary{calc}
\usepackage[T1]{fontenc}
\usepackage{pgfplots}
\pgfplotsset{compat=1.18}
\usepackage[english]{babel}
\usepackage{csquotes}
\usepackage{hyperref}
\usepackage[shortlabels]{enumitem}
\usepackage{libertinus}
\usepackage{libertinust1math}
\usepackage{cite}

\usepackage{mathtools}
\mathtoolsset{showonlyrefs}

\title{Least energy solutions for a quasilinear Neumann problem}
\date{\today}
\author{Annamaria Canino}
\address{ Dipartimento di Matematica e Informatica, Università della Calabria, Ponte Pietro Bucci cubo 31B,   87036 ARCAVACATA DI RENDE, COSENZA, ITALY}
\email{annamaria.canino@unical.it}
\author{Simone Mauro}
\address{ Dipartimento di Matematica e Informatica, Università della Calabria, Ponte Pietro Bucci cubo 31B,   87036 ARCAVACATA DI RENDE, COSENZA, ITALY}
\email{simone.mauro@unical.it}   

\numberwithin{equation}{section}

\usepackage[leqno]{amsmath}
\usepackage{amssymb,amsthm}

\usepackage{tikz-cd}

\theoremstyle{plain}
\newtheorem{theorem}{Theorem}[section]
\newtheorem{lemma}[theorem]{Lemma}
\newtheorem{proposition}[theorem]{Proposition}
\newtheorem{corollary}[theorem]{Corollary}

\theoremstyle{plain}
\newtheorem{definition}[theorem]{Definition}

\newtheorem{example}{Example}[section]

\theoremstyle{plain}
\newtheorem{remark}[theorem]{Remark}

\RequirePackage{dsfont}

\newcommand{\R}{\field{R}}

\newcommand{\field}[1]{\mathbb{#1}}
\newcommand{\wto}{\rightharpoonup}

\tikzset{%
    symbol/.style={%
        draw=none,
        every to/.append style={%
            edge node={node [sloped, allow upside down, auto=false]{$#1$}}}
    }
}
\usepackage{enumitem}

\tikzset{shorten <>/.style={shorten >=#1,shorten <=#1}}

\begin{document}

\begin{abstract}
We study the existence of nonconstant weak solutions and the qualitative behavior of the least energy level for a class of quasilinear elliptic problems with Neumann boundary conditions of the form
\[
-\sum_{i,j=1}^N D_{x_j}(a_{ij}(x,u)D_{x_i}u)
+\frac12\sum_{i,j=1}^N D_s a_{ij}(x,u)D_{x_i}uD_{x_j}u
=g(u)-\lambda u \quad \text{in }\Omega.
\]
Since the associated energy functional is neither of class $C^1$ nor locally Lipschitz, the classical Nehari manifold method cannot be applied directly. We therefore rely on variational techniques combined with nonsmooth critical point theory.

For $\lambda>0$, we prove the existence of least
energy solutions and show that, under suitable
assumptions, they are nonconstant for large
$\lambda$ and constant for sufficiently small
$\lambda$. Moreover, the least energy level
tends to $+\infty$ as $\lambda\to+\infty$.

In the indefinite regime, a key ingredient is a spectral decomposition of $H^1(\Omega)$ associated with the linear operator
\[
\mathcal L_0u
:=-\sum_{i,j=1}^N
D_{x_j}\bigl(a_{ij}(x,0)D_{x_i}u\bigr).
\]

\end{abstract}

\keywords{Quasilinear elliptic equations; Neumann boundary conditions; least energy solutions; variational methods; nonsmooth critical point theory}

\subjclass[2020]{35J62; 35J20; 35J60; 35B38; 58E05}

\maketitle
\tableofcontents
\section{Introduction}

In this paper we study the existence and asymptotic behavior of least energy solutions for a class of quasilinear elliptic equations with Neumann boundary conditions.

Our problem can be viewed as a generalization of the classical semilinear Neumann problem
\[
-\Delta u + \lambda u = |u|^{p-2}u \quad \text{in } \Omega, \qquad u>0\quad\text{in $\Omega$} \qquad \frac{\partial u}{\partial\nu} = 0 \quad \text{on } \partial\Omega,
\]
which has been widely investigated in the literature. In particular, it is known that the behavior of solutions strongly depends on the parameter $\lambda>0$: for sufficiently small positive $\lambda$, nonconstant positive solutions are excluded under suitable assumptions, whereas for large $\lambda$ nonconstant positive solutions appear and may exhibit concentration phenomena \cite{lin1988large,lin2006diffusion}.

Most of the available results concern positive solutions. For instance, in \cite{ni1991shape,ni1993peaks}, the authors study positive solutions and show that least energy solutions may concentrate near boundary points with maximal curvature.

{
In chemotaxis models, which describe the directed movement and
aggregation of cells in response to chemical signals, the stationary
equations can be reduced, under suitable assumptions, to a semilinear
Neumann problem of the form considered above. Nonconstant positive
solutions then correspond to spatially inhomogeneous stationary states;
see Lin, Ni and Takagi \cite{lin1988large}.

A further connection comes from the nonlinear Schr\"odinger equation
\[
-i\partial_t\Psi-\Delta\Psi=|\Psi|^{p-2}\Psi.
\]
Indeed, standing waves of the form
$\Psi(t,x)=e^{i\lambda t}u(x)$ have profiles satisfying
$-\Delta u+\lambda u=|u|^{p-2}u$, with the corresponding homogeneous
Neumann boundary condition. The nonlinear Schr\"odinger framework arises in nonlinear
optics, where it describes optical solitons
\cite{kivshar2003optical}, and, in the Gross--Pitaevskii
formulation, in models of Bose--Einstein condensates
\cite{pitaevskii2016bose}.
}

Our goal is to extend the study of least energy solutions to a quasilinear setting, where the diffusion operator depends on the solution itself, without imposing any sign condition on the solutions. In this framework, we show that the interplay between the parameter $\lambda$ and the nonlinear structure of the operator still governs the existence and qualitative properties of solutions, yielding a behavior that parallels, in part, the semilinear case.

\subsection{Problem setting and variational framework}

Let $\lambda \in \mathbb{R}$, $p \in \left(2,\frac{2N}{N-2}\right)$, and let $\Omega \subset \mathbb{R}^N$ be a bounded domain with $N \ge 3$ and Lipschitz boundary $\partial\Omega$.

The results presented below remain valid also in the cases $N=1,2$, where the condition on $p$ reduces to $p>2$.

We consider the following problem
\begin{equation}\tag{$\mathcal P_\lambda$}\label{P}
\left\{
\begin{aligned}
-&\sum_{i,j=1}^ND_{x_j}(a_{ij}(x,u)D_{x_i}u)+\frac12\sum_{i,j=1}^ND_sa_{ij}(x,u)D_{x_i}uD_{x_j}u=g(u)-\lambda u\quad \text{in $\Omega$},\\
&\sum_{i,j=1}^Na_{ij}(x,u)D_{x_i}u\ \nu_j=0\quad\text{on $\partial\Omega$},
\end{aligned}
\right.
\end{equation}
where $\nu=(\nu_1,\dots,\nu_N)$ is the outward normal, $a_{ij}:\Omega\times\R\to\R$ is a $C^1$-Carathéodory function, and $g:\R\to\R$ is a continuous function. 
{
Analogous extensions can be considered for nonlinearities
$g=g(x,s)$ and for homogeneous Dirichlet boundary conditions.
Under homogeneous Dirichlet boundary conditions, nonzero
constant solutions are excluded.
}
We assume that the coefficients $a_{ij}:\Omega\times\mathbb{R}\to\mathbb{R}$ are measurable in $x$ and continuously differentiable in $s$, and that there exist constants $C_0,\nu_0>0$, $q>2$, and $\gamma\in(0,q-2)$ such that, for a.e. $x\in\Omega$, every $s\in\mathbb{R}$, and every $\xi\in\mathbb{R}^N$, the following properties hold:
\begin{align}
\tag{$a.1$}\label{a.1}
    |a_{ij}(x,s)|,\ |D_s a_{ij}(x,s)|\le C_0, \qquad a_{ij}(x,s)=a_{ji}(x,s),
\end{align}
\begin{align}
\tag{$a.2$}\label{a.2}
    \sum_{i,j=1}^N a_{ij}(x,s)\xi_i \xi_j \ge \nu_0 |\xi|^2,
\end{align}
\begin{align}
\tag{$a.3$}\label{a.3}
    0 \le \sum_{i,j=1}^N \left( D_s a_{ij}(x,s)\xi_i \xi_j \right)s
    \le \gamma \sum_{i,j=1}^N a_{ij}(x,s)\xi_i \xi_j.
\end{align}

\begin{example}
Let $\varepsilon>0$ and define, for $(x,s)\in \Omega\times\mathbb{R}$,
\[
a_{ij}(x,s)=A(s)\,\delta_{ij},
\qquad 
A(s):=1+\varepsilon\frac{s^{2}}{1+s^{2}}
=\frac{1+(1+\varepsilon)s^2}{1+s^2}.
\]

Then $a_{ij}(x,s)=a_{ji}(x,s)$ and
\[
1\le A(s)<1+\varepsilon
\qquad \text{for every }s\in\mathbb{R}.
\]

\medskip

\noindent
\textbf{Verification of \eqref{a.1}.}
We have
\[
|a_{ij}(x,s)|\le (1+\varepsilon)\delta_{ij}.
\]
Moreover,
\[
A'(s)=\frac{2\varepsilon s}{(1+s^2)^2},
\]
and therefore
\[
|A'(s)|
\le
2\varepsilon \max_{t\ge 0}\frac{t}{(1+t^2)^2}
=
\frac{3\sqrt{3}}{8}\,\varepsilon.
\]
Hence
\[
|D_s a_{ij}(x,s)|
=
|A'(s)|\,\delta_{ij}
\le C_0,
\]
with
\[
C_0=\max\left\{1+\varepsilon,\frac{3\sqrt{3}}{8}\,\varepsilon\right\}.
\]

\medskip

\noindent
\textbf{Verification of \eqref{a.2}.}
For every $\xi\in\mathbb{R}^N$,
\[
\sum_{i,j=1}^N a_{ij}(x,s)\xi_i\xi_j
=
A(s)|\xi|^2
\ge
|\xi|^2.
\]
Thus \eqref{a.2} holds with $\nu_0=1$.

\medskip

\noindent
\textbf{Verification of \eqref{a.3}.}
We compute
\[
\sum_{i,j=1}^N \left(D_s a_{ij}(x,s)\xi_i\xi_j\right)s
=
A'(s)s\,|\xi|^2,
\]
and
\[
A'(s)s
=
\frac{2\varepsilon s^2}{(1+s^2)^2}
\ge 0.
\]
Moreover,
\[
\frac{A'(s)s}{A(s)}
=
\frac{2\varepsilon s^2}{(1+s^2)(1+(1+\varepsilon)s^2)}
\le
\frac{2\varepsilon s^2}{1+s^2}
\le
2\varepsilon.
\]
Hence
\[
0
\le
\sum_{i,j=1}^N \left(D_s a_{ij}(x,s)\xi_i\xi_j\right)s
\le
2\varepsilon \sum_{i,j=1}^N a_{ij}(x,s)\xi_i\xi_j.
\]

Therefore \eqref{a.3} holds with $\gamma\ge2\varepsilon$. In particular, given any $\gamma>0$, it is enough to choose $\varepsilon\le \gamma/2$.

\end{example}

Concerning the nonlinearity $g:\mathbb{R}\to\mathbb{R}$, we assume that it is continuous and satisfies
\begin{align}
\tag{$g.1$}\label{g.1'}
    |g(s)|\le C_0(1+|s|^{p-1}), \qquad \lim_{s\to0}\frac{g(s)}{s}=0,
\end{align}
and
\begin{align}
\tag{$g.2$}\label{g.2}
    g(s)s \ge q G(s) > 0 \quad \text{for all } s\neq 0,
\end{align}
where $G(s)=\displaystyle\int_0^sg(t)\,dt$.

{
Throughout the paper we also assume
\begin{equation}\label{symmetry}
a_{ij}(x,-s)=a_{ij}(x,s),\qquad g(-s)=-g(s)
\end{equation}
for a.e. $x\in\Omega$, every $s\in\mathbb R$, and all $i,j=1,\dots,N$. 
}

In particular, condition \eqref{g.1'} implies that, for every $\varepsilon>0$, there exists $C_\varepsilon>0$ such that
\begin{align}
\label{g.1}
    |g(s)| \le \varepsilon |s| + C_\varepsilon |s|^{p-1}.
\end{align}

We say that $u\in H^1(\Omega)$ is a weak solution of \eqref{P} if
\[
\int_\Omega \sum_{i,j=1}^Na_{ij}(x,u)D_{x_i}uD_{x_j}v+\frac12\int_\Omega\sum_{i,j=1}^N\left(D_sa_{ij}(x,u)D_{x_i}uD_{x_j}u\right) v=\int_\Omega g(u)v-\lambda\int_\Omega uv,
\]
for every $v\in H^1(\Omega)\cap L^\infty(\Omega)$.

We define the energy functional $J_\lambda:H^1(\Omega)\to\R$ as follows
\[
J_\lambda(u)=\frac12\int_\Omega \sum_{i,j=1}^Na_{ij}(x,u)D_{x_i}uD_{x_j}u+\frac{\lambda}{2}\int_\Omega u^2-\int_\Omega G(u),
\]

The functional $J_\lambda$ is continuous on
$H^1(\Omega)$, but in general it is neither
of class $C^1$ nor locally Lipschitz.
Indeed, for every $u\in H^1(\Omega)$,
the functional $J_\lambda$ is G\^ateaux
differentiable along every direction
$v\in H^1(\Omega)\cap L^\infty(\Omega)$,
with
\[
\begin{aligned}
\langle J_\lambda'(u),v\rangle
={}&
\int_\Omega\sum_{i,j=1}^N
a_{ij}(x,u)D_{x_i}uD_{x_j}v+\frac12\int_\Omega
\sum_{i,j=1}^N
D_s a_{ij}(x,u)D_{x_i}uD_{x_j}u\,v+\lambda\int_\Omega uv
-\int_\Omega g(u)v.
\end{aligned}
\]
The difficulty arises from the term
\[
\sum_{i,j=1}^N
D_s a_{ij}(x,u)D_{x_i}uD_{x_j}u,
\]
which generally belongs only to $L^1(\Omega)$
and need not belong to $(H^1(\Omega))'$.
Moreover, for every fixed
$v\in H^1(\Omega)\cap L^\infty(\Omega)$,
the quantity $\langle J_\lambda'(u),v\rangle$
depends continuously on $u$
(see \cite{CaninoMauroNeumann2026}).

Set
\[
L_\lambda=\inf\left\{J_\lambda(u)\ :\ \text{$u\not\equiv0$ is a weak solution of \eqref{P}}\right\}.
\]
A least energy solution of \eqref{P} is a minimizer of the level $L_\lambda$.

In the semilinear case, it is well known that least energy solutions can be obtained by minimizing the functional on the Nehari manifold
\[
\mathcal{N}_\lambda := \left\{ u \in H^1(\Omega)\setminus\{0\} \ :\ \langle J_\lambda'(u),u\rangle = 0 \right\}.
\]
Moreover, under suitable assumptions, this level coincides with the Mountain Pass level associated with $J_\lambda$ (see, e.g., \cite{willem1996minimax}).

In the present quasilinear setting, however, the use of the Nehari
manifold becomes more delicate. To see the difficulty, consider
for simplicity the pure-power nonlinearity
$g(s)=|s|^{p-2}s$. For every $u\in H^1(\Omega)\setminus\{0\}$,
the derivative of the fibering map $t\mapsto J_\lambda(tu)$
satisfies, for $t>0$,
\[
\begin{aligned}
\frac{1}{t}\frac{d}{dt}J_\lambda(tu)
={}&\int_\Omega\sum_{i,j=1}^N
\left[a_{ij}(x,tu)+\frac{tu}{2}D_s a_{ij}(x,tu)\right]
D_{x_i}uD_{x_j}u+\lambda\int_\Omega u^2
-t^{p-2}\int_\Omega|u|^p.
\end{aligned}
\]
Unlike the semilinear case, the contribution of the diffusion
coefficients now depends nontrivially on $t$.
Assumptions \eqref{a.1}--\eqref{a.3} do not ensure the
monotonicity required to obtain a unique positive zero of
this expression, and therefore do not guarantee a unique
critical point or a unique global maximum of the fibering map.

Consequently, the Nehari manifold need not be a natural
constraint, and the usual minimization argument cannot be
applied directly (see also \cite{szulkin2010method}).

{
Our approach is inspired by the variational method of Pankov \cite{pankov2005, szulkin2010method}, which uses a spectral decomposition and a generalized Nehari manifold to study least energy solutions in indefinite problems. In our quasilinear setting, however, the energy functional is not differentiable, and the generalized Nehari method cannot be applied directly. We therefore minimize the energy over the set of nontrivial weak solutions, whose existence follows from \cite{CaninoMauroNeumann2026}. Using compactness arguments, we show that the least energy level is attained. In the indefinite case, the spectral decomposition of the linearized operator and the non-resonance condition play an important role in preventing minimizing sequences from converging to zero.
}

\subsection{State of the art}

\begin{itemize}

\item \emph{The semilinear problem.}
Let $\lambda_1$ denote the first eigenvalue of the Laplacian with Dirichlet boundary conditions. If $-\lambda < \lambda_1$, it is classical that the problem
\begin{equation*}
-\Delta z+\lambda z=|z|^{p-2}z \quad \text{in }\Omega, 
\qquad z=0 \quad \text{on }\partial\Omega,
\end{equation*}
admits a weak solution. This can be proved via variational methods such as the Mountain Pass Theorem or the Nehari manifold approach (see, e.g., \cite[Sections 1.4 and 4.2]{willem1996minimax}). 

When $-\lambda \ge \lambda_1$, the existence of a nontrivial weak solution can still be established, for instance by means of a linking theorem \cite[Section 2.4]{willem1996minimax}, or alternatively through the generalized Nehari method \cite[Chapter 4]{szulkin2010method}. While the linking approach guarantees the existence of a critical point with suitable geometric properties, the generalized Nehari method allows one to characterize directly a solution of minimal energy among all nontrivial solutions. 

We now turn to the corresponding problem with Neumann boundary conditions:
\begin{equation*}
-\Delta z+\lambda z=|z|^{p-2}z \quad \text{in }\Omega, 
\qquad \frac{\partial z}{\partial\nu}=0 \quad \text{on }\partial\Omega.
\end{equation*}
This case exhibits significant differences. In particular, if $\lambda > 0$, constant solutions exist, namely $z \equiv \pm \lambda^{\frac{1}{p-2}}$. In the subcritical regime, the existence of nonconstant solutions for sufficiently large $\lambda > 0$ was established in \cite{lin1988large, lin2006diffusion}. 

On the other hand, if $\lambda \le 0$, constant solutions are no longer admissible, and one can adapt the techniques developed for the Dirichlet problem to obtain the existence of a ground state solution.

\item \emph{The quasilinear problem.}
Consider the Dirichlet problem
\begin{equation}
-\sum_{i,j=1}^N D_{x_j}\big(a_{ij}(x,u)D_{x_i}u\big)
+\frac12 \sum_{i,j=1}^N D_s a_{ij}(x,u) D_{x_i}u D_{x_j}u
= g(x,u)
\quad \text{in }\Omega, 
\qquad u=0 \quad \text{on }\partial\Omega.
\end{equation}

As in the Neumann case, the associated energy
functional is continuous on $H_0^1(\Omega)$,
but in general it is neither of class $C^1$
nor locally Lipschitz.

This class of problems has been extensively studied in 
\cite{caninoquasilineare1, caninoserdica, nonsmooththeory1}, 
where non-smooth critical point theory was employed to prove the existence of \emph{infinitely many bounded solutions}. 

In a related Dirichlet setting, Arcoya and Boccardo~\cite{arcoyaboccardo1} established the existence of a bounded solution by means of \emph{a priori} estimates.

Subsequently, Candela and Palmieri 
\cite{candela2006multiple, candela2009some, candela2009infinitely} 
obtained infinitely many bounded solutions for this equation and for a suitable generalization. Their approach relies on a differentiable critical point framework, achieved by working in the space 
\( W_0^{1,p}(\Omega) \cap L^\infty(\Omega) \), 
where the associated functional becomes smooth.

{
The symmetric multiplicity theorem in \cite{CaninoMauroNeumann2026} provides the existence result used in this paper; our purpose is to establish attainment and qualitative properties of the least energy level.
} However, the existence of a least energy solution for quasilinear problems has been addressed only in the Dirichlet case, see \cite[Section 4]{CaninoMauroCoopCompSystem2025}.

\end{itemize}
\subsection{Main results}
We begin by analyzing the case $\lambda>0$, for which we obtain the existence of least energy solutions together with their asymptotic behavior as $\lambda \to +\infty$.

\begin{theorem}\label{theorem lambda positive}
Assume that \eqref{a.1}-\eqref{a.3}, \eqref{g.1'}-\eqref{g.2} hold.
    The problem \eqref{P}
    has a least energy solution $u_\lambda\in H^1(\Omega)\cap L^\infty(\Omega)$ for every $\lambda>0$. If, in addition,
\[
\frac{{p}}{p-2}>\beta_q,
\qquad
\beta_q:=\frac{(N-2)(2^*-q)}{2(q-2)},
\]
then there exists $\lambda_0>0$ such that $u_\lambda$ is not constant for every $\lambda>\lambda_0$. Furthermore, $L_\lambda=J_\lambda(u_\lambda)\to+\infty$ as $\lambda\to+\infty$.
\end{theorem}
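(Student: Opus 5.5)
We split the proof into three parts: attainment of $L_\lambda$, the divergence $L_\lambda\to+\infty$, and non-constancy for large $\lambda$.

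\textbf{Attainment.} By \cite{CaninoMauroNeumann2026} the set of nontrivial weak solutions is nonempty. For a solution $u$ we test the equation with $u$; this is legitimate because solutions are bounded (this regularity also comes from \cite{CaninoMauroNeumann2026}, via a Moser/Stampacchia iteration). Combining $J_\lambda(u)-\frac1q\langle J'_\lambda(u),u\rangle$ with \eqref{a.3} and \eqref{g.2}, we obtain
\[
J_\lambda(u)\ge\Bigl(\frac12-\frac{2+\gamma}{2q}\Bigr)\int_\Omega\sum a_{ij}(x,u)D_{x_i}uD_{x_j}u+\Bigl(\frac12-\frac1q\Bigr)\lambda\int_\Omega u^2 .
\]
Since $\gamma<q-2$, both coefficients are positive, so $L_\lambda\ge0$ and minimizing sequences are bounded in $H^1$. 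Next we rule out $u_n\to0$. From $\langle J'_\lambda(u_n),u_n\rangle=0$, together with \eqref{a.2}, the nonnegativity in \eqref{a.3}, $\lambda>0$ and \eqref{g.1}, we get
\[
\min(\nu_0,\lambda)\|u_n\|^2\le\varepsilon\|u_n\|_2^2+C_\varepsilon\|u_n\|_p^p ,
\]
hence $\|u_n\|\ge\delta>0$. Up to a subsequence, $u_n\rightharpoonup u$ in $H^1$ and $u_n\to u$ in $L^p$ and a.e. To pass to the limit in the equation we use the standard nonsmooth argument of \cite{caninoquasilineare1,CaninoMauroNeumann2026}. One tests with $v e^{-M u_n}$-type functions, or uses the truncations $T_k(u_n)$, to get strong $H^1$ convergence. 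In this Neumann setting the key point is that the $L^1$-term $D_sa_{ij}D_iu_nD_ju_n\,u_n$ has a sign by \eqref{a.3}, which gives the required equi-integrability. Strong convergence then shows that $u$ is a weak solution with $J_\lambda(u)=\lim J_\lambda(u_n)=L_\lambda$ and $\|u\|\ge\delta$. I expect this step, strong convergence for the quasilinear term, to be the main obstacle. I would first try to replicate the Dirichlet argument of \cite[Section 4]{CaninoMauroCoopCompSystem2025}, checking that no boundary terms appear. The $L^\infty$ bound on $u_\lambda$ is inherited from the regularity of solutions.

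\textbf{$L_\lambda\to+\infty$.} Take a solution $u$ with $\lambda\ge1$. The Nehari identity with \eqref{g.1} (with $\varepsilon=1/2$) and Sobolev give
\[
\nu_0\|\nabla u\|_2^2+\tfrac{\lambda}{2}\|u\|_2^2\le C\|u\|_p^p .
\]
By Gagliardo--Nirenberg, $\|u\|_p^p\le C\|\nabla u\|_2^{\theta p}\|u\|_2^{(1-\theta)p}+C\|u\|_2^p$. Scaling this inequality in $\lambda$ forces $\|\nabla u\|_2^2+\lambda\|u\|_2^2\ge c\lambda^{\kappa}$ for some $\kappa>0$. The lower bound on $J_\lambda$ above then yields $L_\lambda\to\infty$. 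Alternatively and more simply, $\|u\|_p^p\gtrsim\lambda\|u\|_2^2$ combined with Hölder and the Sobolev bound gives the same conclusion.

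\textbf{Non-constancy.} The constant solutions are the $c$ with $g(c)=\lambda c$, and their energy is
\[
J_\lambda(c)=|\Omega|\bigl(\tfrac\lambda2c^2-G(c)\bigr)\ge|\Omega|\bigl(\tfrac12-\tfrac1q\bigr)\lambda c^2 .
\]
From \eqref{g.2}, $G(c)\ge G(1)|c|^q$, and $g(c)c=\lambda c^2$ gives $\lambda c^2\ge qG(1)|c|^q$, so the constants grow like $\lambda^{1/(q-2)}$. Hence the energy of constant solutions is at least $c_1\lambda^{q/(q-2)}$. For the comparison we build a concentrating test profile $w_\lambda(x)=\lambda^{1/(p-2)}\phi(\sqrt\lambda(x-x_0))$ with $x_0\in\partial\Omega$, and show that the least energy level satisfies $L_\lambda\le C\lambda^{\frac{p}{p-2}-\frac N2}$ times a correction coming from the quasilinear coefficients. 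This bound is obtained either through a mountain-pass-type characterization (the minimax level from \cite{CaninoMauroNeumann2026} being $\ge L_\lambda$), or by maximizing $J_\lambda$ over the fibre $t w_\lambda$, using $a_{ij}\le C_0$ and $G(s)\ge c|s|^q$ for $|s|\ge1$. Comparing the exponents of $\lambda$ is exactly where the hypothesis $\frac p{p-2}>\beta_q$ enters: it guarantees that the upper bound on $L_\lambda$ is $o(\lambda^{q/(q-2)})$. For $\lambda>\lambda_0$ the least energy solution therefore cannot be constant. This exponent bookkeeping is the second delicate step, and I would fix it by optimizing the fibre parameter $t$ explicitly.
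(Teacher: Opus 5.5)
Your attainment step is the paper's argument. A minimizing sequence of nontrivial solutions is trivially a (CPS)-sequence, so Theorem~\ref{PS condition} gives strong convergence directly, with no truncation argument to redo. Your Gagliardo--Nirenberg argument for $L_\lambda\to+\infty$ is correct and takes a different route from the paper's proof by contradiction. It is in fact quantitative: it gives $L_\lambda\ge c\lambda^{\frac p{p-2}-\frac N2}$, an exponent that is positive precisely because $p<2^*$.

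The gap is in the non-constancy step. From $\lambda c^2=g(c)c\ge qG(c)\ge qG(1)|c|^q$ (valid for $|c|\ge1$) you get $|c|^{q-2}\le\lambda/(qG(1))$. This bounds the constant solutions from \emph{above}; it does not say they grow like $\lambda^{1/(q-2)}$. The claimed estimate $J_\lambda(c)\ge c_1\lambda^{q/(q-2)}$ is in fact false in general. Take $g(s)=|s|^{p-2}s$, which satisfies \eqref{g.2} for every $q\in(2,p]$. Its nonzero constant solutions are $\pm\lambda^{1/(p-2)}$, with energy $\frac{p-2}{2p}|\Omega|\lambda^{p/(p-2)}$, and this is $o(\lambda^{q/(q-2)})$ whenever $q<p$.

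A lower bound on $|c|$ must come from the growth condition \eqref{g.1}. Since $\lambda=g(c)/c\le1+C|c|^{p-2}$, you get $c^2\ge c'\lambda^{2/(p-2)}$ for $\lambda\ge2$. Hence $J_\lambda(c)\ge|\Omega|\bigl(\frac12-\frac1q\bigr)\lambda c^2\ge c_0\lambda^{p/(p-2)}$.

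Symmetrically, the only lower bound on $G$ available for the competitor is $G(s)\ge a|s|^q-b$. Optimizing $t\mapsto J_\lambda(tw_\lambda)$ for a profile concentrated at scale $\lambda^{-1/2}$ therefore gives $L_\lambda\le c_\lambda^{MP}\le C\lambda^{\beta_q}$, with $\beta_q=\frac q{q-2}-\frac N2$. The bound $C\lambda^{\frac p{p-2}-\frac N2}$ you announce is not available when $q<p$. It is actually false: for $g(s)=|s|^{q-2}s$, your own Gagliardo--Nirenberg argument run with exponent $q$ gives $L_\lambda\ge c\lambda^{\frac q{q-2}-\frac N2}$, which is larger.

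With your pair of exponents the extra hypothesis would be superfluous, because $q\le p$ forces $\frac p{p-2}-\frac N2<\frac q{q-2}$. That is the symptom of the reversed inequality. With the correct bounds, requiring $C_1\lambda^{\beta_q}<c_0\lambda^{p/(p-2)}$ for large $\lambda$ is exactly the assumption $\frac p{p-2}>\beta_q$. This comparison is how the paper concludes.
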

As a direct consequence, we deduce the corresponding result for the Lane--Emden type problem, where the nonlinearity is of pure power type.
{
Whenever we consider the power nonlinearity
$g(s)=|s|^{p-2}s$, the structural assumptions
\eqref{a.1}-\eqref{a.3} are understood with $q=p$;
in particular, $0<\gamma<p-2$.
}
\begin{corollary}\label{corollario lambda positivo}
    Assume that \eqref{a.1}-\eqref{a.3} hold.
    The problem:
    \begin{equation}\label{lane emden lambda problem}
\left\{
\begin{aligned}
-&\sum_{i,j=1}^ND_{x_j}(a_{ij}(x,u)D_{x_i}u)+\frac12\sum_{i,j=1}^ND_sa_{ij}(x,u)D_{x_i}uD_{x_j}u=|u|^{p-2}u-\lambda u\quad \text{in $\Omega$},\\
&\sum_{i,j=1}^Na_{ij}(x,u)D_{x_i}u\ \nu_j=0\quad\text{on $\partial\Omega$},
\end{aligned}
\right.
\end{equation}
    has a least energy solution $u_\lambda\in H^1(\Omega)\cap L^\infty(\Omega)$ for every $\lambda>0$, and there exists $\lambda_0>0$ such that $u_\lambda$ is not constant whenever $\lambda>\lambda_0$. Furthermore, $L_\lambda=J_\lambda(u_\lambda)\to+\infty$ as $\lambda\to+\infty$.
\end{corollary}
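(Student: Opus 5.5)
The corollary follows from Theorem \ref{theorem lambda positive} applied with $g(s)=|s|^{p-2}s$. The plan has two steps: first verify the hypotheses on $g$, then check the extra inequality $\frac{p}{p-2}>\beta_q$ with $q=p$.

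\textbf{Hypotheses on $g$.} The function $g(s)=|s|^{p-2}s$ is continuous and odd. It satisfies $|g(s)|=|s|^{p-1}\le C_0(1+|s|^{p-1})$ for any $C_0\ge 1$; we enlarge the constant in \eqref{a.1} if necessary, which is harmless. Since $p>2$, we have $g(s)/s=|s|^{p-2}\to0$ as $s\to0$, so \eqref{g.1'} holds. Next, $G(s)=|s|^p/p$, so $g(s)s=|s|^p=pG(s)>0$ for $s\neq0$. This is \eqref{g.2} with $q=p$, which is the convention adopted for the power case: \eqref{a.3} holds with some $\gamma\in(0,p-2)$. The symmetry condition \eqref{symmetry} on $g$ holds by oddness; the symmetry of $a_{ij}$ is a standing assumption throughout the paper. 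Theorem \ref{theorem lambda positive} therefore gives, for every $\lambda>0$, a least energy solution $u_\lambda\in H^1(\Omega)\cap L^\infty(\Omega)$, together with $L_\lambda\to+\infty$.

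\textbf{The inequality $\frac{p}{p-2}>\beta_p$.} Write $2^*=\frac{2N}{N-2}$ for $N\ge3$, so that $(N-2)(2^*-p)=2N-(N-2)p$. Then
\[
\beta_p=\frac{2N-(N-2)p}{2(p-2)},
\]
and the required inequality $\frac{p}{p-2}>\beta_p$ is equivalent to $2p>2N-(N-2)p$, that is, $Np>2N$. This holds because $p>2$. For $N=1,2$ the exponent $2^*$ is infinite, and the paper states that the results remain valid there with the restriction $p>2$. The nonconstancy assertion for $\lambda>\lambda_0$ then follows directly from the theorem.

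There is no real obstacle. The only points needing care are the identification $q=p$, which makes \eqref{g.2} hold with equality, and the admissibility of the constant $C_0$ in \eqref{g.1'}.
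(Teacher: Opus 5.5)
Your proposal is correct and follows the paper's proof. Both apply Theorem~\ref{theorem lambda positive} with $g(s)=|s|^{p-2}s$, check \eqref{g.1'}--\eqref{g.2} with $q=p$, and reduce $\frac{p}{p-2}>\beta_p$ to $p>2$; the paper also computes $J_\lambda(\pm\lambda^{1/(p-2)})=\frac{p-2}{2p}|\Omega|\lambda^{p/(p-2)}$ to get the explicit threshold in \eqref{lambda_0}, which the statement does not require.
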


The divergence of the least energy level $L_\lambda\to+\infty$ as
$\lambda\to+\infty$ also raises the question of the asymptotic
behavior of least energy solutions.

Blow-up techniques have played a fundamental role
in the study of semilinear elliptic equations,
with classical contributions such as that of
Gidas and Spruck \cite{gidasspruck1981global},
and an extensive subsequent literature.

For Neumann problems, Ni and Takagi
\cite{ni1991shape,ni1993peaks} studied the concentration of least
energy solutions near the boundary, while related concentration
phenomena have been investigated in
\cite{pistoiaRamos2004,pellacciPistoiaVairaVerzini2021,
pistoiaSaldanaTavares2023,pistoiaSchiera2025}.

Moreover, in the power case, every nontrivial solution satisfies
$\|u_\lambda\|_{L^\infty(\Omega)}\ge
\lambda^{1/(p-2)}$ (see Remark~\ref{remark Linf growth}),
so its $L^\infty$-norm diverges as $\lambda\to+\infty$.
In the quasilinear setting, however, the analysis of possible
concentration phenomena is more delicate, since rescaling the
solutions also changes the diffusion coefficients.

{

We next consider the opposite regime $\lambda\to0^+$. In this case we prove that, under an additional regularity assumption on $g$, least energy solutions are necessarily constant for $\lambda>0$ sufficiently small.

\begin{proposition}\label{small lambda general}
Assume that \eqref{a.1}-\eqref{a.3}, \eqref{g.1'}-\eqref{g.2} hold and let $g\in C^1(\mathbb R)$. There exists $\widetilde\lambda_0>0$ such that, for every $0<\lambda<\widetilde\lambda_0$, every least energy solution of \eqref{P} is constant. In particular, if $u_\lambda\equiv c_\lambda\neq0$, then
\(
g(c_\lambda)=\lambda c_\lambda
\)
and
\[
L_\lambda=J_\lambda(c_\lambda)=|\Omega|\left(\frac{\lambda}{2}c_\lambda^2-G(c_\lambda)\right).
\]
\end{proposition}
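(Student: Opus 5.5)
The plan is to argue by contradiction through a compactness-and-linearization argument. We compare nonconstant least energy solutions with the constant ones as $\lambda\to0^+$.

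Suppose there is a sequence $\lambda_n\to0^+$ and nonconstant least energy solutions $u_n$ of $(\mathcal P_{\lambda_n})$. Because $L_{\lambda_n}\le J_{\lambda_n}(c_{\lambda_n})$, where $c_\lambda$ is the positive constant solution of $g(c)=\lambda c$, we first need the behaviour of $c_\lambda$. By \eqref{g.1'}, $g(s)/s\to0$ as $s\to0$. By \eqref{g.2}, $G(s)\ge G(1)|s|^q$ for $|s|\ge1$, so $g(s)/s\to\infty$. Hence $c_\lambda\to0$ can be chosen, and
\[
J_\lambda(c_\lambda)=|\Omega|\Bigl(\tfrac{\lambda}{2}c_\lambda^2-G(c_\lambda)\Bigr)\le |\Omega|\Bigl(\tfrac12-\tfrac1q\Bigr)\lambda c_\lambda^2=o(\lambda).
\]
We use that the equality $J(u)=J(u)-\frac1q\langle J'(u),u\rangle$ holds at solutions; testing with $u$ is allowed since $u\in L^\infty$ by the regularity results. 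Combined with \eqref{a.3} and \eqref{g.2}, this gives
\[
\Bigl(\tfrac12-\tfrac{1+\gamma/2}{q}\Bigr)\nu_0\int|\nabla u_n|^2+\Bigl(\tfrac12-\tfrac1q\Bigr)\lambda_n\int u_n^2\le L_{\lambda_n}.
\]
So $\int|\nabla u_n|^2\to0$ and $\lambda_n\int u_n^2\to0$, at rate $o(\lambda_n)$.

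Next we show $u_n\to0$ uniformly. Write $u_n=m_n+w_n$ with $m_n$ the mean, so that $w_n\to0$ in $H^1$ by Poincaré–Wirtinger. Testing the equation with $v=1$ gives $\int g(u_n)=\lambda_n\int u_n$. Combining this with the uniform $L^\infty$ bounds (a Moser/De Giorgi iteration as in \cite{CaninoMauroNeumann2026}, uniform in $\lambda\in(0,1)$ thanks to the energy bound), we want $\|u_n\|_\infty\to0$. I expect this to be the main obstacle: the $L^\infty$ estimate must be uniform along the sequence. I would first try to prove an $H^1$-smallness-implies-$L^\infty$-smallness estimate using the subcritical growth of $g$ together with a Stampacchia truncation $(u-k)^+$, which the quasilinear term tolerates by \eqref{a.3} and the sign conditions.

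Then we linearize around the constant. Set $w_n=u_n-m_n\not\equiv0$ and test the equation with $w_n$. The left side is at least
\[
\nu_0\int|\nabla w_n|^2-\tfrac{C_0}{2}\|w_n\|_\infty\int|\nabla u_n|^2\ge\Bigl(\nu_0-o(1)\Bigr)\int|\nabla w_n|^2,
\]
using $D_s a_{ij}$ bounded and $\|u_n\|_\infty\to0$. The right side is $\int(g(u_n)-g(m_n))w_n-\lambda_n\int w_n^2$, since $\int w_n=0$. By $g\in C^1$ and $g'(0)=0$, it is at most $\sup_{|s|\le\|u_n\|_\infty}|g'(s)|\int w_n^2=o(1)\int w_n^2$. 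Poincaré–Wirtinger, $\int w_n^2\le\mu_2^{-1}\int|\nabla w_n|^2$, then yields $(\nu_0-o(1))\int|\nabla w_n|^2\le o(1)\int|\nabla w_n|^2$. This forces $w_n\equiv0$ for large $n$, a contradiction. This is where $g\in C^1$ is used.

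Finally, the formula for $L_\lambda$ follows by plugging the constant $c_\lambda$ into the equation, which gives $g(c_\lambda)=\lambda c_\lambda$, and into $J_\lambda$.
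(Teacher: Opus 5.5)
Your proposal follows the paper's proof. Using the constants $c_{\lambda_n}\to0$ as competitors gives $L_{\lambda_n}=o(\lambda_n)$ and hence $u_n\to0$ in $H^1(\Omega)$ (Lemma~\ref{lem:small-ground-state}). A $\lambda$-uniform iteration then gives $u_n\to0$ in $L^\infty(\Omega)$ (Lemma~\ref{lem:uniform-Moser}, Corollary~\ref{lem:H1-Linfty-smallness}). Finally, testing with $u-\frac{1}{|\Omega|}\int_\Omega u$, together with $g'(0)=0$, $|D_sa_{ij}|\le C_0$ and Poincar\'e--Wirtinger, shows that solutions with small $L^\infty$-norm are constant (Lemma~\ref{lem:small-solution-rigidity}).

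The step you left open goes through as you expect; the paper does it by Moser iteration with test functions $u|u|^{2s}$. The quasilinear term and the $\lambda$-term are nonnegative by \textup{(a.3)} and $\lambda>0$. Moreover $|g(u)u|\le(1+C|u|^{p-2})u^2$, and $1+|u|^{p-2}$ is bounded in $L^{2^*/(p-2)}$, with $2^*/(p-2)>N/2$, by $\|u\|_{H^1(\Omega)}$ alone. The resulting bound $\|u\|_{L^\infty(\Omega)}\le C(M)\|u\|_{L^2(\Omega)}$ is linear in $u$, and that linearity is what turns $H^1$-smallness into $L^\infty$-smallness. Your Stampacchia variant with $(u-k)^+$, $k\ge0$, also works, since $\sum_{i,j}D_sa_{ij}(x,u)\xi_i\xi_j\ge0$ where $u>0$.

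There are two slips, neither of which affects the argument:
\begin{enumerate}
\item \textup{(g.2)} gives $G(c_\lambda)\le\lambda c_\lambda^2/q$, so your inequality for $J_\lambda(c_\lambda)$ is reversed. Use $G>0$ instead to get $J_\lambda(c_\lambda)\le\frac{|\Omega|}{2}\lambda c_\lambda^2$, which is still $o(\lambda)$.
\item Testing with $v=1$ leaves the term $\frac12\int_\Omega\sum_{i,j}D_sa_{ij}(x,u_n)D_{x_i}u_nD_{x_j}u_n$, so $\int_\Omega g(u_n)=\lambda_n\int_\Omega u_n$ is false in general. You do not need it, because $\int_\Omega u_n^2=o(1)$ already controls the mean.
\end{enumerate}
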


For the Lane--Emden nonlinearity, the constant solutions can be explicitly determined, and Proposition~\ref{small lambda general} gives the following result.

\begin{corollary}\label{small lambda power}
Assume that \eqref{a.1}-\eqref{a.3} hold. There exists $\widetilde\lambda_0>0$ such that, for every $0<\lambda<\widetilde\lambda_0$, every least energy solution of \eqref{lane emden lambda problem} is constant. More precisely,
\(
u_\lambda\equiv\pm\lambda^{\frac1{p-2}},
\)
and
\(
L_\lambda=\frac{p-2}{2p}|\Omega|\lambda^{\frac{p}{p-2}}.
\)
\end{corollary}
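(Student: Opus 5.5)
We will deduce Corollary~\ref{small lambda power} from Proposition~\ref{small lambda general} applied to $g(s)=|s|^{p-2}s$. The plan has three steps: check the hypotheses of the Proposition for this $g$, identify the constant solutions, and compute their energy.

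First we verify the hypotheses on $g$. Since $p>2$, $g(s)=|s|^{p-2}s$ belongs to $C^1(\mathbb R)$, with $g'(s)=(p-1)|s|^{p-2}$. Condition \eqref{g.1'} holds with $C_0\ge1$, because $|g(s)|=|s|^{p-1}$ and $g(s)/s=|s|^{p-2}\to0$ as $s\to0$. Next, $G(s)=|s|^p/p$, so $g(s)s=|s|^p=pG(s)>0$ for $s\ne0$; this is \eqref{g.2} with $q=p$. This matches the convention stated before Corollary~\ref{corollario lambda positivo}, under which \eqref{a.1}--\eqref{a.3} are read with $q=p$ and $0<\gamma<p-2$. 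Finally, $g$ is odd, which gives the symmetry assumption \eqref{symmetry} (the evenness of $a_{ij}$ in $s$ is part of the standing assumptions). Proposition~\ref{small lambda general} therefore provides $\widetilde\lambda_0>0$ such that, for $0<\lambda<\widetilde\lambda_0$, every least energy solution is a constant $c_\lambda\neq0$ with $g(c_\lambda)=\lambda c_\lambda$.

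Next we solve for the constant. The relation $g(c_\lambda)=\lambda c_\lambda$ reads $|c_\lambda|^{p-2}c_\lambda=\lambda c_\lambda$. Since $c_\lambda\ne0$, this gives $|c_\lambda|^{p-2}=\lambda$, that is $c_\lambda=\pm\lambda^{1/(p-2)}$. Conversely, any such constant is a weak solution, because the gradient terms vanish for constants. This is consistent with the statement of the Proposition, which includes the nonzero-ness of $c_\lambda$: least energy solutions are nontrivial by definition of $L_\lambda$.

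Finally we compute the level. Using $c_\lambda^2=\lambda^{2/(p-2)}$ and $|c_\lambda|^p=\lambda^{p/(p-2)}$,
\[
L_\lambda=|\Omega|\left(\frac{\lambda}{2}\lambda^{\frac{2}{p-2}}-\frac1p\lambda^{\frac{p}{p-2}}\right)=|\Omega|\left(\frac12-\frac1p\right)\lambda^{\frac{p}{p-2}}=\frac{p-2}{2p}|\Omega|\lambda^{\frac{p}{p-2}}.
\]
We expect no real obstacle here, since all the analytic work is in Proposition~\ref{small lambda general}. The only point needing care is the bookkeeping that the structural constant $q$ in \eqref{a.3} equals $p$, so that the Proposition applies with the stated range of $\gamma$.
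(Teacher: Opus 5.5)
Your proposal is correct and follows the paper's proof essentially step for step: apply Proposition~\ref{small lambda general} with $g(s)=|s|^{p-2}s$, which is $C^1$ and satisfies \eqref{g.1'}--\eqref{g.2} with $q=p$; then solve $|c|^{p-2}c=\lambda c$ to get $c=\pm\lambda^{1/(p-2)}$, and evaluate $J_\lambda$ at this constant to obtain $\frac{p-2}{2p}|\Omega|\lambda^{p/(p-2)}$. Your extra bookkeeping (oddness of $g$, and reading \eqref{a.3} with $q=p$ so that $0<\gamma<p-2$) matches the paper's standing conventions.
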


The behavior described above is closely related to the
classical semilinear theory developed by Lin, Ni and Takagi
\cite{lin1988large,lin2006diffusion}.
For the pure-power semilinear equation, the change of
variables $u=\lambda^{1/(p-2)}v$ transforms
$-\Delta u+\lambda u=|u|^{p-2}u$ into
\(
-\frac{1}{\lambda}\Delta v+v=|v|^{p-2}v,
\)
with the same homogeneous Neumann boundary condition.

Thus, large values of $\lambda$ correspond to the
small-diffusion regime, while $\lambda\to0^+$ corresponds
to the large-diffusion regime.

In the quasilinear case, the same rescaling changes the
diffusion coefficients as well. For instance, when
$g(s)=|s|^{p-2}s$, the coefficients in the rescaled problem
become $a_{ij}(x,\lambda^{1/(p-2)}v)$, and hence depend
explicitly on $\lambda$; the semilinear rescaling argument
cannot therefore be applied unchanged.

Nevertheless,
Theorem~\ref{theorem lambda positive} and
Proposition~\ref{small lambda general} extend two qualitative
features of the semilinear theory to the quasilinear setting:
least energy solutions are nonconstant for sufficiently
large $\lambda$ (under the additional assumptions of the
theorem), whereas all least energy solutions are constant
for sufficiently small positive $\lambda$ (under the
regularity assumption of the proposition).
}

The proof of Proposition~\ref{small lambda general} is based on a smallness argument. First, using a constant solution $c_\lambda$ satisfying $g(c_\lambda)=\lambda c_\lambda$ as a competitor and condition \eqref{g.2}, we show that least energy solutions converge to zero in $H^1(\Omega)$ as $\lambda\to0^+$. A Moser iteration, uniform with respect to $\lambda$, then yields convergence to zero in $L^\infty(\Omega)$. Finally, once the $L^\infty$-smallness of least energy solutions has been established, we prove that every bounded solution with sufficiently small $L^\infty$-norm is constant.


{
\begin{remark}\label{remark Linf growth}
For $\lambda>0$, every nontrivial weak solution $u\in H^1(\Omega)\cap L^\infty(\Omega)$ of the problem \eqref{lane emden lambda problem} satisfies
\[
\|u\|_{L^\infty(\Omega)}\ge \lambda^{1/(p-2)}.
\]

Testing the equation with $u$ and using
\textup{(a.2)}--\textup{(a.3)}, we obtain
\[
\nu_0\int_\Omega |\nabla u|^2
+\lambda\int_\Omega u^2
\le \int_\Omega |u|^p
\le \|u\|_{L^\infty(\Omega)}^{p-2}\int_\Omega u^2.
\]
Since $u\not\equiv0$, division by $\int_\Omega u^2$
yields the claimed estimate.
\end{remark}
}

{
We then turn to the case $\lambda\le0$. In this regime,
nontrivial weak solutions are necessarily nonconstant.
We prove that the least energy level is attained in the
non-resonant case, namely when $0$ is not an eigenvalue
of the linearized Neumann operator
\[
\mathcal L_\lambda u
=
-\sum_{i,j=1}^N
D_{x_j}\bigl(a_{ij}(x,0)D_{x_i}u\bigr)+\lambda u.
\] }

\begin{theorem}\label{theorem lambda<0 non resonant case}
   Assume that \eqref{a.1}-\eqref{a.3}, \eqref{g.1'}-\eqref{g.2} and \eqref{symmetry} hold.  For every $\lambda\le0$ the problem \eqref{P}
has infinitely many distinct pairs of nonconstant weak
solutions in $H^1(\Omega)\cap L^\infty(\Omega)$. 
    
   Furthermore, if 
    \begin{equation}\label{non-resonant hypothesis}
        \text{$0$ is not an eigenvalue of}\quad \mathcal L_{\lambda} (u)=-\sum_{i,j=1}^ND_{x_j}(a_{ij}(x,0)D_{x_i} u)+\lambda u,
    \end{equation}

   then there exists a nonconstant least energy solution of \eqref{P}.

\end{theorem}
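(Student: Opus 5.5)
The theorem has two parts. The first is existence of infinitely many pairs of solutions for every $\lambda\le 0$; for this I will invoke the symmetric multiplicity theorem of \cite{CaninoMauroNeumann2026}. By \eqref{symmetry} the functional $J_\lambda$ is even.

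\emph{Geometry.} Let $\mu_1\le\mu_2\le\dots$ be the Neumann eigenvalues of $\mathcal L_0$, with eigenfunctions $e_k$ orthonormal in $L^2$. Take $Y_k=\mathrm{span}\{e_1,\dots,e_k\}$ and its $H^1$-orthogonal complement $Z_k$ with respect to the form $\int a_{ij}(x,0)D_iuD_ju+\int u^2$.
\begin{itemize}
\item On $Z_k$ with $k$ large, \eqref{g.1} and the usual estimate $\|u\|_{L^p}\le\beta_k\|u\|$ with $\beta_k\to0$ give $J_\lambda\ge\rho_k>0$ on a sphere, and $\rho_k\to+\infty$. Here I use $a_{ij}(x,s)\xi_i\xi_j\ge\nu_0|\xi|^2$, and that $\lambda u^2$ is controlled once $\mu_{k+1}>|\lambda|$.
\item On finite dimensional subspaces, \eqref{g.2} gives $G(s)\ge c|s|^q-C$, while \eqref{a.3} gives $a_{ij}(x,ts)\le a_{ij}(x,s)\,t^\gamma$ for $t\ge1$. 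Since $\gamma<q-2$, this yields $J_\lambda\to-\infty$.
\end{itemize}

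\emph{Compactness.} The Palais--Smale condition in the nonsmooth sense (Cerami-type, tested on $H^1\cap L^\infty$) follows from \eqref{a.3} and \eqref{g.2}. Testing with $u$ gives
$J_\lambda(u)-\frac1q\langle J'_\lambda(u),u\rangle\ge(\frac12-\frac{2+\gamma}{2q})\nu_0\|\nabla u\|^2+\lambda(\frac12-\frac1q)\|u\|_2^2$.
The $\lambda\le0$ term is the nuisance. I absorb it using $\int G(u)\ge c\int|u|^q-C$ and $\|u\|_2^2\le\varepsilon\|u\|_q^q+C_\varepsilon$, which gives boundedness. This yields infinitely many critical values and, by $L^\infty$ regularity from \cite{CaninoMauroNeumann2026}, bounded solutions.

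\emph{Nonconstancy.} Nonconstancy is immediate: a constant $c$ would satisfy $g(c)=\lambda c$, so $g(c)c=\lambda c^2\le0$, which contradicts \eqref{g.2} unless $c=0$.

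For the least energy solution, take $L_\lambda$ and a minimizing sequence $u_n$ of nontrivial solutions. Testing with $u_n$ shows $J_\lambda(u_n)=J_\lambda(u_n)-\frac1q\langle J'_\lambda(u_n),u_n\rangle$. The previous estimate then shows $u_n$ is bounded in $H^1$ and $L_\lambda>-\infty$; in fact $L_\lambda\ge0$, since the terms are nonnegative modulo the $\lambda$ term handled as above. A subsequence converges weakly to some $u$. The compactness machinery of \cite{CaninoMauroNeumann2026} (strong convergence of bounded sequences of solutions, via test functions $T_k(u_n-u)$ and the sign condition \eqref{a.3}) gives $u_n\to u$ in $H^1$. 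Then $u$ is a solution with $J_\lambda(u)=L_\lambda$.

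\emph{Main obstacle: $u\not\equiv0$.} This is where \eqref{non-resonant hypothesis} enters. Suppose $u_n\to0$ in $H^1$. Set $w_n=u_n/\|u_n\|$ and divide the equation by $\|u_n\|$. By \eqref{g.1'}, $g(u_n)/\|u_n\|\to0$ in the dual sense. Testing with $v\in H^1\cap L^\infty$ and using a uniform $L^\infty$ bound for $u_n$ (from Moser iteration, uniform since $\|u_n\|\to0$) gives $a_{ij}(x,u_n)\to a_{ij}(x,0)$. The quadratic term satisfies $\frac{1}{\|u_n\|}\int D_sa_{ij}\,D_iu_nD_ju_n\,v\le C\|u_n\|\|v\|_\infty\to0$. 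Hence the weak limit $w$ solves $\mathcal L_\lambda w=0$, and so $w=0$ by non-resonance.

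It remains to contradict $w=0$, using strong convergence of $w_n$. Testing the rescaled equation with $w_n$:
\begin{itemize}
\item it gives $\int a_{ij}(x,u_n)D_iw_nD_jw_n\le -\lambda\int w_n^2+o(1)$, since the \eqref{a.3} term is nonnegative;
\item $\int w_n^2\to0$ by compact embedding;
\item therefore $\|\nabla w_n\|\to0$, contradicting $\|w_n\|=1$.
\end{itemize}
The delicate point is justifying $L^\infty$ smallness, or replacing it with truncated test functions, so that $u_n$ may be used as a test function. I would try first to test with $u_n$ directly, which is allowed since solutions are bounded.
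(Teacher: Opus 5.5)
Your proposal is correct and follows the paper's proof. Multiplicity and nonconstancy come from the symmetric theorem of \cite{CaninoMauroNeumann2026}, the minimizer comes from the $(CPS)$ condition, and $u\not\equiv0$ is proved as in the paper: normalize $z_n=u_n/\|u_n\|$, use \eqref{non-resonant hypothesis} to get $z=0$, then test with $z_n$, where the \eqref{a.3}-term is nonnegative and $\lambda\le0$. A few details need adjusting:
\begin{itemize}
\item The uniform Moser bound is superfluous: a.e.\ convergence $u_n\to0$ and dominated convergence already give $a_{ij}(x,u_n)D_{x_j}\varphi\to a_{ij}(x,0)D_{x_j}\varphi$ in $L^2(\Omega)$.
\item With the multiplier $1/q$, the nonlinear terms only leave the nonnegative quantity $\int_\Omega(\frac1q g(u)u-G(u))$, which gives no $|u|^q$-control. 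To absorb the negative $\lambda$-term, use a multiplier $\theta\in(1/q,1/(2+\gamma))$; this leaves positive multiples of both $\int_\Omega|\nabla u|^2$ and $\int_\Omega G(u)$ (compare Lemma~\ref{least energy level finite}).
\item Your aside $L_\lambda\ge0$ is not justified, but it is also never used.
\end{itemize}
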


{
To treat the resonance at $\lambda=0$, we restrict ourselves to the power nonlinearity and consider the corresponding Lane--Emden problem.
} In this setting, we recover the existence of least energy solutions and describe the behavior of the least energy level as $\lambda \to 0^+$ for the perturbed problem \eqref{lane emden lambda problem}.

\begin{theorem}\label{lane emden theorem}
    Assume that \eqref{a.1}-\eqref{a.3}  hold. The problem:
    \begin{equation}\label{lane emden problem}
    \left\{
    \begin{aligned}
-&\sum_{i,j=1}^ND_{x_j}(a_{ij}(x,u)D_{x_i}u)+\frac12\sum_{i,j=1}^ND_sa_{ij}(x,u)D_{x_i}uD_{x_j}u=|u|^{p-2}u\quad \text{in $\Omega$},\\
&\sum_{i,j=1}^Na_{ij}(x,u)D_{x_i}u\ \nu_j=0\quad\text{on $\partial\Omega$},
\end{aligned}
\right.
\end{equation}
has a least energy solution $u_0\in H^1(\Omega)\cap L^\infty(\Omega)$, which is not constant. 

Furthermore, if $L_\lambda$ denotes the least energy level associated with \eqref{lane emden lambda problem} for $\lambda>0$ and with \eqref{lane emden problem} for $\lambda=0$, then $L_\lambda\to0$ as $\lambda\to0^+$, while $L_0>0$. In particular, $\lambda\mapsto L_\lambda$ is not continuous at $\lambda=0$. 

\end{theorem}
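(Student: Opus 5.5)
The argument has three parts: existence of a nontrivial solution, attainment of $L_0$, and the limits $L_\lambda\to0^+$ as $\lambda\to0^+$ together with $L_0>0$.

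\textbf{Existence and attainment.} For $\lambda=0$ and $g(s)=|s|^{p-2}s$, the constants cannot be solutions, since $|c|^{p-2}c=0$ forces $c=0$. Hence every nontrivial solution is nonconstant. Nontrivial bounded solutions exist by the symmetric multiplicity result of \cite{CaninoMauroNeumann2026}, used as in Theorem~\ref{theorem lambda<0 non resonant case}. Note that the resonance only affects the least energy argument, not existence. For any solution $u$, test the equation with $u$; this is legitimate by the usual truncation argument, since $u\in L^\infty$. Using \eqref{a.3} with $q=p$, this gives
\[
J_0(u)\ \ge\ \Bigl(\tfrac12-\tfrac{1}{2+\gamma}\cdot\ldots\Bigr)\int_\Omega \sum a_{ij}D_iuD_ju+\Bigl(\tfrac12-\tfrac1p\Bigr)\ldots ,
\]
or, more concretely, $J_0(u)=J_0(u)-\frac1p\langle J_0'(u),u\rangle\ge c_{p,\gamma}\int_\Omega|\nabla u|^2$ with $c_{p,\gamma}=\frac12-\frac{1+\gamma/2}{p}>0$, because $\gamma<p-2$. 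In particular $L_0\ge0$.

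Take a minimizing sequence $u_n$. It is then bounded in $\dot H^1$. We also have $\int|u_n|^p=\langle\text{quadratic part}\rangle\le C\int|\nabla u_n|^2$. Writing $u_n=\bar u_n+w_n$ with mean $\bar u_n$, the Poincaré–Wirtinger inequality gives $|\bar u_n|\,|\Omega|^{1/p}\le\|u_n\|_p\le \|w_n\|_p+\ldots$, so $\bar u_n$ is bounded as well. Thus $u_n$ is bounded in $H^1$. Passing to the limit in the weak formulation with test functions in $H^1\cap L^\infty$ follows the compactness argument of \cite{CaninoMauroNeumann2026}: a.e. convergence of gradients from the quasilinear structure, then strong $H^1$ convergence using the $p$-subcritical growth. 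This yields a solution $u_0$ with $J_0(u_0)=\lim J_0(u_n)$.

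\textbf{The main obstacle: $u_0\ne0$ and $L_0>0$.} We need $\|\nabla u_n\|_2$ bounded below for nontrivial solutions. From the tested identity,
\[
\nu_0\|\nabla u\|_2^2\le\int_\Omega|u|^p .
\]
Integrating the equation against $v\equiv1$ gives
\[
\tfrac12\int_\Omega D_sa_{ij}D_iuD_ju=\int_\Omega|u|^{p-2}u .
\]
This controls the mean through the gradient. Write $u=\bar u+w$. If $\|\nabla u\|_2$ were small, then $|\int|u|^{p-2}u|\le C\|\nabla u\|_2^2$. Since $s\mapsto |s|^{p-2}s$ is increasing and $w$ is small in $L^p$, this forces $|\bar u|^{p-1}\lesssim \|\nabla u\|_2^2+\|w\|_p(|\bar u|+\|w\|_p)^{p-2}$. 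Then $\int|u|^p\lesssim |\bar u|^p+\|\nabla u\|_2^p\lesssim \|\nabla u\|_2^{2p/(p-1)}+\|\nabla u\|_2^p$, and both exponents exceed $2$. Combined with $\nu_0\|\nabla u\|_2^2\le\int|u|^p$, this gives $\|\nabla u\|_2\ge\delta>0$. Hence $L_0\ge c_{p,\gamma}\delta^2>0$, and the strong limit $u_0$ is nontrivial. I expect the bookkeeping of the mean estimate to be the delicate step.

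\textbf{The limit $\lambda\to0^+$.} By Corollary~\ref{small lambda power}, for small $\lambda$ we have $L_\lambda=\frac{p-2}{2p}|\Omega|\lambda^{p/(p-2)}\to0$. Alternatively, without using that corollary, the constant solution $\pm\lambda^{1/(p-2)}$ is a competitor, which gives $0\le L_\lambda\le J_\lambda(\lambda^{1/(p-2)})$, and the right side tends to $0$. Together with $L_0>0$, this shows that $\lambda\mapsto L_\lambda$ is discontinuous at $0$.
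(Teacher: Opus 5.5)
Your proof is correct, and the part $L_\lambda\to0$ as $\lambda\to0^+$ is the paper's argument: the constant competitor $\pm\lambda^{1/(p-2)}$ gives the upper bound, and $L_\lambda\ge0$ follows from $J_\lambda(u)-\frac1p\langle J_\lambda'(u),u\rangle\ge0$. Where you genuinely differ is in excluding $u_0=0$ and proving $L_0>0$.

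The paper argues by contradiction along the minimizing sequence. If $\|u_n\|\to0$, then $z_n=u_n/\|u_n\|$ satisfies $\|\nabla z_n\|_2\to0$, so $z_n\to z_\infty$, a nonzero constant. Testing the equation with $z_\infty$ and dividing by $\|u_n\|^{p-1}$ gives $2\int_\Omega|z_\infty|^p=O(\|u_n\|)$, which is absurd. $L_0>0$ is then obtained a posteriori from the nonconstant minimizer through $J_0(u)\ge\frac{p-2-\gamma}{2p}\int_\Omega\sum_{i,j}a_{ij}(x,u)D_{x_i}uD_{x_j}u$.

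You use the same key identity (testing with $v\equiv1$, so that $\int_\Omega|u|^{p-2}u=O(\|\nabla u\|_2^2)$). You turn it instead into a uniform a priori bound $\|\nabla u\|_2\ge\delta$ for \emph{every} nontrivial solution, with $\delta$ depending only on $N,C_0,\nu_0,p,\Omega$. This gives an explicit bound $L_0\ge\frac{(p-2-\gamma)\nu_0}{2p}\delta^2$ that does not depend on any particular sequence, and nontriviality of the strong limit is then immediate. The paper's blow-up argument is shorter and avoids the bookkeeping.

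To make your step rigorous, the fact you need is not monotonicity of $s\mapsto|s|^{p-2}s$ but the mean value estimate $\bigl||a+b|^{p-2}(a+b)-|a|^{p-2}a\bigr|\le(p-1)|b|(|a|+|b|)^{p-2}$. With H\"older and $\|w\|_p\le C\|\nabla u\|_2$, it gives, for $m=|\bar u|$ and $t=\|\nabla u\|_2$,
$$m^{p-1}\le C(t^2+t\,m^{p-2}+t^{p-1}).$$
The cross term is absorbed by distinguishing $m\le 2Ct$ from $m>2Ct$, which yields $m\le C'(t^{2/(p-1)}+t)$. The conclusion then follows as you wrote, using $t>0$ because nonzero constants are not solutions.

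Three smaller points:
\begin{itemize}
\item Your hand-made $H^1$ bound on the minimizing sequence is unnecessary: a minimizing sequence of solutions is a $(CPS)_{L_0}$-sequence, so Theorem~\ref{PS condition} applies directly.
\item The constant in $J_0(u)\ge c_{p,\gamma}\int_\Omega|\nabla u|^2$ is missing the factor $\nu_0$.
\item You should cite \cite[Theorem 3.4]{CaninoMauroNeumann2026} for $u_0\in L^\infty(\Omega)$.
\end{itemize}
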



We now consider the semilinear case
$a_{ij}(x,s)=a_{ij}(x)$, with the power
nonlinearity $g(s)=|s|^{p-2}s$.
In this setting, the preceding results provide
a more complete description of least energy
solutions, including resonant values of $\lambda$.
In particular, for $\lambda>0$, the nonzero
constant solutions are precisely
$u\equiv\pm\lambda^{1/(p-2)}$.
For sufficiently small positive $\lambda$,
these solutions attain the least energy level,
whereas for sufficiently large $\lambda$
every least energy solution is nonconstant.

The following corollary summarizes these results
and describes the behavior of the least energy
level for all $\lambda\in\mathbb R$.

\begin{corollary}\label{semilinear case}
Assume that \eqref{a.1}-\eqref{a.2} hold and that $a_{ij}(x,s)=a_{ij}(x)$ for every $i,j=1,\dots,N$. The problem
\begin{equation}\label{semilinear problem}
\left\{
\begin{aligned}
-&\sum_{i,j=1}^N D_{x_j}(a_{ij}(x)D_{x_i}u)+\lambda u=|u|^{p-2}u &&\text{in $\Omega$},\\
&\sum_{i,j=1}^N a_{ij}(x)D_{x_i}u\,\nu_j=0 &&\text{on $\partial\Omega$},
\end{aligned}
\right.
\end{equation}
has a least energy solution $u_\lambda\in H^1(\Omega)\cap L^\infty(\Omega)$ for every $\lambda\in\R$. Moreover,

\begin{itemize}
\item There exists $\widetilde\lambda_0>0$ such that, for
      every $0<\lambda<\widetilde\lambda_0$, all least
      energy solutions are constant, namely
      $u_\lambda\equiv\pm\lambda^{1/(p-2)}$, and
      \(
      L_\lambda=\frac{p-2}{2p}|\Omega|\lambda^{p/(p-2)}.
      \)
\item As $\lambda\to0^+$, one has $L_\lambda\to0$,
      whereas $L_0>0$ is attained by a nonconstant least
      energy solution. In particular, $\lambda\mapsto
      L_\lambda$ is discontinuous at $\lambda=0$.
\item As $\lambda\to+\infty$, one has
      $L_\lambda\to+\infty$. Moreover, there exists
      $\lambda_0>0$ (as in
      Corollary~\ref{corollario lambda positivo}) such that
      every least energy solution is nonconstant whenever
      $\lambda>\lambda_0$.
\item For every $\lambda<0$, the level $L_\lambda>0$
      is attained by a nonconstant least energy solution.
\end{itemize}

\end{corollary}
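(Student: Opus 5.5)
The plan is to specialize the earlier results to $a_{ij}(x,s)=a_{ij}(x)$ and $g(s)=|s|^{p-2}s$, and to treat separately the one case they do not cover. First I check the hypotheses. Since $D_sa_{ij}\equiv0$, condition \eqref{a.3} holds trivially for any admissible $\gamma$ (take $q=p$, $\gamma\in(0,p-2)$). The power nonlinearity satisfies \eqref{g.1'}, \eqref{g.2} with $q=p$ and \eqref{symmetry}, and it is $C^1$ because $p>2$. Hence Corollary~\ref{corollario lambda positivo}, Corollary~\ref{small lambda power} and Theorem~\ref{lane emden theorem} apply verbatim. This gives the following.
\begin{itemize}
\item Existence of a least energy solution for $\lambda\ge0$.
\item The first item: constancy for small $\lambda>0$, with $L_\lambda=\frac{p-2}{2p}|\Omega|\lambda^{p/(p-2)}$.
\item The second item: $L_\lambda\to0$ as $\lambda\to0^+$, while $L_0>0$ is attained by a nonconstant solution.
\item The third item: $L_\lambda\to+\infty$, and nonconstancy for $\lambda>\lambda_0$.
\end{itemize}
For the third item, the claim that \emph{every} least energy solution is nonconstant for large $\lambda$ follows from the proof of Theorem~\ref{theorem lambda positive}. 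That proof shows $L_\lambda<J_\lambda(\pm\lambda^{1/(p-2)})=\frac{p-2}{2p}|\Omega|\lambda^{p/(p-2)}$, and in the semilinear power case the constants $\pm\lambda^{1/(p-2)}$ are the only nonzero constant solutions. Therefore no constant function can realize $L_\lambda$.

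The remaining point is $\lambda<0$. Theorem~\ref{theorem lambda<0 non resonant case} gives a nonconstant least energy solution whenever $0$ is not an eigenvalue of $\mathcal L_\lambda=\mathcal L_0+\lambda$, that is, whenever $-\lambda\notin\sigma(\mathcal L_0)$. The Neumann spectrum of $\mathcal L_0$ is a discrete sequence $0=\mu_1<\mu_2\le\dots\to\infty$, so only the countably many resonant values $\lambda=-\mu_k$, $k\ge2$, are not covered. The case $\lambda=0$ is resonant too, but it is already handled by Theorem~\ref{lane emden theorem}. Positivity $L_\lambda>0$ for $\lambda<0$ follows from the Nehari-type identity for any nontrivial solution, $\int a_{ij}D_iuD_ju+\lambda\int u^2=\int|u|^p$. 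This identity gives
\[
J_\lambda(u)=\Big(\tfrac12-\tfrac1p\Big)\int_\Omega|u|^p>0,
\]
so $L_\lambda\ge0$. Strict positivity then comes from the fact that minimizing sequences do not tend to zero, which I establish below.

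To handle resonant $\lambda<0$, I would exploit the semilinear structure directly, since the functional $J_\lambda$ is now $C^1$. I would use Pankov's generalized Nehari manifold as in \cite{szulkin2010method}. Decompose $H^1(\Omega)=E^+\oplus E^0\oplus E^-$ according to the spectrum of $\mathcal L_\lambda$, where $E^0=\ker\mathcal L_\lambda$ is finite dimensional. Then set
\[
\mathcal M=\{u\notin E^0\oplus E^-:\ J_\lambda'(u)u=0,\ J_\lambda'(u)v=0\ \ \forall v\in E^0\oplus E^-\}.
\]
For the pure power, the map $t\mapsto |t|^{p-2}t$ satisfies the strict monotonicity needed in \cite[Chapter 4]{szulkin2010method}, and $G(s)=|s|^p/p$ is superquadratic. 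The known theory therefore yields the following: $\inf_{\mathcal M}J_\lambda>0$, the infimum is attained, the minimizer is a critical point, and every nontrivial solution lies in $\mathcal M$. Together these give that the minimizer is a least energy solution. It is nonconstant because for $\lambda<0$ a constant $c$ would have to satisfy $|c|^{p-2}c=\lambda c$, which forces $c=0$.

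The main obstacle is exactly this resonant case: verifying that the Szulkin–Weth framework handles a nontrivial kernel $E^0$. The key point is that on the finite-dimensional space $E^0$ the quadratic part vanishes, while $\int|u|^p$ is coercive there. The quadratic-plus-power functional is therefore still anti-coercive on $\mathbb{R}^+u\oplus E^0\oplus E^-$, and that is the only place where non-resonance is used in the general case. If I chose not to rely on the reference, I would instead do a direct argument. Take a minimizing sequence of nontrivial solutions $u_n$. The bound $\frac{p-2}{2p}\int|u_n|^p=J_\lambda(u_n)$ gives an $L^p$ bound, and testing with the $E^\pm$ components bounds $u_n$ in $H^1$. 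For a lower bound, each solution satisfies $\|u^+\|^2\lesssim\int|u|^{p-1}|u^+|$; I would combine this with the $L^p$-norm equivalence on $E^0$ to show that $\|u_n\|_{L^p}$ cannot tend to $0$. The limit is then nontrivial by compactness of the embedding $H^1\hookrightarrow L^p$.
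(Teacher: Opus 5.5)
Your treatment of $\lambda\ge0$ is the same as the paper's: Corollaries~\ref{small lambda power} and \ref{corollario lambda positivo} and Theorem~\ref{lane emden theorem}, with \eqref{a.3} trivial. Your overall argument is correct. The difference is in the case $\lambda<0$.

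\textbf{How the paper handles $\lambda<0$.} The paper does not split into resonant and non-resonant values, and it uses neither Theorem~\ref{theorem lambda<0 non resonant case} nor Szulkin--Weth. It takes a minimizing sequence of nontrivial solutions, which converges strongly by Theorem~\ref{PS condition}. It assumes $u_n\to0$, normalizes $z_n=u_n/\|u_n\|$, and splits $H^1(\Omega)=H^+\oplus H^-\oplus H^0$ along the eigenfunctions of $\mathcal L_0+\lambda$.
\begin{itemize}
\item The weak limit $z$ lies in $H^0$.
\item Testing with $z_n^+$ gives $z_n^+\to0$. Since $H^-\oplus H^0$ is finite dimensional, $z_n\to z$ strongly, so $\|z\|=1$.
\item Testing with $z_n^0$, and using that $Q_\lambda$ vanishes on $H^0$ together with orthogonality, gives $\int_\Omega|z_n|^{p-2}z_nz_n^0=0$. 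Hence $\int_\Omega|z|^p=0$, a contradiction.
\end{itemize}
This is short, self-contained, and treats every $\lambda<0$ uniformly.

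\textbf{How your route compares.} For resonant $\lambda$ you import the generalized Nehari manifold theorem of \cite{szulkin2010method}. This is legitimate: the kernel is finite dimensional, $I(u)=\frac1p\int_\Omega|u|^p$ is weakly lower semicontinuous with completely continuous derivative, and the Szulkin--Weth monotonicity inequality holds for the pure power. It also gives you more: the characterization $L_\lambda=\inf_{\mathcal M}J_\lambda$ and $L_\lambda>0$ directly. The cost is verifying the abstract framework in the Neumann setting, with the equivalent norm $\|u^+\|^2=Q_\lambda(u^+)$, $\|u^-\|^2=-Q_\lambda(u^-)$ on the spectral pieces, instead of a few lines.

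\textbf{A caution about your fallback direct argument.} The estimates $\|u_n^\pm\|\lesssim\|u_n\|_{L^p}^{p-1}$ together with norm equivalence on $E^0$ only show that a vanishing sequence of solutions ends up essentially inside $E^0$. They do not by themselves exclude small solutions. The missing ingredient is the $E^0$-component of the equation,
\[
\int_\Omega|u_n|^{p-2}u_nu_n^0=0,
\]
which holds because the quadratic form vanishes on $E^0$. After normalization it passes to the limit as $\int_\Omega|z|^p=0$. That identity is exactly the key step of the paper's proof, so if you write the direct version, make it explicit.
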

\begin{center}
\begin{tikzpicture}
\begin{axis}[
    width=12cm,
    height=7cm,
    axis lines=middle,
    xlabel={$\lambda$},
    ylabel={$L_\lambda$},
    xmin=-0.05, xmax=1.1,
    ymin=-0.05, ymax=1.0,
    xtick={0,1},
    xticklabels={$0$,$\widetilde\lambda_0$},
    ytick=\empty,
    samples=200,
    domain=0.001:1,
    clip=false,
]

\pgfmathsetmacro{\p}{4}
\pgfmathsetmacro{\Lzero}{0.8}

\addplot[very thick, blue] {((\p-2)/(2*\p)) * x^(\p/(\p-2))};

\addplot[mark=*, blue] coordinates {(0,\Lzero)};
\node[left] at (axis cs:0,\Lzero) {$L_0>0$};

\addplot[mark=o, blue, fill=white] coordinates {(0,0)};
\node[below right] at (axis cs:0,0) {$\lim\limits_{\lambda\to0^+}L_\lambda=0$};

\node[above right] at (axis cs:0.45,0.08)
{{$\displaystyle L_\lambda=\frac{p-2}{2p}|\Omega|\lambda^{\frac{p}{p-2}}$}};

\end{axis}
\end{tikzpicture}
\end{center}

\begin{remark}
In the semilinear case $a_{ij}(x)=\delta_{ij}$, the existence of least energy solutions was already established in \cite{szulkin2009ground} by means of the generalized Nehari manifold $($see also \cite{szulkin2010method}$)$.

\end{remark}

\begin{remark}
 If the
diffusion matrix $A=(a_{ij})$ is constant, the affine change
\[
y=A^{-1/2}x,
\qquad
\widetilde\Omega=A^{-1/2}\Omega,
\]
transforms
\[
-\operatorname{div}(A\nabla u)+\lambda u=u^{p-1}
\quad\text{in }\Omega
\]
into the classical Ni--Takagi problem
\[
-\Delta v+\lambda v=v^{p-1}
\quad\text{in }\widetilde\Omega.
\]
Define
\[
H_A(x):=
H_{\partial\widetilde\Omega}(A^{-1/2}x),
\qquad x\in\partial\Omega.
\]
{
Assume that $\partial\Omega$ is smooth. Let $P_\lambda$ be a
global maximum point of a positive least energy solution, and
let $Q_\lambda\in\partial\Omega$ be a nearest boundary point
to $P_\lambda$. The classical peak-location result of
\cite{ni1993peaks} yields
\[
\lim_{\lambda\to+\infty}H_A(Q_\lambda)
=\max_{\partial\Omega}H_A.
\]
Thus, in the constant-coefficient case, the concentration points, after the change of coordinates, approach points of $\partial\widetilde\Omega$ where the mean curvature is maximal.
}
\end{remark}

\medskip
\noindent\textbf{Structure of the paper.}
{
Section~2 recalls the nonsmooth critical point framework and the auxiliary compactness tools used throughout the paper. 
 
{

Section~3 treats the definite case $\lambda>0$, proving the existence of least energy solutions and studying their behavior as $\lambda\to+\infty$. Under the additional assumption $g\in C^1(\mathbb R)$, we also prove the nonexistence of nonconstant least energy solutions for sufficiently small $\lambda>0$.
}

Section~4 addresses the indefinite and non-resonant case $\lambda\le0$, establishing the existence of nontrivial solutions and, under the non-resonance condition, of least energy solutions. Section~5 treats the resonant case for the Lane--Emden nonlinearity, proving the existence of a nonconstant least energy solution at $\lambda=0$ and describing the discontinuity of the least energy level as $\lambda\to0^+$.
}

\section{Preliminaries}
We recall some results about the critical point theory of continuous functionals, developed in \cite{nonsmooththeory1, corvellec1993deformation,degiovanni1994critical}. In this setting, we consider $(X,d)$ a metric space and $f: X \to \R$ a continuous functional.

\begin{definition} \label{definition 1}
Let $(X,d)$ be a metric space and let $f: X \to \mathbb{R}$ be a continuous function. We consider $\sigma \ge 0$ such that there exist $\delta > 0$ and a continuous map $\mathscr{H}: B_{\delta}(u) \times [0, \delta] \to X$ such that
\begin{align*}
d(\mathscr{H}(v, t), v) \le t, \qquad f(\mathscr{H}(v, t)) \le f(v) - \sigma t.
\end{align*}
  We denote by $|df|(u)$ the supremum of all such $\sigma$ and we call it the \emph{weak slope} of $f$ at $u$.
\end{definition}

\begin{theorem}[{\cite[Theorem 1.1.2]{nonsmooththeory1}}]\label{theorem 1}
Let $E$ be a normed space and $X \subset E$ an open subset. Let us fix $u \in X$ and $v \in E$ with $\|v\| = 1$. For each $w \in X$ we define
$$\overline{D}_+f(w)[v] := \limsup_{t \to 0^+} \frac{f(w + tv) - f(w)}{t}.$$
Then $|df|(u) \ge -\limsup_{w \to u} \overline{D}_+f(w)[v]$.
\end{theorem}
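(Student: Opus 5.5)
We prove the estimate $|df|(u)\ge-\limsup_{w\to u}\overline{D}_+f(w)[v]$ by building, from the right-hand side, an explicit straight-line deformation $\mathscr H(w,t)=w+tv$ and checking that it is admissible in Definition~\ref{definition 1}. Set $\ell:=-\limsup_{w\to u}\overline{D}_+f(w)[v]$. If $\ell\le0$ there is nothing to prove, since $|df|(u)\ge0$: the choice $\sigma=0$ with $\mathscr H(w,t)=w$ is always admissible. So assume $\ell>0$, and fix $\sigma$ with $0<\sigma<\ell$ (if $\ell=+\infty$, take $\sigma$ arbitrary). It suffices to show that $\sigma$ is admissible in Definition~\ref{definition 1}, after which we let $\sigma\uparrow\ell$.

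First, by the definition of $\limsup$ there is $r>0$ with $B_{2r}(u)\subset X$ such that $\overline{D}_+f(w)[v]<-\sigma'$ for every $w\in B_{2r}(u)$, where $\sigma<\sigma'<\ell$. Take $\delta\le r$ and define $\mathscr H(w,t)=w+tv$ on $B_\delta(u)\times[0,\delta]$. This map is continuous, takes values in $B_{2r}(u)\subset X$, and satisfies $\|\mathscr H(w,t)-w\|=t\|v\|=t$. It remains to prove
\[
f(w+tv)\le f(w)-\sigma t \qquad \text{for } w\in B_\delta(u),\ t\in[0,\delta].
\]

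For fixed $w$, consider $\varphi(s)=f(w+sv)+\sigma s$, which is continuous on $[0,\delta]$. At every $s\in[0,\delta)$ the point $w+sv$ lies in $B_{2r}(u)$, so
\[
\limsup_{h\to0^+}\frac{\varphi(s+h)-\varphi(s)}{h}<-\sigma'+\sigma<0.
\]
The key step is the elementary monotonicity lemma: a continuous function on an interval whose upper right Dini derivative is negative everywhere is nonincreasing. To prove it, suppose $\varphi(t)>\varphi(0)$ for some $t$. Choose $\varepsilon>0$ with $\varphi(t)>\varphi(0)+\varepsilon t$ and let $s_0=\sup\{s\in[0,t]:\varphi(s)\le\varphi(0)+\varepsilon s\}$. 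By continuity $\varphi(s_0)\le\varphi(0)+\varepsilon s_0$, so $s_0<t$. For $s\in(s_0,t]$ we then have $\varphi(s)>\varphi(0)+\varepsilon s\ge\varphi(s_0)+\varepsilon(s-s_0)$, which forces the upper right Dini derivative at $s_0$ to be at least $\varepsilon>0$, a contradiction. Hence $\varphi(t)\le\varphi(0)$, which is exactly $f(w+tv)\le f(w)-\sigma t$.

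The only delicate points are this Dini-derivative monotonicity argument and making sure the whole segment $w+[0,\delta]v$ stays in the region where the Dini bound holds, which is why we take $\delta\le r$. Everything else is bookkeeping with Definition~\ref{definition 1}.
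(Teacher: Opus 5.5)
Your proof is correct and is essentially the original argument of \cite{nonsmooththeory1}; the paper itself only quotes the result without proof. That argument, like yours, uses the straight-line deformation $\mathscr H(w,t)=w+tv$ on $B_\delta(u)\times[0,\delta]$ with $B_{2\delta}(u)\subset X$, together with the monotonicity property of continuous functions whose upper right Dini derivative is negative. You prove that property directly rather than invoking it.
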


\begin{definition}\label{def p.critical}
Let $(X,d)$ be a metric space and let $f: X \to \mathbb{R}$ be continuous. We will say that $u \in X$ is a (lower) critical point if $|df|(u) = 0$. A (lower) critical point is said to be at the level $c \in \mathbb{R}$ if it is also true that $f(u) = c$.
\end{definition}

\begin{definition}\label{ps}
Let $(X,d)$ be a metric space and let $f:X\to\R$ be a continuous functional. A sequence $\{u_n\}\subset X$ is called a $(PS)_c$-sequence if
\begin{align*}
&f(u_n)\to c,\qquad |df|(u_n)\to0.
\end{align*}
Furthermore, we will say that $f$ satisfies the $(PS)_c$-condition if every subsequence admits a convergent subsequence in $X$. 
If the $(PS)_c$-condition holds for every $c\in\R$ we will simply write $(PS)$-condition.
\end{definition}
We define: $$f^c:=\{u\in X\ :\ f(u)\le c\},$$
$$K=\{u\in X\ :\ |df|(u)=0\},\qquad K_c=\{u\in X\ :\ f(u)=c,\ |df|(u)=0\}.$$
Notice that $f^c$ is closed and $K_c$ is compact if $f$ satisfies the $(PS)_c$-condition.

\begin{theorem}[Deformation Lemma, {\cite[Theorem 1.1.13]{nonsmooththeory1}}]\label{defloc}
Let $(X,d)$ be a complete metric space, let $f:X\to\R$ be a continuous functional, and suppose that $f$ satisfies the $(PS)_c$-condition. Then, given $\bar\varepsilon>0$, a neighborhood $U$ of $K_c$ $($$U=\emptyset$ if $K_c=\emptyset$$)$ and $\lambda>0$, there exist $\varepsilon>0$ and $\eta:X\times[0,1]\to X$ continuous such that for every $u\in X$ and $t\in[0,1]$ we have that 
\begin{itemize}
\item[$(i)$] $d(\eta(u,t),u)\le\lambda t$,
\item[$(ii)$] $f(\eta(u,t))\le f(u)$,
\item[$(iii)$] $\eta(u,t)=u$ if $u\notin f^{-1}(c-\bar\varepsilon,c+\bar\varepsilon)$,
\item[$(iv)$] $\eta(f^{c+\varepsilon}\setminus U,1)\subseteq f^{c-\varepsilon}$.
\end{itemize}
\end{theorem}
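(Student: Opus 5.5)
The plan is the standard one from the nonsmooth theory of \cite{corvellec1993deformation,degiovanni1994critical}. First I prove a uniform lower bound on the weak slope near the level $c$ away from $U$. Then I build a global continuous deformation out of the local deformations supplied by Definition~\ref{definition 1}. Finally I cut this deformation off in energy and rescale time, so that $(i)$–$(iv)$ hold.

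\textbf{Step 1 (quantitative slope estimate).} I claim there exist $\sigma>0$ and $\varepsilon_1\in(0,\bar\varepsilon)$ such that $|df|(u)\ge\sigma$ for every $u\in f^{-1}([c-\varepsilon_1,c+\varepsilon_1])$ with $d(u,X\setminus U)<\rho$, for some small $\rho>0$. I would argue by contradiction. Otherwise there is a sequence $u_n$ with $f(u_n)\to c$, $|df|(u_n)\to0$, and $u_n$ staying at positive distance from $K_c$, which I get by shrinking $U$ to a neighborhood of the compact set $K_c$ whose complement is at positive distance from $K_c$. By $(PS)_c$ a subsequence converges to some $u$. Since $f$ is continuous and $|df|$ is lower semicontinuous (immediate from Definition~\ref{definition 1}), we get $u\in K_c$, which contradicts the distance condition.

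\textbf{Step 2 (gluing local deformations).} For each $u$ in the closed set $A:=f^{-1}([c-\varepsilon_1,c+\varepsilon_1])\setminus U$, Definition~\ref{definition 1} with $\sigma/2$ gives $\delta_u>0$ and a continuous $\mathscr H_u$ on $B_{\delta_u}(u)\times[0,\delta_u]$ with $d(\mathscr H_u(v,t),v)\le t$ and $f(\mathscr H_u(v,t))\le f(v)-\tfrac{\sigma}{2}t$. Since metric spaces are paracompact, I take a locally finite refinement $\{V_k\}$ of $\{B_{\delta_u/2}(u)\}$ and a subordinate partition of unity. In a metric space one cannot average the maps $\mathscr H_{u_k}$. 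Instead I would follow the construction of \cite[Theorem 1.1.?]{nonsmooththeory1} / \cite{corvellec1993deformation}. The idea is to well-order the index set and define the deformation by successively applying $\mathscr H_{u_k}$ for times weighted by the partition-of-unity functions, using local finiteness to keep compositions finite and continuous. The output is a continuous $\mathscr H: A_\delta\times[0,\delta]\to X$ on a neighborhood $A_\delta$ of $A$ with $d(\mathscr H(v,t),v)\le t$ and $f(\mathscr H(v,t))\le f(v)-\tfrac{\sigma}{4}t$. \emph{This gluing is the main obstacle}. The delicate points are continuity in $(v,t)$ and the fact that the radii $\delta_u$ are not uniform. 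I expect to handle these by letting the admissible time depend continuously on $v$ through the distance to the boundary of the cover.

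\textbf{Step 3 (cutoff and conclusion).} Choose a continuous $\chi:X\to[0,1]$ equal to $1$ on $f^{-1}([c-\varepsilon_1/2,c+\varepsilon_1/2])\setminus U$ and vanishing outside $f^{-1}(c-\varepsilon_1,c+\varepsilon_1)\cap A_\delta$. Set $\eta(u,t):=\mathscr H(u,\tau\lambda\chi(u)t)$, where $\tau\le\min\{1,\delta/\lambda\}$; for large $\lambda$ this is iterated finitely many times, using the flow-like composition. Properties $(i)$–$(iii)$ are then immediate: the displacement is at most $\lambda t$, $f$ is nonincreasing along $\eta$, and $\eta(u,\cdot)=u$ outside the strip where $\chi\neq0$. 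For $(iv)$, take $\varepsilon<\min\{\varepsilon_1/2,\sigma\tau\lambda/8\}$. If $u\in f^{c+\varepsilon}\setminus U$ and $\eta(u,1)\notin f^{c-\varepsilon}$, then the whole trajectory stays in the region where $\chi=1$, because $f$ decreases along it. Hence $f$ drops by at least $\sigma\tau\lambda/4>2\varepsilon$, which is a contradiction. One also has to ensure that the trajectory does not enter $U$. I would handle this by using the $\rho$-margin from Step 1: replace $U$ by a smaller neighborhood, and choose $\lambda\tau<\rho$ where needed.
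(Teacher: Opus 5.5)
Your Step 1 is correct. Step 3 would also be correct if Step 2 delivered what you claim. In fact it is simpler than you make it: since $\chi$ is evaluated at the starting point, $(iv)$ follows at once from the uniform drop $f(\eta(u,1))\le f(u)-\frac{\sigma}{4}\tau\lambda$ whenever $u\notin U$ and $c-\varepsilon<f(u)\le c+\varepsilon$. No argument about trajectories staying in $\{\chi=1\}$ or avoiding $U$ is needed, and large $\lambda$ is the trivial case (just shrink $\tau$).

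The genuine gap is Step 2. Gluing the local maps $\mathscr H_u$ by a locally finite partition of unity and ordered composition only gives a deformation defined for $0\le t\le\delta(v)$. Here $\delta(\cdot)$ is positive, and can be made continuous, but it is not bounded away from $0$ on the non-compact set $A$, because the radii $\delta_u$ are not uniform. Your proposed remedy, letting the admissible time depend continuously on $v$, is exactly what destroys uniformity, and finitely many iterations do not restore it. Yet Step 3 needs a single $\delta$ on $A_\delta\times[0,\delta]$: the drop $\frac{\sigma}{4}\tau\lambda$ must be independent of $u$ in order to choose $\varepsilon$. So the central claim of the proof is asserted, not derived.

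A further sign that something essential is missing is that you never use the completeness of $X$, and it cannot be dropped. Take $X=\mathbb{R}^2\setminus\{0\}$, $f(x,y)=y$, $c=0$. Then $|df|\equiv1$, so $K_0=\emptyset$, $(PS)_0$ holds vacuously, and your Step 1 holds with $U=\emptyset$. Nevertheless no continuous $\eta$ with $(ii)$ and $(iv)$ exists. By $(ii)$, $\eta$ would give a homotopy inside $f^{\varepsilon}$ from a small circle around the origin to a loop in the convex set $f^{-\varepsilon}$, which is impossible by the winding number.

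What you need is an argument that upgrades the local-time deformation to one with a uniform time. One way is a continuation scheme:
\begin{itemize}
\item extend $t\mapsto\mathscr H(v,t)$, which is $1$-Lipschitz, past its current endpoint by re-applying the local construction;
\item use completeness of $X$ to pass to the limit at accumulation times;
\item use the $\rho$-margin from Step 1 (displacement $\le t<\rho$, $f$ nonincreasing) to keep the trajectory where $|df|\ge\sigma$, so the extension cannot stop before a fixed time;
\item check joint continuity in $(v,t)$ of the resulting map.
\end{itemize}
This is the step that the nonsmooth theory \cite{corvellec1993deformation,degiovanni1994critical} handles using completeness. The paper itself gives no proof here; it simply quotes \cite[Theorem 1.1.13]{nonsmooththeory1}.
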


We recall some relevant properties of the energy functional 
$J_\lambda:H^1(\Omega)\to\mathbb{R}$ established in \cite{CaninoMauroNeumann2026}.

\begin{proposition}[{\cite[Proposition 2.6]{CaninoMauroNeumann2026}}]
Assume that \eqref{a.1} and \eqref{g.1'} hold.
The functional $ J_\lambda$ is continuous in $H^1(\Omega)$.
Furthermore,
\begin{equation}\label{dis weak slope}
  |d J_\lambda|( u)\ge\sup_{\substack{ v\in C^\infty(\overline\Omega)\\\| v\|_{H^1}\le1}}\langle  J_\lambda'( u), v\rangle,
\end{equation}
where
\begin{align}
    \langle J_\lambda'(u),v\rangle&=\int_\Omega\sum_{i,j=1}^Na_{ij}(x,u)D_{x_i}uD_{x_j}v+\frac12\int_\Omega\sum_{i,j=1}^N\left(D_sa_{ij}(x,u)D_{x_i}uD_{x_j}u\right)v-\int_\Omega g(u)v+\lambda\int_\Omega uv
\end{align}
\end{proposition}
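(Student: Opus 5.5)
We first prove continuity by a subsequence argument combined with a generalized dominated convergence theorem. Then we obtain \eqref{dis weak slope} from Theorem~\ref{theorem 1}, applied along the direction $-v$ for each fixed smooth $v$.

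\emph{Continuity.} Let $u_n\to u$ in $H^1(\Omega)$. It suffices to show that every subsequence has a further subsequence along which $J_\lambda(u_n)\to J_\lambda(u)$. Passing to a subsequence, we may assume that $u_n\to u$ and $Du_n\to Du$ a.e. in $\Omega$, and that $|Du_n|^2\le h$ for some $h\in L^1(\Omega)$; alternatively we use Vitali's theorem, since $|Du_n|^2\to|Du|^2$ in $L^1$.

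Since $a_{ij}(x,\cdot)$ is continuous, the integrands $\sum_{i,j} a_{ij}(x,u_n)D_{x_i}u_nD_{x_j}u_n$ converge a.e. By \eqref{a.1} they are bounded by $C_0N^2|Du_n|^2$, so dominated convergence gives convergence of the quadratic term. For the other terms, the embedding $H^1\hookrightarrow L^p$ (recall $p<2^*$), the bound $|G(s)|\le C(|s|+|s|^p)$ from \eqref{g.1'}, and the continuity of the Nemytskii operator give $\int_\Omega G(u_n)\to\int_\Omega G(u)$. Trivially, $\int_\Omega u_n^2\to\int_\Omega u^2$.

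\emph{Directional derivatives.} Fix $v\in C^\infty(\overline\Omega)$, so that $v\in H^1(\Omega)\cap L^\infty(\Omega)$. I claim that for every $w\in H^1(\Omega)$,
\[
\lim_{t\to0}\frac{J_\lambda(w+tv)-J_\lambda(w)}{t}=\langle J_\lambda'(w),v\rangle .
\]
Expand $a_{ij}(x,w+tv)D_{x_i}(w+tv)D_{x_j}(w+tv)$. The only delicate piece is
\[
\frac{a_{ij}(x,w+tv)-a_{ij}(x,w)}{t}\,D_{x_i}wD_{x_j}w .
\]
By the mean value theorem and \eqref{a.1}, it is dominated by $C_0\|v\|_\infty|Dw|^2\in L^1$, and it converges a.e. to $D_sa_{ij}(x,w)\,v\,D_{x_i}wD_{x_j}w$. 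The remaining pieces are linear or quadratic in $tv$ and have bounded coefficients, so they are handled directly. The $G$-term is differentiated under the integral using \eqref{g.1'}.

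Next, I show that $w\mapsto\langle J_\lambda'(w),v\rangle$ is continuous on $H^1(\Omega)$ for this fixed $v$. Along a.e.-convergent subsequences the argument is the same as for continuity: the key term $D_sa_{ij}(x,w_n)D_{x_i}w_nD_{x_j}w_n\,v$ is bounded by $C_0\|v\|_\infty|Dw_n|^2$, which converges in $L^1$. The term $g(w_n)v$ is controlled by the $L^{p/(p-1)}$ continuity of the Nemytskii operator $g$. This step, namely exploiting $v\in L^\infty$ to tame the $L^1$ term, is the main point where the lack of $C^1$ regularity must be circumvented.

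\emph{Weak slope.} Let $v\in C^\infty(\overline\Omega)$ with $\|v\|_{H^1}\le1$ and $\langle J_\lambda'(u),v\rangle>0$; if no such $v$ exists, the supremum is at most $0\le|dJ_\lambda|(u)$ and there is nothing to prove. Setting $\hat v=v/\|v\|_{H^1}$, linearity gives $\langle J_\lambda'(u),\hat v\rangle\ge\langle J_\lambda'(u),v\rangle$, so we may assume $\|v\|_{H^1}=1$.

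Apply Theorem~\ref{theorem 1} with $E=X=H^1(\Omega)$ and direction $-v$. By the previous step, $\overline D_+J_\lambda(w)[-v]=-\langle J_\lambda'(w),v\rangle$, hence
\[
|dJ_\lambda|(u)\ge-\limsup_{w\to u}\bigl(-\langle J_\lambda'(w),v\rangle\bigr)=\liminf_{w\to u}\langle J_\lambda'(w),v\rangle=\langle J_\lambda'(u),v\rangle,
\]
where the last equality uses the continuity in $w$ established above. Taking the supremum over $v$ yields \eqref{dis weak slope}.
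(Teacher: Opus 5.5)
Your proof is correct and takes essentially the same route as the argument behind the cited result in \cite{CaninoMauroNeumann2026} (the paper only cites it, but its introduction names the same ingredients): continuity of $J_\lambda$ by dominated convergence along a.e.-convergent subsequences, and G\^ateaux differentiability along bounded directions $v\in C^\infty(\overline\Omega)$, where $\|v\|_\infty$ controls the merely $L^1$ term $D_sa_{ij}(x,w)D_{x_i}wD_{x_j}w\,v$. It also uses the continuity of $w\mapsto\langle J_\lambda'(w),v\rangle$ and then Theorem~\ref{theorem 1} in the direction $-v$, and your normalization to $\|v\|_{H^1}=1$ and the subsequence-of-subsequence steps are handled correctly.
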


\begin{definition}
    Let $\{ u_n\}\subset H^1(\Omega)$ be a sequence. We say that $\{u_n\}$ is a Concrete Palais-Smale sequence at level $c\in\R$ if $J_\lambda(u_n)\to c$ and
    \begin{align}
  -\sum_{i,j=1}^ND_{x_j}(a_{ij}(x,u_n)D_{x_i}u_n)&+\frac12\sum_{i,j=1}^ND_sa_{ij}(x,u_n)D_{x_i}u_n D_{x_j}u_n-g(u_n)+\lambda u_n\to0\quad \text{in $\left(H^1(\Omega)\right)'$.}
    \end{align}
    The functional $J_\lambda$ satisfies the Concrete Palais-Smale condition at level $c\in\R$ if every Concrete Palais-Smale sequence at level $c\in\R$ admits a convergent subsequence in $H^1(\Omega)$.
\end{definition}

\begin{proposition}[{\cite[Proposition 2.8]{CaninoMauroNeumann2026}}]\label{PS e CPS}
The following facts hold:
\begin{enumerate}
\item[$(i)$] if $u$ is a (lower) critical point for $J_\lambda$, then $u$ is a weak solution of the problem \eqref{P},
\item[$(ii)$] each $(PS)_c$-sequence is also a $(CPS)_c$-sequence, for every $c\in\R$,
\item[$(iii)$] $J_\lambda$ satisfies $(CPS)_c$-condition$\implies J_\lambda$ satisfies $(PS)_c$-condition.
\end{enumerate}
\end{proposition}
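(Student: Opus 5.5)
The three assertions all follow from the weak-slope estimate \eqref{dis weak slope} together with density arguments; (iii) is a direct formal consequence of (ii).

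\textbf{Step (i).} Let $u$ be a lower critical point, so $|dJ_\lambda|(u)=0$. By \eqref{dis weak slope}, $\langle J_\lambda'(u),v\rangle\le 0$ for every $v\in C^\infty(\overline\Omega)$ with $\|v\|_{H^1}\le1$. Replacing $v$ by $-v$, and using that $v\mapsto\langle J_\lambda'(u),v\rangle$ is linear, we get $\langle J_\lambda'(u),v\rangle=0$ for all $v\in C^\infty(\overline\Omega)$.

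To reach a general test function $v\in H^1(\Omega)\cap L^\infty(\Omega)$, I approximate it by smooth functions. Since $\partial\Omega$ is Lipschitz, $C^\infty(\overline\Omega)$ is dense in $H^1(\Omega)$. I would choose $v_k\in C^\infty(\overline\Omega)$ with the following properties:
\begin{itemize}
\item $v_k\to v$ in $H^1(\Omega)$ and a.e. in $\Omega$;
\item $\sup_k\|v_k\|_{L^\infty}\le C\|v\|_{L^\infty}$.
\end{itemize}
Such a sequence can be obtained, for instance, by extending $v$ to $\mathbb R^N$ with a bounded Lipschitz-domain extension operator and then mollifying; mollification preserves the $L^\infty$ bound. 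Alternatively one composes smooth approximants with a smooth bounded truncation $\theta_M$ that equals the identity on $[-\|v\|_\infty,\|v\|_\infty]$.

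Then we pass to the limit term by term in $\langle J_\lambda'(u),v_k\rangle=0$:
\begin{itemize}
\item The terms $\int a_{ij}(x,u)D_{x_i}u\,D_{x_j}v_k$, $\lambda\int uv_k$ and $\int g(u)v_k$ converge by the $H^1$ convergence, using \eqref{a.1} and \eqref{g.1'} with Sobolev embeddings.
\item The term $\tfrac12\int \sum_{i,j}D_sa_{ij}(x,u)D_{x_i}uD_{x_j}u\,v_k$ converges by dominated convergence. Its integrand is dominated by $C\|v\|_\infty C_0|\nabla u|^2\in L^1$ and converges a.e.
\end{itemize}
Hence $u$ is a weak solution.

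\textbf{Step (ii).} Let $\{u_n\}$ be a $(PS)_c$-sequence. For each $n$, consider the distribution
\[
w_n:=-\sum_{i,j}D_{x_j}\bigl(a_{ij}(x,u_n)D_{x_i}u_n\bigr)+\tfrac12\sum_{i,j}D_sa_{ij}(x,u_n)D_{x_i}u_nD_{x_j}u_n-g(u_n)+\lambda u_n .
\]
It acts on $C^\infty(\overline\Omega)$ by $v\mapsto\langle J_\lambda'(u_n),v\rangle$, with the Neumann condition encoded in using test functions smooth up to the boundary. By \eqref{dis weak slope}, applied to $v$ and $-v$,
\[
|\langle J_\lambda'(u_n),v\rangle|\le |dJ_\lambda|(u_n)\,\|v\|_{H^1}\qquad\text{for all }v\in C^\infty(\overline\Omega).
\]
By density of $C^\infty(\overline\Omega)$ in $H^1(\Omega)$, $w_n$ extends uniquely to an element of $(H^1(\Omega))'$ with $\|w_n\|_{(H^1)'}\le|dJ_\lambda|(u_n)\to0$. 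This extension is consistent with the $H^1\cap L^\infty$ pairing by the approximation used in Step (i). Together with $J_\lambda(u_n)\to c$, this shows that $\{u_n\}$ is a $(CPS)_c$-sequence.

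\textbf{Step (iii).} Assume $(CPS)_c$ holds and let $\{u_n\}$ be a $(PS)_c$-sequence. By (ii) it is a $(CPS)_c$-sequence, so it admits a subsequence converging in $H^1(\Omega)$; the same applies to any of its subsequences. This is exactly the $(PS)_c$-condition.

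\textbf{Main obstacle.} The only genuinely delicate point is the approximation in Step (i). One needs smooth approximants that are uniformly bounded in $L^\infty$ and converge a.e., because the quadratic gradient term is only in $L^1$ and is not controlled by the $H^1$ norm of the test function. I would handle this with the extension-and-mollification argument described above.
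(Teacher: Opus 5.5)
Your proof is correct and follows the standard argument behind this result; the paper gives no proof of its own and only cites \cite[Proposition 2.8]{CaninoMauroNeumann2026}. Applying \eqref{dis weak slope} to $\pm v$ gives $|\langle J_\lambda'(u),v\rangle|\le |dJ_\lambda|(u)\,\|v\|_{H^1}$ on $C^\infty(\overline\Omega)$, which yields both (i) and the $(H^1(\Omega))'$-extension in (ii), and test functions in $H^1(\Omega)\cap L^\infty(\Omega)$ are reached by the same $L^\infty$-bounded density argument (with dominated convergence for the quadratic gradient term) that the paper invokes in the proof of Theorem~\ref{convergenza e soluzioni deboli}.
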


\begin{theorem}\label{convergenza e soluzioni deboli}
    Let $\{u_n\}\subset H^1(\Omega)$ be a sequence such that $u_n\wto u$ in $H^1(\Omega)$, $\omega_n\to \omega$ in $\left(H^1(\Omega)\right)'$, and
    \[
    \int_\Omega\sum_{i,j=1}^Na_{ij}(x,u_n)D_{x_i}u_n D_{x_j}\varphi+\frac12\int_\Omega\sum_{i,j=1}^N\left(D_sa_{ij}(x,u_n) D_{x_i}u_n D_{x_j}u_n\right)\varphi=\langle\omega_n,\varphi\rangle,
    \]
    for every $\varphi\in H^1(\Omega)\cap L^\infty(\Omega)$. Then 
    \[
     \int_\Omega\sum_{i,j=1}^Na_{ij}(x,u)D_{x_i}u D_{x_j}\varphi+\frac12\int_\Omega\sum_{i,j=1}^N\left(D_sa_{ij}(x,u) D_{x_i}u D_{x_j}u\right)\varphi=\langle\omega,\varphi\rangle.
    \]
\end{theorem}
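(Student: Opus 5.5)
The plan is to prove weak lower semicontinuity of the operator in the distributional sense. We split the left-hand side into the principal part and the quadratic-gradient part, and aim to show that both converge. The main tool is strong convergence of gradients, in the form $\nabla u_n\to\nabla u$ in $L^2$ (or at least a.e.) along a subsequence, obtained by an adaptation of the Boccardo--Murat/Landes truncation technique. After extracting a subsequence, Rellich gives $u_n\to u$ in $L^2(\Omega)$ and a.e. Then $a_{ij}(x,u_n)\to a_{ij}(x,u)$ and $D_sa_{ij}(x,u_n)\to D_sa_{ij}(x,u)$ a.e., with uniform bounds by \eqref{a.1}.

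\textbf{Step 1 (gradients converge a.e.).} We test the equation for $u_n$ with the bounded function
\[
\varphi_n=e^{-M\psi(u_n)}T_k(u_n-u)\,\zeta,
\]
where $T_k$ is truncation at level $k$, $\zeta\ge0$ is a cutoff (or $\zeta\equiv1$), and the exponential weight is chosen to absorb the quadratic term. This is the classical trick for natural-growth terms $|\nabla u|^2$. Since $\frac12 D_sa_{ij}D_{x_i}u_nD_{x_j}u_n$ is bounded by $C|\nabla u_n|^2$, we take $\varphi_n=e^{-M|u_n|}\dots$ on the set where the sign is unfavorable, or use the standard pair of tests $e^{\pm M u_n}$ on positive and negative parts. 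Combining ellipticity \eqref{a.2} with $\omega_n\to\omega$ strongly and $T_k(u_n-u)\rightharpoonup0$ weakly in $H^1$, we obtain
\[
\limsup_n\int_\Omega \sum a_{ij}(x,u_n)D_i(u_n-u)D_j T_k(u_n-u)\le Ck.
\]
Here we use that $a_{ij}(x,u_n)D_iu$ converges strongly in $L^2$ by dominated convergence. Sending $k\to0$ gives $\nabla u_n\to\nabla u$ in measure, and hence a.e. for a subsequence. Unboundedness of $u$ is the delicate point. We will therefore localize on $\{|u|\le R\}$ and, following Boccardo--Murat, use the fact that $\int_{\{|u_n-u|<k\}}|\nabla(u_n-u)|^2\to$ small.

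\textbf{Step 2 (passing to the limit).} The principal term converges for every $\varphi\in H^1\cap L^\infty$. Indeed, $a_{ij}(x,u_n)D_{x_j}\varphi\to a_{ij}(x,u)D_{x_j}\varphi$ strongly in $L^2$, and $D_{x_i}u_n\rightharpoonup D_{x_i}u$ weakly in $L^2$. The quadratic term only converges a.e., and it is bounded only in $L^1$. To handle it we use the Fatou-type argument of Degiovanni and collaborators, taking test functions $\varphi\ge0$ of the special form $\varphi e^{-M u_n}$. Fatou then yields one inequality; the symmetric choice $e^{+Mu_n}$ and $-\varphi$ yields the other.

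More concretely, first take $\varphi=\psi\, e^{-\theta(u_n)}$ with $\psi\in C^\infty(\overline\Omega)$, $\psi\ge0$. The combined integrand becomes
\[
\psi e^{-\theta(u_n)}\Big[\tfrac12D_sa_{ij}-\theta'(u_n)a_{ij}\Big]D_iu_nD_ju_n,
\]
which is of one sign for suitable $\theta$. Fatou then gives an inequality in the limit, which we then undo by replacing $\psi$ with $\psi e^{\theta(u)}$. This shows $u$ satisfies the equation tested against $v\ge0$ bounded, with both signs, so equality holds. Extension from smooth $\psi$ to $H^1\cap L^\infty$ is by density, since $\omega\in(H^1)'$ and the quadratic term lies in $L^1$, which pairs with bounded functions (a.e. approximation plus dominated convergence). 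An alternative is to upgrade Step 1 to strong $L^2$ convergence of gradients, which gives $L^1$ convergence of the quadratic term directly (Vitali).

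The main obstacle is Step 1 (or the Fatou sign argument) without any $L^\infty$ bound on $u_n$. I would first try the exponential-weight Fatou argument directly, which avoids needing Step 1 beyond a.e. convergence.
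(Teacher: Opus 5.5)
The paper does not reprove this statement. It quotes Step 1 of \cite[Theorem 4.1]{CaninoMauroNeumann2026} for $\varphi\in C^\infty(\overline\Omega)$ and extends by density. Your outline is the standard mechanism behind that cited step: a.e.\ convergence of gradients, then Fatou with exponential weights.

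Your Step 1 is essentially correct, but for a simpler reason than you give. Test with $T_k(u_n-u)\in H^1(\Omega)\cap L^\infty(\Omega)$ and use only the crude bound $\bigl|\int_\Omega \tfrac12\sum_{i,j} D_sa_{ij}(x,u_n)D_{x_i}u_nD_{x_j}u_n\,T_k(u_n-u)\bigr|\le \tfrac{NC_0}{2}\,k\,\sup_n\|\nabla u_n\|_2^2$. This gives your $\limsup\le Ck$ directly. No exponential weight is needed; it would not produce a one-signed term there anyway, since $T_k(u_n-u)$ changes sign independently of $u_n$. Nor is localization on $\{|u|\le R\}$ needed.

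The genuine gap is in Step 2, exactly where you flag the main obstacle. The test functions $\psi e^{\mp Mu_n}$ are not admissible, since $u_n\notin L^\infty$ in general. For $\psi e^{-\theta(u_n)}$ to lie in $H^1(\Omega)\cap L^\infty(\Omega)$ you need $\theta$ bounded below. Using only \eqref{a.1}--\eqref{a.2}, making $\bigl[\tfrac12 D_sa_{ij}-\theta'(s)a_{ij}\bigr]\xi_i\xi_j$ one-signed forces $\theta'\ge NC_0/(2\nu_0)$ on all of $\mathbb R$. Hence $\theta(s)\to-\infty$ as $s\to-\infty$, and the ``suitable $\theta$'' does not exist.

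The missing ingredient is \eqref{a.3}, which your proposal never invokes. Since $D_sa_{ij}(x,s)\xi_i\xi_j$ has the sign of $s$, the correction is needed only on one half-line. Take $\theta(s)=Ms^+$ with $M\ge NC_0/(2\nu_0)$. Then the weight $e^{-Mu_n^+}\in(0,1]$ is admissible and the bracket is $\le0$. You may take any $\psi\ge0$ in $H^1\cap L^\infty$ directly, since the hypothesis allows it. On $\{u=0\}$ the indicator $\chi_{\{u_n>0\}}$ may oscillate, but $\nabla u_n\to\nabla u=0$ a.e.\ there. Fatou then shows that the left-hand side of the limit identity, tested with $\psi e^{-Mu^+}$, is $\ge\langle\omega,\psi e^{-Mu^+}\rangle$. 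Choosing $\theta(s)=Ms^-$ gives the reverse inequality for $\psi e^{-Mu^-}$.

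Your undo step has the same flaw: $\psi e^{\theta(u)}=\psi e^{Mu^\pm}$ is unbounded unless $u\in L^\infty$. Instead insert $\psi=\varphi e^{M\min\{u^\pm,k\}}$, which gives the inequalities for $\varphi e^{-M(u^\pm-k)^+}$. Let $k\to\infty$ by dominated convergence, then split $\varphi=\varphi^+-\varphi^-$.

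Your suggested alternative, first upgrading to strong $L^2$ convergence of gradients, does not avoid this. Here strong convergence is normally obtained only after the limit equation, via Theorem~\ref{brezis browder} and \eqref{a.3}. The other route, Bensoussan--Boccardo--Murat-type arguments, again relies on the sign condition.
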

\begin{proof}
   It follows from \cite[Theorem 4.1, Step 1, equation (4.5)]{CaninoMauroNeumann2026} that the statement holds for every $\varphi \in C^\infty(\overline{\Omega})$.  Since $C^\infty(\overline{\Omega})$ is dense in $H^1(\Omega)\cap L^\infty(\Omega)$, the conclusion follows by a standard density argument.
\end{proof}

\begin{theorem}[{\cite[Theorem 3.1]{CaninoMauroNeumann2026}}]\label{brezis browder}
    Assume that \eqref{a.1}-\eqref{a.3} hold.
    Let $u\in H^1(\Omega)$ and $ \omega\in \left(H^1(\Omega)\right)'$ such that
    \begin{equation}\label{eq thm density}
   \int_\Omega \sum_{i,j=1}^Na_{ij}(x,u)D_{x_i}u D_{x_j} v+\frac12\int_\Omega \sum_{i,j=1}^N\left(D_sa_{ij}(x,u)D_{x_i}uD_{x_j}u\right)v= \langle\omega,v\rangle,
    \end{equation}
    for every $v\in C^\infty(\overline\Omega)$.
    Then 
    \[
    \int_\Omega\sum_{i,j=1}^Na_{ij}(x,u)D_{x_i}u D_{x_j}u+\frac12\int_\Omega \sum_{i,j=1}^N\left(D_sa_{ij}(x,u)D_{x_i}u D_{x_j}u\right)u=\langle\omega, u\rangle.
    \]
\end{theorem}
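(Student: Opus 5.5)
The plan is a Brezis--Browder type argument. First I extend the identity \eqref{eq thm density} from $C^\infty(\overline\Omega)$ to all test functions in $H^1(\Omega)\cap L^\infty(\Omega)$. Then I test with the truncations $T_k(u)=\max\{-k,\min\{k,u\}\}$ and pass to the limit $k\to\infty$, using the sign structure in \eqref{a.3}. Throughout, write
\[
A(u)=\sum_{i,j=1}^N a_{ij}(x,u)D_{x_i}uD_{x_j}u,
\qquad
B(u)=\sum_{i,j=1}^N D_sa_{ij}(x,u)D_{x_i}uD_{x_j}u .
\]
By \eqref{a.1}, both $A(u)$ and $B(u)$ lie in $L^1(\Omega)$.

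\emph{Step 1 (bounded test functions).} Fix $v\in H^1(\Omega)\cap L^\infty(\Omega)$ and set $M=\|v\|_\infty+1$. Since $\Omega$ is Lipschitz, there are $w_k\in C^\infty(\overline\Omega)$ with $w_k\to v$ in $H^1(\Omega)$ and a.e.

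Choose a smooth bounded $\theta:\mathbb R\to\mathbb R$ with $\theta(t)=t$ for $|t|\le M$ and $\theta'$ bounded. Put $v_k=\theta\circ w_k\in C^\infty(\overline\Omega)$. Then $\|v_k\|_\infty\le C$ and $v_k\to\theta(v)=v$ a.e. Also $v_k\to v$ in $H^1(\Omega)$: the chain rule gives $\nabla v_k=\theta'(w_k)\nabla w_k$, and one applies dominated convergence together with $\theta'(v)=1$.

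Now pass to the limit in \eqref{eq thm density} with $v_k$. The term $\int a_{ij}(x,u)D_{x_i}uD_{x_j}v_k$ converges because the coefficients are bounded and $\nabla v_k\to\nabla v$ in $L^2$. The term $\int B(u)v_k$ converges by dominated convergence, with majorant $C|B(u)|\in L^1$. Finally $\langle\omega,v_k\rangle\to\langle\omega,v\rangle$. So \eqref{eq thm density} holds for every bounded $H^1$ test function.

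\emph{Step 2 (truncation and monotone passage).} Test with $T_k(u)\in H^1(\Omega)\cap L^\infty(\Omega)$. Since $\nabla T_k(u)=\nabla u\,\mathbf 1_{\{|u|<k\}}$, we get
\[
\int_{\{|u|<k\}} A(u)\;+\;\frac12\int_\Omega B(u)\,T_k(u)\;=\;\langle\omega,T_k(u)\rangle .
\]
The first integral increases to $\int_\Omega A(u)$ by monotone convergence, since $A(u)\ge0$ by \eqref{a.2}.

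For the second integral, $T_k(u)$ has the same sign as $u$ and $|T_k(u)|\uparrow|u|$. Hence, by the left inequality in \eqref{a.3}, $B(u)T_k(u)\ge0$ and it increases pointwise to $B(u)u$. By the right inequality in \eqref{a.3}, $0\le B(u)u\le\gamma A(u)\in L^1(\Omega)$. So $B(u)u$ is integrable and monotone convergence applies.

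On the right-hand side, $T_k(u)\to u$ in $H^1(\Omega)$, so $\langle\omega,T_k(u)\rangle\to\langle\omega,u\rangle$. Letting $k\to\infty$ gives the claimed identity.

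\emph{Main obstacle.} The delicate point is the unbounded product $B(u)u$, which is a priori not integrable. Structure condition \eqref{a.3} is exactly what resolves it, since it gives both the sign needed for monotonicity and the $L^1$ bound $\gamma A(u)$. Step 1 is routine, but the approximation must be kept uniformly bounded in $L^\infty$; composing with $\theta$ achieves this while preserving smoothness up to the boundary.
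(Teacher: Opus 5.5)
Your argument is correct. The extension to test functions in $H^1(\Omega)\cap L^\infty(\Omega)$ via $\theta\circ w_k$ is exactly the ``standard density argument'' the paper invokes in the proof of Theorem~\ref{convergenza e soluzioni deboli}. Testing with $T_k(u)$ and passing to the limit by monotone convergence, using the sign condition in \eqref{a.3}, is the Brezis--Browder mechanism behind the cited result; the paper does not reprove that result but imports it from \cite[Theorem 3.1]{CaninoMauroNeumann2026}, so your argument supplies a self-contained version.

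As a minor refinement, the upper bound $B(u)u\le\gamma A(u)$ is not needed. The truncated identity already gives $\int_\Omega B(u)u=2\bigl(\langle\omega,u\rangle-\int_\Omega A(u)\bigr)<+\infty$ in the monotone limit, so only the lower bound $B(u)u\ge 0$ is used.
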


\begin{theorem}\label{PS condition}
    Assume that \eqref{a.1}-\eqref{a.3} and \eqref{g.1'}-\eqref{g.2} hold. The energy functional $J_\lambda$ satisfies the $(CPS)_c$-condition and the $(PS)_c$-condition for every $c,\lambda\in\R$.
\end{theorem}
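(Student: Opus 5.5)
By Proposition~\ref{PS e CPS}$(iii)$ it suffices to prove the $(CPS)_c$-condition. Fix $c,\lambda\in\mathbb R$ and let $\{u_n\}$ be a $(CPS)_c$-sequence. We then have $J_\lambda(u_n)\to c$ and $\omega_n\to0$ in $(H^1(\Omega))'$, where $\omega_n$ denotes the left-hand side of the concrete equation. The plan has three steps: a priori bound in $H^1(\Omega)$, passage to the limit via Theorem~\ref{convergenza e soluzioni deboli}, and upgrading weak to strong convergence via Theorem~\ref{brezis browder} together with a lower semicontinuity argument.

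\textbf{Step 1 (boundedness).} By Theorem~\ref{brezis browder} we may test with $u_n$ itself, obtaining $\langle\omega_n,u_n\rangle$. We combine this with $q J_\lambda(u_n)$:
\[
qJ_\lambda(u_n)-\langle\omega_n,u_n\rangle=\int_\Omega\sum_{i,j}\Big[\big(\tfrac q2-1\big)a_{ij}(x,u_n)-\tfrac12 D_sa_{ij}(x,u_n)u_n\Big]D_{x_i}u_nD_{x_j}u_n+\lambda\big(\tfrac q2-1\big)\!\int_\Omega u_n^2+\int_\Omega\big(g(u_n)u_n-qG(u_n)\big).
\]
By \eqref{a.3} and \eqref{a.2}, the gradient term is at least $\frac{q-2-\gamma}{2}\nu_0\|\nabla u_n\|_2^2$, and by \eqref{g.2} the last integral is nonnegative. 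For $\lambda>0$ this gives $\|u_n\|_{H^1}^2\le C+C\|u_n\|_{H^1}$, hence boundedness. For $\lambda\le0$ the term $\lambda\int u_n^2$ has the wrong sign, and this is the main obstacle. To handle it, I would use that \eqref{g.2} yields $G(s)\ge c|s|^q-C$, so that $g(s)s-qG(s)\ge(q'-q)G(s)$ is not available directly. Instead, I would exploit superquadratic growth in the following way. From $J_\lambda(u_n)-\frac12\langle\omega_n,u_n\rangle$ (using \eqref{a.3} with $D_sa\,s\ge0$) we get $\int\big(\frac12 g(u_n)u_n-G(u_n)\big)\le C+o(1)\|u_n\|$. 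Since $\frac12 g(s)s-G(s)\ge(\frac q2-1)G(s)\ge c|s|^q-C$, this gives $\|u_n\|_q^q\le C(1+\|u_n\|)$, and hence $\|u_n\|_2^2\le C(1+\|u_n\|)^{2/q}$. Inserting this into the previous identity controls $|\lambda|\int u_n^2$ by a sublinear power of $\|u_n\|$, which yields boundedness.

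\textbf{Step 2 (limit equation).} Up to a subsequence, $u_n\wto u$ in $H^1(\Omega)$, strongly in $L^r(\Omega)$ for $r<2^*$, and a.e. By \eqref{g.1'} and Rellich, $g(u_n)-\lambda u_n\to g(u)-\lambda u$ in $(H^1)'$. Theorem~\ref{convergenza e soluzioni deboli} then gives that $u$ solves the equation with right-hand side $g(u)-\lambda u$ for test functions in $H^1\cap L^\infty$, and Theorem~\ref{brezis browder} allows us to test with $u$.

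\textbf{Step 3 (strong convergence).} Testing with $u_n$ and with $u$, and using the strong convergence of the lower-order terms, we get
\[
\int_\Omega\sum_{i,j}\Big[a_{ij}(x,u_n)+\tfrac12D_sa_{ij}(x,u_n)u_n\Big]D_{x_i}u_nD_{x_j}u_n\to\int_\Omega\sum_{i,j}\Big[a_{ij}(x,u)+\tfrac12D_sa_{ij}(x,u)u\Big]D_{x_i}uD_{x_j}u .
\]
Both integrands are nonnegative quadratic forms by \eqref{a.2}--\eqref{a.3}, with coefficients converging a.e. Lower semicontinuity (weak convergence of $\nabla u_n$ together with a.e. convergence of the uniformly bounded coefficients) applies separately to the $a_{ij}$ part and to the $D_sa_{ij}\,s$ part. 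Since their sum converges, each part converges. In particular $\int a_{ij}(x,u_n)D_iu_nD_ju_n\to\int a_{ij}(x,u)D_iuD_ju$. By ellipticity,
\[
\nu_0\|\nabla(u_n-u)\|_2^2\le\int a_{ij}(x,u_n)D_i(u_n-u)D_j(u_n-u),
\]
and expanding the right-hand side shows it tends to $0$: the cross terms converge because $a_{ij}(x,u_n)D_ju\to a_{ij}(x,u)D_ju$ strongly in $L^2$ by dominated convergence. Hence $u_n\to u$ in $H^1(\Omega)$.
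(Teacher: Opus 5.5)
There is a sign error in Step 1 for $\lambda\le0$, and it breaks the key estimate as you derive it. The rest of your architecture is sound, and more self-contained than the paper, which simply cites \cite[Theorem 4.4 and Corollary 4.2]{CaninoMauroNeumann2026} together with Proposition~\ref{PS e CPS}. That architecture is: an a priori bound, then the limit equation via Theorem~\ref{convergenza e soluzioni deboli}, then strong convergence by testing with $u_n$ and $u$ via Theorem~\ref{brezis browder} and splitting the two nonnegative quadratic parts by lower semicontinuity.

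To see the error, set
\[
A_n:=\int_\Omega\sum_{i,j=1}^N a_{ij}(x,u_n)D_{x_i}u_nD_{x_j}u_n,\qquad B_n:=\int_\Omega\sum_{i,j=1}^N D_sa_{ij}(x,u_n)u_n\,D_{x_i}u_nD_{x_j}u_n .
\]
Then $\langle\omega_n,u_n\rangle=A_n+\frac12B_n+\lambda\int_\Omega u_n^2-\int_\Omega g(u_n)u_n$, and therefore
\[
\int_\Omega\Bigl(\tfrac12 g(u_n)u_n-G(u_n)\Bigr)=J_\lambda(u_n)-\tfrac12\langle\omega_n,u_n\rangle+\tfrac14B_n .
\]
The term $B_n$ sits on the side you want to bound from above. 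Its nonnegativity, the lower half of \eqref{a.3}, therefore works against you. The upper half of \eqref{a.3} only gives $B_n\le\gamma A_n$, and $A_n$ is not yet controlled at this stage. So the bound $\int_\Omega(\frac12g(u_n)u_n-G(u_n))\le C+o(1)\|u_n\|$ does not follow. Your computation would be valid only if $B_n=0$, as in the semilinear case.

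The fix is to use the weight $2+\gamma$ instead of $2$, exactly as in Lemma~\ref{least energy level finite}:
\[
(2+\gamma)J_\lambda(u_n)-\langle\omega_n,u_n\rangle=\tfrac12(\gamma A_n-B_n)+\tfrac{\gamma\lambda}{2}\int_\Omega u_n^2+\int_\Omega\bigl(g(u_n)u_n-(2+\gamma)G(u_n)\bigr).
\]
Here $\gamma A_n-B_n\ge0$ by \eqref{a.3}. Moreover, $g(s)s-(2+\gamma)G(s)\ge(q-2-\gamma)G(s)\ge a|s|^q-b$, since $\gamma<q-2$. Absorbing $\frac{\gamma|\lambda|}{2}\|u_n\|_2^2\le\varepsilon\|u_n\|_q^q+C_\varepsilon$ gives $\|u_n\|_q^q\le C(1+\|u_n\|)$. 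From there the rest of your argument goes through: the bound on $\|u_n\|_2^2$, then insertion into the $q$-identity. Alternatively, a single combination $\theta J_\lambda(u_n)-\langle\omega_n,u_n\rangle$ with $\theta\in(2+\gamma,q)$ controls $\|\nabla u_n\|_2^2$ and $\|u_n\|_q^q$ at once.

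A smaller point in Step 3: the coefficients $D_sa_{ij}(x,u_n)u_n$ are not bounded by \eqref{a.1} alone, because of the factor $u_n$. Your dominated convergence argument needs this uniform bound. It follows from \eqref{a.1} and \eqref{a.3}, since $(D_sa_{ij}(x,s)s)_{ij}$ is symmetric positive semidefinite with quadratic form at most $\gamma NC_0|\xi|^2$.
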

\begin{proof}
   The functional $J$ satisfies the $(CPS)$-condition by \cite[Theorem 4.4 and Corollary 4.2]{CaninoMauroNeumann2026}. Since every $(PS)$-sequence is also a $(CPS)$-sequence, by Proposition \ref{PS e CPS}, the $(PS)$-condition follows.
\end{proof}
\begin{theorem}[{\cite[Theorem 3.4]{CaninoMauroNeumann2026}}]\label{regolarità}
  Assume that \eqref{a.1}-\eqref{a.3} and \eqref{g.1'} hold.
    Let $u\in H^1(\Omega)$ be a weak solution of 
    \[
    \left\{
\begin{aligned}
-&\sum_{i,j=1}^ND_{x_j}(a_{ij}(x,u)D_{x_i}u)+\frac12\sum_{i,j=1}^ND_sa_{ij}(x,u)D_{x_i}uD_{x_j}u=g(u)-\lambda u,\quad \text{in $\Omega$},\\
&\sum_{i,j=1}^Na_{ij}(x,u)D_{x_i}u\ \nu_j=0,\quad\text{on $\partial\Omega$}.
\end{aligned}
\right.
    \]
    Then $u\in L^\infty(\Omega)$.
\end{theorem}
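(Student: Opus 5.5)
The statement to prove is Theorem~\ref{regolarità}: every weak solution $u\in H^1(\Omega)$ of the Neumann problem is bounded. I would prove it by a Moser/De Giorgi-type iteration. The test functions will be built from truncations of $u$, chosen so that the sign condition \eqref{a.3} lets us discard the troublesome term $\frac12 D_s a_{ij}(x,u)D_{x_i}uD_{x_j}u\,v$.

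\textbf{Step 1: test functions.} For $k>0$ and $M>0$, set $T_M(s)=\max\{-M,\min\{s,M\}\}$ and $G_k(s)=s-T_k(s)$. I would use
\[
v=|T_M(u)|^{2\beta}\,G_k(u),\qquad \beta\ge0 .
\]
The obstacle is that $G_k(u)$ is unbounded, while the weak formulation only admits $v\in H^1\cap L^\infty$. So first I would test with bounded versions, for instance $v=|T_M u|^{2\beta}T_M(G_k(u))$, and then let the truncations go. Alternatively, one can invoke the Brezis--Browder type result, Theorem~\ref{brezis browder}, which extends the identity to admissible unbounded test functions.

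\textbf{Step 2: drop the bad term.} Each such $v$ has the sign of $u$ and vanishes where $|u|\le k$. By \eqref{a.3}, $D_s a_{ij}(x,u)D_{x_i}uD_{x_j}u\, u\ge0$, so
\[
\frac12\int_\Omega D_sa_{ij}(x,u)D_{x_i}uD_{x_j}u\,v\ \ge0
\]
and this term can be dropped from the left-hand side.

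\textbf{Step 3: energy estimate.} The derivative of $v$ contains $D\bigl(|T_Mu|^{2\beta}\bigr)G_k(u)$. This term has the same sign structure as the main one, so it adds a nonnegative quantity (by \eqref{a.2}). Using \eqref{a.2} and $|g(s)-\lambda s|\le C(1+|s|^{p-1})$, we get
\[
\nu_0\int_\Omega |T_Mu|^{2\beta}|\nabla G_k u|^2\le C\int_{\{|u|>k\}}(1+|u|^{p-1})|u|^{2\beta}|G_ku|.
\]
Write $|u|^{p-1}=|u|^{p-2}|u|$ with $a=|u|^{p-2}\in L^{N/2+\epsilon'}$, using $p<2^*$. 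One could also avoid the $\beta$ iteration and just take $\beta=0$ with a Stampacchia argument. Since $p-2<2^*-2$, the function $a=|u|^{p-2}$ belongs to $L^{r}$ with $r>N/2$. We are then in the classical setting $-\mathrm{div}(A\nabla u)\cdot\mathrm{sign}\le (1+a|u|)$ with $a\in L^r$, $r>N/2$.

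\textbf{Step 4: iteration.} I would run Moser's iteration: the Sobolev embedding $H^1(\Omega)\hookrightarrow L^{2^*}(\Omega)$, valid since $\partial\Omega$ is Lipschitz, is applied to $w=|T_Mu|^{\beta}G_k u$ (adding the $L^2$ norm, because for Neumann problems we work with the full $H^1$ norm). The main obstacle is the first step of the iteration. The coefficient $a$ is controlled only in $L^{r}$, so we need to choose $k$ large enough that $\|a\|_{L^{r}(\{|u|>k\})}$ is small and can be absorbed. This is the standard Brezis--Kato device: split $a=a\chi_{\{a\le K\}}+a\chi_{\{a>K\}}$, the second piece having small $L^{N/2}$ norm. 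That gives $u\in L^s$ for every $s<\infty$. Then the right-hand side $g(u)-\lambda u$ lies in $L^{s}$ with $s>N/2$, and a Stampacchia iteration on $A(k)=\{|u|>k\}$ with $v=T_M(G_k u)$ yields $\|u\|_\infty<\infty$.
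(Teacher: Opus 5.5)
The paper gives no proof of this statement; it is imported from \cite[Theorem 3.4]{CaninoMauroNeumann2026}. What you propose is therefore an independent, self-contained route, and it is sound.

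\textbf{How your argument works.} Its core is your observation that the sign part of \eqref{a.3} gives $\frac12\int_\Omega\sum_{i,j}D_sa_{ij}(x,u)D_{x_i}uD_{x_j}u\,v\ge0$ whenever $uv\ge0$. After discarding that term, $u$ satisfies $\int_\Omega\sum_{i,j}a_{ij}(x,u)D_{x_i}uD_{x_j}v\le\int_\Omega|g(u)-\lambda u|\,|v|$ for odd nondecreasing truncations $v=\phi(u)$. The frozen coefficients $a_{ij}(x,u(x))$ are merely bounded, measurable and uniformly elliptic. Brezis--Kato then applies, since $|u|^{p-2}\in L^{2^*/(p-2)}$ with $2^*/(p-2)>N/2$. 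Stampacchia finishes the argument, using the full $H^1$ Sobolev inequality on the Lipschitz domain and absorbing $\|G_ku\|_{L^2}\le|\{|u|>k\}|^{1/N}\|G_ku\|_{L^{2^*}}$ for $k$ large.

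\textbf{Comparison with the paper.} This is the same mechanism the paper uses in Lemma~\ref{lem:uniform-Moser}. There, however, $u|u|^{2s}$ is an admissible test function only because boundedness is already supplied by this very theorem, and $\lambda>0$ is used to drop $\lambda\int_\Omega|u|^{2s+2}$. Your truncation $T_M$ removes that circularity, and absorbing $\lambda u$ into $C(1+|u|^{p-1})$ makes the sign of $\lambda$ irrelevant. The citation buys brevity. Your argument makes visible that only the lower bound $0\le\sum_{i,j}(D_sa_{ij}\xi_i\xi_j)s$ in \eqref{a.3} is needed, not the $\gamma$-bound.

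\textbf{Two points of precision.}

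First, Theorem~\ref{brezis browder} as stated yields the identity only for $v=u$. It says nothing about unbounded test functions such as $|T_Mu|^{2\beta}G_k(u)$, so your ``alternative'' is not supported by what the paper records. The bounded route is enough, because you only need an inequality. For $v_L=|T_Mu|^{2\beta}T_L(G_ku)$ the discarded term is nonnegative and the remaining left-hand side is nondecreasing in $L$. The right-hand side is dominated by $CM^{2\beta}(1+|u|^{p-1})|u|\in L^1(\Omega)$, so you may let $L\to\infty$ with $M$ fixed.

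Second, keep $|T_Mu|^{2\beta}$ on the right-hand side instead of bounding it by $|u|^{2\beta}$, which may fail to be integrable before the iteration has run. Also, do the bootstrap with $k=0$, i.e.\ with $T_L(u)|T_Mu|^{2\beta}$ and $w=u|T_Mu|^{\beta}$. Then $|g(u)-\lambda u|\,|u|\,|T_Mu|^{2\beta}\le C\bigl(|T_Mu|^{2\beta}+w^2+|u|^{p-2}w^2\bigr)$, and the Brezis--Kato splitting closes exactly. With $w=|T_Mu|^{\beta}G_ku$ instead, the factor $|u|\,|G_ku|$ is not $w^2$ and leaves an extra $k|G_ku|$ term. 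Reserve $G_k$ for the final Stampacchia step.
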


{

\begin{theorem}[\cite{CaninoMauroNeumann2026}]\label{known existence}
Assume \eqref{a.1}--\eqref{a.3}, \eqref{g.1'}--\eqref{g.2}, and \eqref{symmetry}. For every $\lambda\in\mathbb R$, there exists a sequence of nontrivial weak solutions $u_j\in H^1(\Omega)\cap L^\infty(\Omega)$ of \eqref{P} such that
\[
J_\lambda(u_j)\longrightarrow+\infty.
\]
In particular, the set of nontrivial weak solutions is nonempty. If $\lambda\le0$, every nontrivial weak solution is nonconstant, and there are infinitely many distinct solution pairs $\{u,-u\}$.
\end{theorem}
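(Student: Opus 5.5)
The plan is to obtain the solutions as critical points of the even functional $J_\lambda$ through a nonsmooth symmetric minimax argument of fountain type. The compactness is already available: Theorem~\ref{PS condition} gives the $(PS)_c$-condition for every $c$. By Proposition~\ref{PS e CPS}(i), lower critical points are weak solutions of \eqref{P}. By Theorem~\ref{regolarità}, they belong to $L^\infty(\Omega)$. By \eqref{symmetry}, $J_\lambda(-u)=J_\lambda(u)$ and $J_\lambda(0)=0$.

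\textbf{Deformation and minimax scheme.} The Deformation Lemma (Theorem~\ref{defloc}) should be taken in its equivariant form, with $\eta(-u,t)=-\eta(u,t)$ when $f$ is even. This is standard in the framework of \cite{nonsmooththeory1}, obtained by symmetrizing the local deformations $\mathscr H$. With it, the usual Krasnosel'skii genus argument works. Define
\[
c_k=\inf_{A\in\Gamma_k}\sup_{A}J_\lambda,
\]
where $\Gamma_k$ consists of the symmetric compact sets of genus at least $k$ that are invariant under suitable odd deformations. Then each $c_k$ at which the geometry below holds is a critical value.

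\textbf{Geometry.} Fix the Neumann eigenfunctions $(e_m)$ of $\mathcal L_0+I$, with eigenvalues $\mu_m\to+\infty$. Set
\[
Y_k=\operatorname{span}\{e_1,\dots,e_k\},\qquad Z_k=\overline{\operatorname{span}}\{e_m:m\ge k\}.
\]

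\emph{Behaviour on $Y_k$.} From \eqref{g.2} we get $G(s)\ge c|s|^q-C$, and the quadratic term satisfies $\int_\Omega a_{ij}(x,u)D_{x_i}uD_{x_j}u\le C\|u\|^2$ by \eqref{a.1}. Since all norms on the finite-dimensional space $Y_k$ are equivalent and $q>2$, it follows that $J_\lambda(u)\to-\infty$ as $\|u\|\to\infty$ in $Y_k$.

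\emph{Behaviour on $Z_k$.} By \eqref{a.2} and \eqref{g.1},
\[
J_\lambda(u)\ge \tfrac{\nu_0}{2}\|\nabla u\|_2^2+\tfrac{\lambda}{2}\|u\|_2^2-\varepsilon\|u\|_2^2-C_\varepsilon\|u\|_p^p .
\]
Put $\beta_k=\sup_{Z_k,\|u\|=1}\|u\|_p$. Then $\beta_k\to0$ by compactness of $H^1\hookrightarrow L^p$, and $\|u\|_2^2\le \mu_k^{-1}\|u\|^2$ on $Z_k$. For $k$ large, the negative $L^2$ terms, including the $\lambda$ term when $\lambda<0$, are absorbed, giving
\[
J_\lambda(u)\ge c\|u\|^2-C\beta_k^p\|u\|^p \qquad\text{on } Z_k.
\]
Choosing $r_k=(C'\beta_k^p)^{-1/(p-2)}\to\infty$ yields $\inf_{Z_k\cap\partial B_{r_k}}J_\lambda=:b_k\to\infty$.

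\textbf{Conclusion for all $\lambda$.} Every set in $\Gamma_k$ meets $Z_k\cap\partial B_{r_k}$ by the genus intersection property. Hence $c_k\ge b_k\to+\infty$. Since $c_k>0=J_\lambda(0)$ for large $k$, these critical points are nontrivial. Because the levels are unbounded and $J_\lambda$ is even, we obtain infinitely many distinct pairs $\{u,-u\}$.

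\textbf{Case $\lambda\le0$.} A constant $c\neq0$ would solve $g(c)=\lambda c$, so $g(c)c=\lambda c^2\le0$. This contradicts $g(c)c\ge qG(c)>0$ from \eqref{g.2}. Hence every nontrivial solution is nonconstant.

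\textbf{Main obstacle.} I expect the main obstacle to be making the genus/fountain machinery rigorous in the merely continuous setting, namely building odd deformations from Theorem~\ref{defloc}. The estimates on $Z_k$ are harmless, because the quasilinear term is bounded below by \eqref{a.2}.
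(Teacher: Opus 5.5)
The paper gives no proof of this statement; it imports it from the symmetric multiplicity theorem of \cite{CaninoMauroNeumann2026}. Most of your ingredients are correct:
\begin{itemize}
\item evenness of $J_\lambda$, $(PS)_c$ for every $c$, and the $L^\infty$ bound;
\item $J_\lambda\to-\infty$ on $Y_k$;
\item the bound $b_k\to+\infty$ on $Z_k\cap\partial B_{r_k}$, including absorption of the $\lambda$-term when $\lambda<0$;
\item nonconstancy for $\lambda\le0$.
\end{itemize}
The odd deformation you flag as the main obstacle is available in this framework; it underlies the symmetric results of \cite{nonsmooththeory1}.

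The genuine gap is the step ``every set in $\Gamma_k$ meets $Z_k\cap\partial B_{r_k}$ by the genus intersection property.'' The genus property gives $A\cap Z_k\neq\emptyset$ when $\operatorname{genus}(A)\ge k>\operatorname{codim}Z_k$, but it says nothing about the norm of the intersection points. Take $\Gamma_k$ to be the symmetric compact sets of genus at least $k$. This class is automatically invariant under odd maps, so your qualifier adds nothing. Every sphere $Y_k\cap\partial B_\rho$ belongs to $\Gamma_k$ and misses $\partial B_{r_k}$ whenever $\rho\neq r_k$. By your own $Y_k$ estimate, $\sup_{Y_k\cap\partial B_\rho}J_\lambda\to-\infty$ as $\rho\to\infty$. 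So this $c_k$ equals $-\infty$, and $c_k\ge b_k$ fails.

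What is missing is a class that is forced to cross $Z_k\cap\partial B_{r_k}$. In the fountain theorem one fixes $\rho_k>r_k$ with $\max_{Y_k\cap\partial B_{\rho_k}}J_\lambda\le0$. Your $Y_k$ estimate supplies such a radius, but you never introduce it. One then sets
\[
\Gamma_k=\bigl\{h\in C(Y_k\cap\overline{B_{\rho_k}},H^1(\Omega)) : h \text{ odd},\ h=\mathrm{id}\ \text{on } Y_k\cap\partial B_{\rho_k}\bigr\},\qquad c_k=\inf_{h\in\Gamma_k}\max J_\lambda\circ h .
\]
The intersection $h(Y_k\cap\overline{B_{\rho_k}})\cap Z_k\cap\partial B_{r_k}\neq\emptyset$ comes from Borsuk--Ulam, not from genus:
\begin{itemize}
\item $U=\{u\in Y_k\cap B_{\rho_k}:\|h(u)\|<r_k\}$ is a bounded symmetric neighbourhood of $0$ in $Y_k$, with $\overline U\subset B_{\rho_k}$ because $h=\mathrm{id}$ on the boundary sphere and $\rho_k>r_k$.
\item Let $P$ be the projection onto $Y_{k-1}$ along $Z_k$. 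The odd map $P\circ h:\partial U\to Y_{k-1}$ must vanish somewhere, since $\dim Y_k=k>k-1$.
\end{itemize}
You must also check that $\Gamma_k$ is invariant under the odd deformation. Choosing $\bar\varepsilon<c_k$ in Theorem~\ref{defloc}, property (iii) shows that $\eta(\cdot,1)$ fixes $Y_k\cap\partial B_{\rho_k}$, where $J_\lambda\le0$.

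Alternatively, you can skip the construction and apply the nonsmooth symmetric mountain pass theorem for even continuous functionals from the framework of \cite{nonsmooththeory1}. Your $Z_k$ estimate, for one fixed large $k$ and a small radius, gives a finite-codimensional subspace $V$ with $J_\lambda\ge\alpha>0=J_\lambda(0)$ on $V\cap\partial B_\rho$, also for $\lambda<0$. Your $Y_k$ argument works on every finite-dimensional subspace. The theorem then gives critical values diverging to $+\infty$.
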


}

{
\begin{lemma}\label{least energy level finite}
Assume that \eqref{a.1}--\eqref{a.3},
\eqref{g.1'}--\eqref{g.2}, and \eqref{symmetry} hold. Then
\[
-\infty<L_\lambda<+\infty
\qquad\text{for every }\lambda\in\mathbb R.
\]
\end{lemma}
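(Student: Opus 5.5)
The finiteness from above is immediate. By Theorem~\ref{known existence}, for every $\lambda\in\mathbb R$ the set of nontrivial weak solutions of \eqref{P} is nonempty. Evaluating $J_\lambda$ at any such solution gives a real number bounding $L_\lambda$ from above, so $L_\lambda<+\infty$.

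For the lower bound I will show that $J_\lambda$ is bounded below, in fact by $0$, on the set of weak solutions. Let $u$ be a nontrivial weak solution. By Theorem~\ref{regolarità}, $u\in L^\infty(\Omega)$, so $u$ itself is an admissible test function. Alternatively, Theorem~\ref{brezis browder} can be applied with $\omega=g(u)-\lambda u\in (H^1(\Omega))'$; this belongs to the dual by \eqref{g.1'} and the subcritical growth. Testing with $u$ gives
\[
\int_\Omega\sum_{i,j=1}^N a_{ij}(x,u)D_{x_i}uD_{x_j}u+\frac12\int_\Omega\sum_{i,j=1}^N D_sa_{ij}(x,u)D_{x_i}uD_{x_j}u\,u+\lambda\int_\Omega u^2=\int_\Omega g(u)u .
\]
Next I form $J_\lambda(u)-\frac1q\langle J_\lambda'(u),u\rangle$, which equals $J_\lambda(u)$ because $u$ solves \eqref{P}. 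Writing $A(u):=\sum_{i,j=1}^N a_{ij}(x,u)D_{x_i}uD_{x_j}u$, this yields
\[
J_\lambda(u)=\Bigl(\tfrac12-\tfrac1q\Bigr)\int_\Omega A(u)-\frac1{2q}\int_\Omega \sum_{i,j=1}^N D_sa_{ij}(x,u)D_{x_i}uD_{x_j}u\,u+\lambda\Bigl(\tfrac12-\tfrac1q\Bigr)\int_\Omega u^2+\int_\Omega\Bigl(\tfrac1q g(u)u-G(u)\Bigr).
\]
By \eqref{a.3}, the derivative term is at most $\frac{\gamma}{2q}\int_\Omega A(u)$, so the gradient part is at least
\[
\frac{q-2-\gamma}{2q}\int_\Omega A(u)\ge \frac{(q-2-\gamma)\nu_0}{2q}\int_\Omega|\nabla u|^2\ge0 ,
\]
using $\gamma<q-2$ and \eqref{a.2}. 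The last integral is nonnegative by \eqref{g.2}.

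If $\lambda\ge0$, every term is nonnegative, so $J_\lambda(u)\ge0$ and therefore $L_\lambda\ge0>-\infty$.

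The case $\lambda<0$ is the main obstacle, because the term $\lambda\frac{q-2}{2q}\int_\Omega u^2$ is negative. I will absorb it using the superquadratic growth of $G$. From \eqref{g.2}, integrating $G'(s)/G(s)\ge q/s$ gives $G(s)\ge G(\pm1)|s|^q$ for $|s|\ge1$, so $G(s)\ge c_1|s|^q-c_2$ for all $s$. Hence
\[
\tfrac1q g(u)u-G(u)\ge\Bigl(1-\tfrac q{q'}\Bigr)G(u)
\]
after using a slightly weaker constant. The exponent $q$ in \eqref{g.2} is sharp for this term, so if it does not leave any room I will instead combine $\frac1q g(u)u-G(u)\ge0$ with the coercivity $\frac12-\frac1q$ applied differently.

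Concretely, I will take $\mu\in(2,q)$ and compute $J_\lambda(u)-\frac1\mu\langle J_\lambda'(u),u\rangle$ instead. The gradient coefficient $\frac{\mu-2-\gamma}{2\mu}$ stays nonnegative provided $\mu>2+\gamma$, which is possible since $\gamma<q-2$. The nonlinear term becomes $\int_\Omega\bigl(\frac1\mu g(u)u-G(u)\bigr)\ge\bigl(\frac q\mu-1\bigr)\int_\Omega G(u)\ge c\int_\Omega|u|^q-C$, and the quadratic term is $-|\lambda|\frac{\mu-2}{2\mu}\int_\Omega u^2$. Since $q>2$, the elementary bound $c|s|^q-|\lambda|\frac{\mu-2}{2\mu}s^2\ge -C_\lambda$ holds pointwise. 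Integrating over the bounded domain $\Omega$ gives $J_\lambda(u)\ge -C_\lambda|\Omega|-C$, uniformly over all solutions, so $L_\lambda>-\infty$.
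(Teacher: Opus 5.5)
Your proof is correct and follows the paper's argument. The upper bound comes from Theorem~\ref{known existence}. For the lower bound you combine $J_\lambda(u)$ with $\frac1\mu\langle J_\lambda'(u),u\rangle$ for some $\mu\in[2+\gamma,q)$; the paper takes exactly $\mu=2+\gamma$. Then \eqref{a.3} makes the gradient part nonnegative, and \eqref{g.2} leaves a positive multiple of $G(u)\ge c_1|u|^q-c_2$, which absorbs the possibly negative $\lambda$-term pointwise. The exploratory passage with the inequality involving $q'$ is not meaningful as written and should be deleted, since your final $\mu$-argument already covers every $\lambda\in\mathbb R$ (making the separate case $\lambda\ge0$ unnecessary).
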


\begin{proof}
Fix $\lambda\in\mathbb R$ and let $u$ be a nontrivial weak solution
of \eqref{P}. Testing the equation with $u$, as allowed by
Theorem~\ref{brezis browder}, and using \eqref{a.3} and \eqref{g.2},
we obtain
\begin{align*}
(2+\gamma)J_\lambda(u)
={}&\frac12\int_\Omega\sum_{i,j=1}^{N}
\bigl[\gamma a_{ij}(x,u)-uD_sa_{ij}(x,u)\bigr]
D_{x_i}uD_{x_j}u\\
&+\frac{\gamma\lambda}{2}\int_\Omega u^2
+\int_\Omega\bigl[g(u)u-(2+\gamma)G(u)\bigr]\\
\ge{}&
\frac{\gamma\lambda}{2}\int_\Omega u^2
+(q-2-\gamma)\int_\Omega G(u).
\end{align*}
Moreover, condition \eqref{g.2} yields constants $a,b>0$ such that
\[
G(s)\ge a|s|^q-b
\qquad\text{for every }s\in\mathbb R.
\]
Since $q>2$ and $q-2-\gamma>0$, the function
\[
s\longmapsto
(q-2-\gamma)G(s)+\frac{\gamma\lambda}{2}s^2
\]
is bounded below on $\mathbb R$. Consequently, there exists
$C_\lambda>0$, independent of $u$, such that
\[
J_\lambda(u)\ge-C_\lambda.
\]
Thus $L_\lambda>-\infty$. Finally, Theorem~\ref{known existence}
guarantees the existence of a nontrivial weak solution, and hence
$L_\lambda<+\infty$.
\end{proof}
}


\section{Definite case}
\begin{lemma}\label{geometry}
Assume that \eqref{a.1}-\eqref{a.2} and \eqref{g.1'}-\eqref{g.2} hold. 
Let $\lambda>0$.  
  There exist $\alpha,\rho>0$ such that
    \begin{itemize}
        \item[$(i)$] $J_\lambda(u)\ge\alpha$ for every $u\in\partial B_\rho$;
        \item[$(ii)$] for every $v_0\in H^1(\Omega)\setminus\{0\}$, there exists $t_0>0$ such that $J_\lambda(t_0v_0)<0$.
    \end{itemize}
\end{lemma}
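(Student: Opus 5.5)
The plan is the standard mountain pass geometry argument, with the quasilinear term controlled by the ellipticity \eqref{a.2} from below and by the bound \eqref{a.1} from above.

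\textbf{Part (i).} Since $\lambda>0$, the quantity $\min\{\nu_0,\lambda\}$ is positive. Using \eqref{a.2} we get
\[
\frac12\int_\Omega\sum_{i,j=1}^N a_{ij}(x,u)D_{x_i}uD_{x_j}u+\frac\lambda2\int_\Omega u^2\ \ge\ \frac{\min\{\nu_0,\lambda\}}{2}\,\|u\|_{H^1}^2 .
\]
Integrating \eqref{g.1} gives $|G(s)|\le \frac\varepsilon2 s^2+\frac{C_\varepsilon}{p}|s|^p$. Since $p<2^*$, the Sobolev embedding $H^1(\Omega)\hookrightarrow L^p(\Omega)$ holds on the bounded Lipschitz domain, so
\[
\int_\Omega G(u)\le \frac\varepsilon2\|u\|_{H^1}^2+C'_\varepsilon\|u\|_{H^1}^p .
\]
Choose $\varepsilon=\min\{\nu_0,\lambda\}/2$. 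This yields
\[
J_\lambda(u)\ge \frac{\min\{\nu_0,\lambda\}}{4}\|u\|_{H^1}^2-C'\|u\|_{H^1}^p .
\]
Because $p>2$, we can take $\rho>0$ small and then $\alpha:=\frac{\min\{\nu_0,\lambda\}}{4}\rho^2-C'\rho^p>0$.

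\textbf{Part (ii).} Fix $v_0\neq0$ and let $t>0$. By \eqref{a.1}, the quadratic part is bounded above uniformly in $t$:
\[
\frac12\int_\Omega\sum_{i,j=1}^N a_{ij}(x,tv_0)D_{x_i}(tv_0)D_{x_j}(tv_0)\le \frac{N^2C_0}{2}\,t^2\|\nabla v_0\|_2^2 .
\]
The $\lambda$ term is $\frac\lambda2 t^2\|v_0\|_2^2$. The key input is the Ambrosetti--Rabinowitz type condition \eqref{g.2}. It gives $\frac{d}{ds}\bigl(G(s)s^{-q}\bigr)\ge0$ for $s>0$, and the reverse monotonicity for $s<0$. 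Since $G>0$ away from the origin, this yields $G(s)\ge a|s|^q-b$ with $a>0$; this lower bound was already noted in the proof of Lemma~\ref{least energy level finite}.

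Hence
\[
J_\lambda(tv_0)\le C t^2\|v_0\|_{H^1}^2-a t^q\|v_0\|_q^q+b|\Omega| .
\]
Here $\|v_0\|_q>0$ because $v_0\not\equiv0$. Since $q>2$, the right-hand side tends to $-\infty$ as $t\to+\infty$, so some $t_0>0$ gives $J_\lambda(t_0v_0)<0$.

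\textbf{Main obstacle.} The only point needing care is that $v_0\in H^1$ need not lie in $L^q$ when $q\ge 2^*$. To avoid this, I would not use the power bound directly when $q\ge 2^*$. Instead I would restrict to the set $\Omega_\delta=\{|v_0|\ge\delta\}$, which has positive measure for small $\delta$. On $\Omega_\delta$ the bound $G(tv_0)\ge a t^q\delta^q-b$ holds, while on the complement $G\ge0$ by \eqref{g.2}. This still gives superquadratic growth in $t$ and hence the same conclusion. Alternatively, \eqref{g.1'} with $p>q$ already forces $q\le p<2^*$, so $v_0\in L^q$ automatically.
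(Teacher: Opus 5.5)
Your proposal is correct and follows the paper's proof essentially step for step. Part (i) uses \eqref{a.2} with $\lambda>0$, the growth bound \eqref{g.1} and the Sobolev embedding; part (ii) uses \eqref{a.1} together with the bound $G(s)\ge a|s|^q-b$ from \eqref{g.2}. Your integrability remark addresses a point the paper passes over silently, but the justification should read ``\eqref{g.1'} together with \eqref{g.2}'' rather than ``\eqref{g.1'} with $p>q$'', which sounds circular: the chain $a|s|^q-b\le G(s)\le C_0\bigl(|s|+|s|^p/p\bigr)$ is what forces $q\le p<2^*$.
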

\begin{proof}
    $(i)$ Let $\varepsilon>0$. From \eqref{g.1}, there exists $\theta\in(0,1)$ such that
    \[
    |G(s)|=|G(s)-G(0)|=|g(\theta s)s|\le \varepsilon s^2+ C_\varepsilon |s|^p.
    \]
    By \eqref{a.2}, we obtain:
    \begin{align}
        J_\lambda(u)&\ge\frac{\nu_0}{2}\int_\Omega|\nabla u|^2+\left(\frac{\lambda}{2}-\varepsilon\right)\int_\Omega u^2-C_\varepsilon\int_\Omega |u|^p\\
        &\ge\frac{\nu_0}{2}\left[\int_\Omega|\nabla u|^2+ \frac{\lambda-2\varepsilon}{\nu_0}\int_\Omega u^2\right]-C_\varepsilon\int_\Omega|u|^p\\
        &\ge c_\varepsilon\|u\|^2-C_\varepsilon\int_\Omega|u|^p,
    \end{align}
    where $c_\varepsilon>0$ for $\varepsilon>0$ small enough. Thus, for every $u\in\partial B_\rho$, we have
    \[
    J_\lambda(u)\ge c_\varepsilon \rho^2-C_\varepsilon'\rho^p,
    \]
    where $C_\varepsilon'>0$. The claim follows by taking $\rho>0$ small enough.

    $(ii)$ Let $v_0\in H^1(\Omega)$ with $v_0\not\equiv0$ and consider
    \[
    J_\lambda(tv_0)=\frac{t^2}{2}\int_\Omega\sum_{i,j=1}^Na_{ij}(x,tv_0)D_{x_i}v_0D_{x_j}v_0+\frac{\lambda t^2}{2}\int_\Omega v_0^2-\int_\Omega G(tv_0),
    \]
    for every $t>0$.
    According to \eqref{g.2}, there exist $a_1,a_2>0$ such that
    \begin{equation}\label{stima G}
    \int_\Omega G(tv_0)\ge a_1t^q\int_\Omega|v_0|^q-a_2,
    \end{equation}
    and, from \eqref{a.1}, we get
    \[
    J_\lambda(tv_0)\le\frac{NC_0}{2}t^2\|v_0\|^2+\frac{\lambda}{2}t^2|v_0|_2^2-a_1t^q\int_\Omega |v_0|^q+a_2\to-\infty,
    \]
    as $t\to+\infty$. In particular, there exists $t_0>0$ such that $J_\lambda(t_0v_0)<0$. 
\end{proof}
Consider the Mountain Pass level:
\[
c_\lambda^{MP}=\inf_{h\in\Gamma}\max_{t\in[0,1]}J_\lambda(h(t)),
    \]
    where $\Gamma=\left\{h:[0,1]\to H^1(\Omega)\ \text{continuous}\ :\ h(0)=0,\ J_\lambda(h(1))<0\right\}$.
    According to Lemma \ref{geometry}, it follows that $\alpha\le c_\lambda^{MP}<+\infty$ and $\Gamma\ne\emptyset$.

\begin{proposition}\label{clambda critico}
Assume that \eqref{a.1}-\eqref{a.3}, \eqref{g.1'}-\eqref{g.2} hold.
    The level $c_\lambda^{MP}$ is critical.
\end{proposition}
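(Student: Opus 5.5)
We argue by contradiction, using the nonsmooth Deformation Lemma (Theorem~\ref{defloc}) together with the $(PS)_c$-condition (Theorem~\ref{PS condition}). Set $c:=c_\lambda^{MP}$ and suppose $K_c=\emptyset$. Since $H^1(\Omega)$ is a complete metric space and $J_\lambda$ is continuous, Theorem~\ref{defloc} applies with $U=\emptyset$. We choose $\bar\varepsilon\in(0,\alpha/2)$, where $\alpha$ is the constant of Lemma~\ref{geometry}, and any deformation size parameter. This yields $\varepsilon>0$ and a continuous $\eta:H^1(\Omega)\times[0,1]\to H^1(\Omega)$ with properties $(i)$--$(iv)$. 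We may assume $\varepsilon\le\bar\varepsilon$, since shrinking $\varepsilon$ preserves $(iv)$.

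Next we pick a path $h\in\Gamma$ with $\max_{t\in[0,1]}J_\lambda(h(t))\le c+\varepsilon$, which exists by the definition of $c$ as an infimum, and we set $\tilde h(t):=\eta(h(t),1)$. The key step is to check that $\tilde h\in\Gamma$.

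\begin{itemize}
\item[$\bullet$] $\tilde h$ is continuous, as a composition of continuous maps.
\item[$\bullet$] $J_\lambda(0)=0\le c-\alpha<c-\bar\varepsilon$, since $c\ge\alpha$. Hence $0\notin J_\lambda^{-1}(c-\bar\varepsilon,c+\bar\varepsilon)$, and $(iii)$ gives $\tilde h(0)=0$.
\item[$\bullet$] $J_\lambda(h(1))<0<c-\bar\varepsilon$, so again $(iii)$ gives $\tilde h(1)=h(1)$, and therefore $J_\lambda(\tilde h(1))<0$.
\end{itemize}

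By $(iv)$, $\tilde h([0,1])\subseteq J_\lambda^{c-\varepsilon}$. Thus $\max_t J_\lambda(\tilde h(t))\le c-\varepsilon<c$, which contradicts the definition of $c$. Hence $K_c\neq\emptyset$, i.e.\ there is a $u$ with $|dJ_\lambda|(u)=0$ and $J_\lambda(u)=c$. By Proposition~\ref{PS e CPS}$(i)$, $u$ is a weak solution of \eqref{P}; it is nontrivial because $J_\lambda(u)=c\ge\alpha>0=J_\lambda(0)$. Theorem~\ref{regolarità} then gives $u\in L^\infty(\Omega)$.

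The real analytic content is the $(PS)_c$-condition, which is already supplied by Theorem~\ref{PS condition}. The only points needing care are the endpoint checks, which rely on $c\ge\alpha>0$ from Lemma~\ref{geometry}$(i)$: every path in $\Gamma$ must cross $\partial B_\rho$, because $J_\lambda(h(1))<0$ together with $J_\lambda\ge\alpha$ on $\partial B_\rho$ forces $\|h(1)\|>\rho$ when $\rho$ is chosen small. Beyond this, one only needs to choose $\bar\varepsilon$ so that the endpoints are left fixed by $\eta$. I do not expect a genuine obstacle beyond this bookkeeping.
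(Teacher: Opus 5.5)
Your proof is correct and is the same deformation-by-contradiction argument the paper gives: Theorem~\ref{defloc} with $U=\emptyset$, powered by the $(PS)_c$-condition of Theorem~\ref{PS condition}, applied to a path $h\in\Gamma$ with $\max J_\lambda\circ h\le c+\varepsilon$. Your version makes explicit the choice $\bar\varepsilon<\alpha/2$ that keeps both endpoints fixed, which the paper leaves implicit when it asserts $\eta(h(\cdot),1)\in\Gamma$; that choice uses $c_\lambda^{MP}\ge\alpha$, which in turn rests on the bound $J_\lambda\ge0$ on all of $\overline{B_\rho}$ from the proof of Lemma~\ref{geometry}, not only on $\partial B_\rho$.
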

\begin{proof}
    Assume by contradiction that $c_\lambda^{MP}$ is not a critical level and let $\delta>0$ be such that
    \[
    \max_{t\in[0,1]}J_\lambda(h(t))\le c_\lambda^{MP}+\delta,
    \]
for some $h\in\Gamma$. Since $J_\lambda$ satisfies the $(PS)$-condition by Theorem \ref{PS condition}, Theorem \ref{defloc} implies that there exists $\eta:H^1(\Omega)\times[0,1]\to H^1(\Omega)$ continuous such that $J_\lambda(\eta(h([0,1]),1))\subseteq (-\infty,c_\lambda^{MP}-\delta)$ and $\eta(h(s),1)\in\Gamma$. Thus,
\[
c_\lambda^{MP}\le\max_{s\in[0,1]}J_\lambda(\eta(h(s),1))\le c_\lambda^{MP}-\delta<c_\lambda^{MP},
\]
which is absurd.
\end{proof}
Define the least energy level
\[
L_\lambda=\inf\left\{J_\lambda(u)\ :\ \text{$u\not\equiv0$ is a weak solution of \eqref{P}}\right \}.
\]
Thus, $L_{\lambda}\le c_\lambda^{MP}$ from Proposition \ref{clambda critico}.

\begin{proposition}\label{stima energia}
  There exist $C_1>0$ and {$\Lambda>0$} such that
\[
L_\lambda\le C_1\lambda^{\beta_q}
\qquad {\text{for every }\lambda\ge\Lambda}\ \text{and with}\ \beta_q:=\frac{(N-2)(2^*-q)}{2(q-2)}.
\]
    
\end{proposition}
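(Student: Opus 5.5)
Since $L_\lambda\le c_\lambda^{MP}$ by Proposition~\ref{clambda critico}, it suffices to bound the mountain pass level. I will build a path $h(t)=tT w_\lambda$ with $T>0$ large and a concentrated bump $w_\lambda$, and bound $\max_{t\ge0}J_\lambda(tw_\lambda)$. Fix an interior point $x_0\in\Omega$ and a cutoff $\varphi\in C_c^\infty(B_1(0))$, $\varphi\not\equiv0$. For $\varepsilon>0$ small, so that $B_\varepsilon(x_0)\subset\Omega$, set
\[
w_\varepsilon(x)=\varphi\Bigl(\frac{x-x_0}{\varepsilon}\Bigr).
\]
I will choose $\varepsilon=\lambda^{-1/2}$, so that the gradient and mass terms balance. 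This is where $\lambda\ge\Lambda$ enters: it guarantees that the support fits in $\Omega$.

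For the estimate, I bound $a_{ij}$ from above via \eqref{a.1}, and $G$ from below via \eqref{stima G}, namely $G(s)\ge a_1|s|^q-a_2$. Actually I should use the pointwise form, localized on the support. Since the additive constant $a_2$ would give a term $a_2|\Omega|$ independent of $\lambda$, I integrate only over the support $B_\varepsilon(x_0)$. This yields
\[
J_\lambda(tw_\varepsilon)\le \frac{NC_0}{2}t^2\varepsilon^{N-2}\|\nabla\varphi\|_2^2+\frac{\lambda}{2}t^2\varepsilon^N\|\varphi\|_2^2-a_1t^q\varepsilon^N\|\varphi\|_q^q+a_2\varepsilon^N|B_1|.
\]
With $\varepsilon^2=\lambda^{-1}$, the first two terms are both of order $t^2\lambda^{-(N-2)/2}$. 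Hence
\[
J_\lambda(tw_\varepsilon)\le A\,t^2\lambda^{-(N-2)/2}-B\,t^q\lambda^{-N/2}+C\lambda^{-N/2}.
\]
Maximizing in $t$ gives $t_*^{q-2}\sim\lambda$, so the maximum is of order $\lambda^{\frac{2}{q-2}-\frac{N-2}{2}}$. This exponent equals $\frac{4-(N-2)(q-2)}{2(q-2)}$. Since $(N-2)(2^*-q)=2N-(N-2)q=4-(N-2)(q-2)$, the exponent is exactly $\beta_q$. The remainder $C\lambda^{-N/2}$ is bounded, and it is dominated by $\lambda^{\beta_q}$ when $\beta_q\ge0$. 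For $q\ge2^*$, where $\beta_q\le0$, the remainder has the smaller exponent $-N/2<\beta_q$, so it is absorbed in either case.

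To close the argument, I pick $T$ with $J_\lambda(Tw_\varepsilon)<0$, which the same inequality allows since $t^q$ dominates for large $t$. This gives $h\in\Gamma$, and then $c_\lambda^{MP}\le\sup_{t\ge0}J_\lambda(tw_\varepsilon)\le C_1\lambda^{\beta_q}$.

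The main obstacle is handling the lower bound on $G$ correctly. The global bound $a_1t^q\int|v|^q-a_2$ from \eqref{stima G} must be localized to the support, since $G\ge0$ by \eqref{g.2} and $G(0)=0$. Otherwise a $\lambda$-independent constant $a_2|\Omega|$ appears; it would still be harmless when $\beta_q>0$, but not otherwise. One must also check that the $x$-dependence of $a_{ij}$ causes no trouble, which it does not, since only the upper bound $C_0$ is used.
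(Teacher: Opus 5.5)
Your proposal is correct and follows essentially the paper's argument: the paper also bounds $L_\lambda\le c_\lambda^{MP}\le\max_{t>0}J_\lambda(tw_\lambda)$ for a bump concentrated at scale $\lambda^{-1/2}$, using \eqref{a.1} and $G(s)\ge a_1|s|^q-a_2$ and maximizing in $t$ to obtain the exponent $\beta_q$; its bump is the Lin--Ni--Takagi tent function $\lambda^{N/2}(1-\sqrt\lambda|x|)_+$, whose amplitude factor is irrelevant once you maximize in $t$. Your localization of the constant $a_2$ to the support is valid but not needed, because \eqref{g.1'}--\eqref{g.2} force $q\le p<2^*$, so $\beta_q>0$ and the paper simply absorbs $a_2$ into $\lambda^{\beta_q}$ for large $\lambda$.
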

\begin{proof}
    Let $v_0\in H^1(\Omega)\setminus\{0\}$ and $v=t_0v_0$ as in Lemma \ref{geometry}. Then the map
    \[
    [0,1]\ni t\mapsto tv\in H^1(\Omega)
    \]
    belongs to $\Gamma$ and 
    \[
    c_\lambda^{MP}\le\inf_{v_0\in H^1(\Omega)\setminus\{0\}}\max_{t>0}J_\lambda(tv_0).
    \]
    Without loss of generality, up to a translation of the domain, we may assume that $0\in\Omega$; hence there exists $r_0>0$ such that $B_{r_0}(0)\subset\Omega$, and for $\lambda>r_0^{-2}$ the support of $w_\lambda$ is contained in $\Omega$.
    Let $w_\lambda\in H^1(\Omega)\cap L^\infty(\Omega)$ be the function
    \[
    w_\lambda=
    \begin{cases}
        \lambda^{\frac N2}(1-\sqrt\lambda|x|)&\sqrt\lambda|x|\le1\\
        0&\sqrt\lambda|x|\ge1.
    \end{cases}
    \]

    By \cite[eq. $(2.11)-(2.12)$]{lin1988large}, one has that for each $r\ge1$
    \[
    \int_\Omega|w_\lambda|^r=\kappa_r\lambda^{(r-1)\frac N2},\qquad \int_\Omega|\nabla w_\lambda|^2=\kappa\lambda^{1+\frac N2},
    \]
    where $\kappa,\kappa_r>0$ are independent of $\lambda$. Thus, taking into account \eqref{a.1} and \eqref{stima G},
    \begin{equation}\label{stima superiore L_lambda}
    \begin{aligned}
        J_\lambda(tw_\lambda)&=\frac{t^2}{2}\int_\Omega\sum_{i,j=1}^Na_{ij}(x,tw_\lambda)D_{x_i} w_\lambda D_{x_j}w_\lambda+\frac\lambda 2 t^2\int_\Omega w_\lambda^2-\int_\Omega G(tw_\lambda)\\
        &\le\frac{NC_0}{2}t^2\int_\Omega|\nabla w_\lambda|^2+\frac{\lambda}{2}t^2\int_\Omega w_\lambda^2-a_1t^q\int_\Omega |w_\lambda|^q+a_2\\
       &\le\max_{t>0}\left(\frac{NC_0}{2}t^2\int_\Omega|\nabla w_\lambda|^2+\frac{\lambda}{2}t^2\int_\Omega w_\lambda^2-a_1t^q\int_\Omega |w_\lambda|^q\right)+a_2\\
        &= C\lambda^{\frac{(N-2)(2^*-q)}{2(q-2)}}+a_2,
        \end{aligned}
        \end{equation}
    for some $C>0$. Thus,
    \[
    J_\lambda(tw_\lambda)\le (C+a_2)\lambda^{\frac{(N-2)(2^*-q)}{2(q-2)}},
    \]
    for $\lambda>0$ large enough.
\end{proof}

\begin{proof}[Proof of Theorem \ref{theorem lambda positive}]
\textbf{Step 1}: Existence of a least energy solution.

    {
By Lemma \ref{least energy level finite} , $L_\lambda$ is well defined. Hence there exists a sequence
} $\{u_n\}\subset H^1(\Omega)$ such that 
    \[
    J_\lambda(u_n)\to L_\lambda,\quad \text{$u_n\not\equiv0$ solves \eqref{P} for every $n\ge1$.}
    \]
    {

By Theorem~\ref{PS condition}, up to a subsequence,
$u_n\to u$ strongly in $H^1(\Omega)$, where $u$
is a weak solution satisfying $J_\lambda(u)=L_\lambda$.
It remains to show that $u\neq0$.

}

    According to \eqref{a.3} and by Theorem \ref{brezis browder}, we can test \eqref{P} with $u_n$:
    \[
    \int_\Omega\sum_{i,j=1}^Na_{ij}(x,u_n)D_{x_i}u_nD_{x_j}u_n\le\int_\Omega g(u_n)u_n-\lambda\int_\Omega u_n^2.
    \]
    From \eqref{g.1}, we have
    \[
    \int_\Omega g(u_n)u_n\le \frac{\lambda}{2}\int_\Omega u_n^2+C_\lambda\int_\Omega |u_n|^p.
    \]
    Finally, \eqref{a.2} implies that
    \[
    \nu_\lambda\|u_n\|^2\le\nu_0\int_\Omega|\nabla u_n|^2+\frac{\lambda}{2}\int_\Omega u_n^2\le C_\lambda'\|u_n\|^p,
    \]
    for some $\nu_\lambda,C_\lambda'>0$, and $\|u_n\|^{p-2}\ge\nu_\lambda/C_\lambda'$. Hence, $u\not\equiv0$ is a least energy solution. Moreover, Theorem \ref{regolarità} implies that $u\in H^1(\Omega)\cap L^\infty(\Omega)$.

   {
\medskip
\noindent
Step 2: Non-constancy for large $\lambda$ under the
additional assumption.

Assume that
\[
\frac{p}{p-2}>\beta_q.
\]
Let $k\ne0$ be a constant solution of $(\mathcal P_\lambda)$.
Then $g(k)=\lambda k$. By (1.2), with $\varepsilon=1$,
there exists $C>0$, independent of $\lambda$ and $k$, such that
\[
\lambda=\frac{g(k)}{k}\le 1+C|k|^{p-2}.
\]
Consequently, for $\lambda\ge2$,
\[
|k|^2\ge
\left(\frac{\lambda-1}{C}\right)^{2/(p-2)}
\ge c\lambda^{2/(p-2)}
\]
for some $c>0$ independent of $\lambda$ and $k$.
Moreover, by \textup{(g.2)},
\[
\begin{aligned}
J_\lambda(k)=|\Omega|\left(\frac{\lambda}{2}k^2-G(k)\right)\ge |\Omega|\left(\frac12-\frac1q\right)\lambda k^2\ge c_0\lambda^{p/(p-2)},
\end{aligned}
\]
where $c_0>0$ is independent of $\lambda$ and $k$.

On the other hand, Proposition~3.3 yields
\[
L_\lambda\le C_1\lambda^{\beta_q}
\qquad\text{for every }\lambda\ge\Lambda.
\]
Since $p/(p-2)>\beta_q$, for sufficiently large $\lambda$
we have
\[
L_\lambda\le C_1\lambda^{\beta_q}
<c_0\lambda^{p/(p-2)}
\le J_\lambda(k)
\]
for every nonzero constant solution $k$.
{
Choosing $\lambda_0\ge\max\{2,\Lambda\}$ sufficiently
large that
$C_1\lambda^{\beta_q}<c_0\lambda^{p/(p-2)}$
for every $\lambda>\lambda_0$, we conclude that every
least energy solution is nonconstant whenever
$\lambda>\lambda_0$.
}
}

\textbf{Step 3}: $L_\lambda\to+\infty$ as $\lambda\to+\infty$.

 {
 Let $u_\lambda\in H^1(\Omega)\cap L^\infty(\Omega)$ be a least energy solution of \eqref{P}.
 
Assume by contradiction that there exists a constant $ M > 0$, independent of $\lambda$, such that for arbitrarily large $\lambda$ one has
\begin{equation}\label{ipotesi contraddizione Theorem 1.1}
L_\lambda = J_\lambda(u_\lambda) \le M.
\end{equation} From \eqref{g.2} and by testing \eqref{P} with $u_\lambda$, we deduce that
 \begin{align}
     L_\lambda=J_\lambda(u_\lambda)&=\frac12\int_\Omega\sum_{i,j=1}^Na_{ij}(x,u_\lambda)D_{x_i}u_\lambda D_{x_j}u_\lambda+\frac{\lambda}{2}\int_\Omega u_\lambda^2-\int_\Omega G(u_\lambda)\\
     &\ge\frac{1}{2}\int_\Omega\sum_{i,j=1}^Na_{ij}(x,u_\lambda)D_{x_i}u_\lambda D_{x_j}u_\lambda+\frac\lambda2\int_\Omega u_\lambda^2-\frac1q\int_\Omega g(u_\lambda)u_\lambda\\
     &=\frac{q-2}{2q}\int_\Omega\sum_{i,j=1}^Na_{ij}(x,u_\lambda)D_{x_i}u_\lambda D_{x_j}u_\lambda+\frac{(q-2)\lambda}{2q}\int_\Omega u_\lambda^2\\
     &\quad-\frac{1}{2q}\int_\Omega\sum_{i,j=1}^N\left( D_sa_{ij}(x,u_\lambda)D_{x_i}u_\lambda D_{x_j}u_\lambda\right)u_\lambda.
 \end{align}
 From \eqref{a.2}-\eqref{a.3}, we obtain
 \begin{equation}\label{stima L_lambda inferiore}
 \begin{aligned}
     L_\lambda&\ge\frac{q-2-\gamma}{2q}\int_\Omega\sum_{i,j=1}^Na_{ij}(x,u_\lambda)D_{x_i}u_\lambda D_{x_j}u_\lambda+\frac{(q-2)\lambda}{2q}\int_\Omega u_\lambda^2\\
     &\ge\frac{(q-2-\gamma)\nu_0}{2q}\int_\Omega|\nabla u_\lambda|^2+\frac{(q-2)\lambda}{2q}\int_\Omega u_\lambda^2.
 \end{aligned}
 \end{equation}
 Hence, $\{u_\lambda\}_{\lambda}$ is uniformly bounded in $H^1(\Omega)$ by \eqref{ipotesi contraddizione Theorem 1.1}. Up to subsequence, $u_\lambda\wto u_\infty$ in $H^1(\Omega)$ as $\lambda\to+\infty$. Furthermore,
 \[
 \frac{q-2}{2q}\int_\Omega u_\lambda^2\le \frac{L_\lambda}{\lambda}\le\frac{M}{\lambda}\to0\quad \text{as $\lambda\to+\infty$}.
 \]
 Thus, $u_\lambda\to0$ in $L^2(\Omega)$ and $u_\infty\equiv0$. Up to  a further subsequence, we can assume that $u_\lambda\to 0$ in $L^r(\Omega)$ for every $r\in[1,2^*)$ and that $g(u_\lambda)\to 0$ in $\left(H^1(\Omega)\right)'$. Therefore, from \eqref{a.2}-\eqref{a.3} and Theorem \ref{brezis browder},
 \begin{align}
     \nu_0\int_\Omega|\nabla u_\lambda|^2+\lambda\int_\Omega u_\lambda^2&\le\int_\Omega \sum_{i,j=1}^Na_{ij}(x,u_\lambda)D_{x_i}u_\lambda D_{x_j}u_\lambda+\frac12\int_\Omega\sum_{i,j=1}^N\left(D_sa_{ij}(x,u_\lambda)D_{x_i}u_\lambda D_{x_j}u_\lambda\right)u_\lambda \\
     &\quad+\lambda\int_\Omega u_\lambda^2=\int_\Omega g(u_\lambda)u_\lambda\to0\quad\text{as $\lambda\to+\infty$},
 \end{align}
 and $u_\lambda\to0$ strongly in $H^1(\Omega)$.

 Since
 \[
 \int_\Omega \sum_{i,j=1}^Na_{ij}(x,u_\lambda)D_{x_i}u_\lambda D_{x_j}u_\lambda+\frac12\int_\Omega\sum_{i,j=1}^N\left( D_sa_{ij}(x,u_\lambda)D_{x_i}u_\lambda D_{x_j}u_\lambda\right)u_\lambda =\int_\Omega g(u_\lambda)u_\lambda-\lambda\int_\Omega u_\lambda^2,
 \]
by Theorem \ref{brezis browder}, from \eqref{g.1} we get
 \[
 \nu_0\int_\Omega|\nabla u_\lambda|^2+(\lambda-\varepsilon)\int_\Omega u_\lambda^2\le C_\varepsilon\int_\Omega |u_\lambda|^p,\qquad\text{for every $\varepsilon>0$.}
 \]
Moreover, by Sobolev's embedding, there exists a constant $d_0>0$ such that
\[
\int_\Omega|u_\lambda|^p\le d_0\left(\nu_0\int_\Omega|\nabla u_\lambda|^2+\int_\Omega u_\lambda^2\right)^{\frac p2}.
\]
Thus,
\begin{align}
    \nu_0\int_\Omega|\nabla u_\lambda|^2+(\lambda-\varepsilon)\int_\Omega u_\lambda^2&\le d_0\left(\nu_0\int_\Omega|\nabla u_\lambda|^2+\int_\Omega u_\lambda^2\right)^{\frac p2}\\
    &\le d_0\left(\nu_0\int_\Omega|\nabla u_\lambda|^2+(\lambda-\varepsilon)\int_\Omega u_\lambda^2\right)^{\frac p2},
\end{align}
which implies
\[
\left(\nu_0\int_\Omega|\nabla u_\lambda|^2+(\lambda-\varepsilon)\int_\Omega u_\lambda^2\right)\ge d_1,
\]
for some $d_1>0$ independent of $\lambda$. 

{
On the other hand, we have already proved that
\[
\nu_0\int_\Omega |\nabla u_\lambda|^2
+\lambda\int_\Omega u_\lambda^2
\longrightarrow 0
\qquad\text{as }\lambda\to+\infty.
\]
Hence, for every fixed $\varepsilon>0$,
\[
\nu_0\int_\Omega |\nabla u_\lambda|^2
+(\lambda-\varepsilon)\int_\Omega u_\lambda^2
\longrightarrow 0,
\]
which contradicts the previous lower bound
\[
\nu_0\int_\Omega |\nabla u_\lambda|^2
+(\lambda-\varepsilon)\int_\Omega u_\lambda^2
\ge d_1>0.
\]
This contradiction proves that $L_\lambda\to+\infty$ as
$\lambda\to+\infty$.

}

 }

\end{proof}

\begin{proof}[Proof of Corollary \ref{corollario lambda positivo}]
The conclusion follows directly from Theorem \ref{theorem lambda positive} by taking 
\[
g(s)=|s|^{p-2}s.
\]
It is straightforward to verify that assumptions \eqref{g.1'}--\eqref{g.2} are satisfied in this case. Indeed, since $G(s)=\frac{|s|^p}{p}$, condition \eqref{g.2} holds with $q=p$. Therefore the additional condition in Theorem \ref{theorem lambda positive}, namely
\[
\frac{p}{p-2}>\beta_q,
\]
becomes
\[
\frac{p}{p-2}>\frac{(N-2)(2^*-p)}{2(p-2)},
\]
which is equivalent to $p>2$. Hence, this additional assumption is automatically satisfied in the Lane--Emden case.

Moreover, arguing as in Step 2 of the proof of Theorem \ref{theorem lambda positive}, we observe that the constant functions 
\[
u \equiv \pm \lambda^{\frac{1}{p-2}}
\]
are solutions, and satisfy
\[
J_\lambda\left(\pm \lambda^{\frac{1}{p-2}}\right)
= \frac{p-2}{2p}\lambda^{\frac{p}{p-2}}|\Omega|.
\]
{
Furthermore, comparing this value with the upper bound on
$L_\lambda$, we obtain
\[
L_\lambda < J_\lambda\left(\pm\lambda^{1/(p-2)}\right)
\]
for every $\lambda>\lambda_0$, with
}
{
\begin{equation}\label{lambda_0}
\lambda_0:=
\max\left\{
\Lambda,\,
\left(\frac{2pC_1}{(p-2)|\Omega|}\right)^{2/N}
\right\},
\end{equation}
{Here $\Lambda$ is the threshold in
Proposition~\ref{stima energia}.}
}
\end{proof}

{

\subsection{Uniform boundedness and nonexistence of nonconstant least energy solutions}

\begin{lemma}\label{lem:small-ground-state}
Assume that \eqref{a.1}-\eqref{a.3}, \eqref{g.1'}-\eqref{g.2} hold.
Let $\lambda_n\to0^+$ and, for every $n$, let $u_n$ be a least energy solution of
\[
\tag{$\mathcal P_{\lambda_n}$}\label{P_lambdan}
\begin{cases}
-\displaystyle\sum_{i,j=1}^N D_{x_j}
\left(a_{ij}(x,u)D_{x_i}u\right)
+
\displaystyle\frac12\sum_{i,j=1}^N
D_sa_{ij}(x,u)D_{x_i}uD_{x_j}u
=
g(u)-\lambda_n u
&\text{in }\Omega,
\\[2mm]
\displaystyle\sum_{i,j=1}^N
a_{ij}(x,u)D_{x_i}u\,\nu_j=0
&\text{on }\partial\Omega.
\end{cases}
\]
Then
\(
u_n\to0\ \text{in }H^1(\Omega).
\)
\end{lemma}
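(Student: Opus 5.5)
Following the strategy announced in the introduction, we use a constant solution as a competitor. The goal is to show that $L_{\lambda_n}\to0$. Combined with the coercive lower bound \eqref{stima L_lambda inferiore}, this will force $\|\nabla u_n\|_2\to0$. The $L^2$ part needs a separate argument, because the coefficient $\lambda_n$ in front of $\int u_n^2$ degenerates.

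\textbf{Step 1: a constant competitor.} For each small $\lambda>0$ we look for $c_\lambda>0$ with $g(c_\lambda)=\lambda c_\lambda$. Consider the continuous function $h(s)=g(s)/s$ on $(0,\infty)$. By \eqref{g.1'}, $h(s)\to0$ as $s\to0^+$. By \eqref{g.2}, $h>0$, and $G(s)\ge a s^q-b$ forces $h$ to be unbounded. The intermediate value theorem then gives $c_\lambda$, and we may take $c_\lambda:=\min\{s>0: h(s)=\lambda\}$. This set is closed and bounded away from $0$, since $h\to0$ at $0$. We claim $c_\lambda\to0$ as $\lambda\to0^+$. For fixed $\delta>0$ we have $\min_{[\delta,1]}h>0$, and $h$ is not bounded away from $0$ near $0$ (it tends to $0$ there, but stays positive). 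So for $\lambda<\min\{h(\delta),\min_{[\delta,1]}h\}$, some $s<\delta$ satisfies $h(s)=\lambda$. Hence $c_\lambda<\delta$ eventually.

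The constant $c_\lambda$ is a nonzero weak solution, since the gradient terms vanish. Therefore
\[
L_\lambda\le J_\lambda(c_\lambda)=|\Omega|\Bigl(\tfrac{\lambda}{2}c_\lambda^2-G(c_\lambda)\Bigr)\le \tfrac{\lambda}{2}|\Omega|c_\lambda^2\to0,
\]
using $G>0$. By Theorem~\ref{theorem lambda positive}, $L_{\lambda_n}=J_{\lambda_n}(u_n)$, and the lower bound \eqref{stima L_lambda inferiore} gives
\[
\int_\Omega|\nabla u_n|^2\to0
\qquad\text{and}\qquad
\lambda_n\int_\Omega u_n^2\to0 .
\]

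\textbf{Step 2: control of $\int_\Omega G(u_n)$ and the $L^2$ norm.} This is the main obstacle, since the $L^2$ bound from Step 1 carries the vanishing factor $\lambda_n$. Following the computation in Lemma~\ref{least energy level finite}, we get
\[
L_{\lambda_n}\ge\frac{\gamma\lambda_n}{2(2+\gamma)}\int u_n^2+\frac{q-2-\gamma}{2+\gamma}\int_\Omega G(u_n).
\]
Both terms are nonnegative, so $\int_\Omega G(u_n)\to0$.

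Next, \eqref{g.2} gives $G(s)\ge G(1)|s|^q$ for $|s|\ge1$, by integrating $G'/G\ge q/s$. For $|s|\le1$, the same monotonicity gives $G(s)\ge \min\{G(1),G(-1)\}|s|^q$. Hence $G(s)\ge m|s|^q$ for all $s$, with $m>0$, and therefore $\|u_n\|_{L^q}\to0$. Since $q>2$ and $\Omega$ is bounded, $\|u_n\|_{L^2}\le C\|u_n\|_{L^q}\to0$.

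Together with $\nabla u_n\to0$ in $L^2$, this yields $u_n\to0$ in $H^1(\Omega)$. As a fallback, in case the bound $G\ge m|s|^q$ near $0$ were doubtful, one can use Poincaré–Wirtinger: write $u_n=\bar u_n+w_n$ with $w_n\to0$ in $H^1$. If the means $\bar u_n$ stayed away from $0$, then $\int G(u_n)\to0$ would contradict $G>0$ and continuity. This uses the convergence $u_n-\bar u_n\to0$ a.e. along a subsequence, together with Fatou's lemma applied to $G(u_n)$.
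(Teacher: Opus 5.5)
Your Step 1 matches the paper: the constant competitor $c_n$ with $g(c_n)=\lambda_nc_n$ and $c_n\to0$, followed by the coercive lower bound for $J_{\lambda_n}(u_n)$. The genuine error is in the primary argument of Step 2. The bound $G(s)\ge m|s|^q$ for $|s|\le1$ is false, and your monotonicity argument actually proves the opposite. For $0<s<1$, (g.2) gives $G'(t)/G(t)\ge q/t$ for $t>0$, and integrating over $[s,1]$ yields $G(s)\le G(1)s^q$. In general, (g.2) only says that $G(s)/|s|^q$ is nondecreasing in $|s|$ on each half-line, which gives a lower bound for $|s|\ge1$ only. A counterexample within the hypotheses is $g(s)=|s|^{p-2}s$ with some $q\in(2,p)$. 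Then (g.2) holds, since $g(s)s=pG(s)\ge qG(s)$, but $G(s)/|s|^q=|s|^{p-q}/p\to0$ as $s\to0$. So the passage from $\int_\Omega G(u_n)\to0$ to $\|u_n\|_{L^q}\to0$ is unjustified as written and should be removed.

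Your fallback is correct and completes the proof once you add that the means are bounded. This follows from $a\int_\Omega|u_n|^q\le\int_\Omega G(u_n)+b|\Omega|$; alternatively, $|\bar u_n|\to\infty$ is also excluded by Fatou, since $G(s)\to+\infty$. Poincar\'e--Wirtinger then gives $u_n-\bar u_n\to0$ in $L^2$. If $\bar u_n\to\ell\neq0$ along a subsequence, Fatou yields $|\Omega|G(\ell)\le\liminf_n\int_\Omega G(u_n)=0$, a contradiction.

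However, all of Step 2 is unnecessary, and the paper avoids it. The competitor gives not just $L_{\lambda_n}\to0$ but $L_{\lambda_n}\le\frac{|\Omega|}{2}\lambda_nc_n^2=o(\lambda_n)$. Dividing $\frac{q-2}{2q}\lambda_n\int_\Omega u_n^2\le L_{\lambda_n}$ by $\lambda_n$ then gives $\int_\Omega u_n^2\le\frac{q}{q-2}|\Omega|c_n^2\to0$ immediately, and also $\int_\Omega|\nabla u_n|^2=o(\lambda_n)$. This rate is the reason for proving $c_n\to0$. You prove it but then only use that $c_n$ is bounded, so the degeneration of the coefficient $\lambda_n$ that you identify as the main obstacle is cancelled by the same factor in the upper bound. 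Your fallback replaces this one-line quantitative estimate with a qualitative compactness argument. That argument needs only $L_{\lambda_n}\to0$, together with connectedness of $\Omega$ for Poincar\'e--Wirtinger.
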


\begin{proof}
Set
\[
h(s):=\frac{g(s)}s,\qquad s>0.
\]
By \eqref{g.1'}, $h(s)\to0$ as $s\to0^+$, while by \eqref{g.2}, $h(s)>0$ for every $s>0$. Fix $s_0>0$. For $n$ sufficiently large, $0<\lambda_n<h(s_0)$, and hence there exists $c_n\in(0,s_0)$ such that
\[
g(c_n)=\lambda_n c_n.
\]
Moreover, $c_n\to0$. Indeed, otherwise, up to a subsequence, $c_n\ge\eta>0$, and the continuity and positivity of $h$ would give
\[
\lambda_n=h(c_n)\ge\min_{[\eta,s_0]}h>0,
\]
which is impossible.

The constant functions $c_n$ are solutions of \eqref{P_lambdan}. Hence, by the definition of the least energy level,
\begin{equation}\label{stima livello 1}
J_{\lambda_n}(u_n)=L_{\lambda_n}\le J_{\lambda_n}(c_n)
=
|\Omega|\left(\frac{\lambda_n}{2}c_n^2-G(c_n)\right)
\le
\frac{|\Omega|}{2}\lambda_n c_n^2=o(\lambda_n).
\end{equation}

Since $u_n$ is a weak solution, by Theorem \ref{brezis browder} we may test the equation with $u_n$ and obtain
\begin{align*}
&\int_\Omega\sum_{i,j=1}^N a_{ij}(x,u_n)D_{x_i}u_nD_{x_j}u_n
+\frac12\int_\Omega\sum_{i,j=1}^N D_sa_{ij}(x,u_n)D_{x_i}u_nD_{x_j}u_n\,u_n
+\lambda_n\int_\Omega u_n^2
=
\int_\Omega g(u_n)u_n.
\end{align*}
By \eqref{g.2},
\[
\int_\Omega G(u_n)
\le
\frac1q\int_\Omega g(u_n)u_n.
\]
Therefore,
\begin{align*}
J_{\lambda_n}(u_n)
&\ge
\frac{q-2}{2q}\int_\Omega\sum_{i,j=1}^N a_{ij}(x,u_n)D_{x_i}u_nD_{x_j}u_n
-\frac1{2q}\int_\Omega\sum_{i,j=1}^N D_sa_{ij}(x,u_n)D_{x_i}u_nD_{x_j}u_n\,u_n
+\frac{q-2}{2q}\lambda_n\int_\Omega u_n^2.
\end{align*}
By \eqref{a.3},
\[
J_{\lambda_n}(u_n)
\ge
\frac{q-2-\gamma}{2q}
\int_\Omega\sum_{i,j=1}^N a_{ij}(x,u_n)D_{x_i}u_nD_{x_j}u_n
+
\frac{q-2}{2q}\lambda_n\int_\Omega u_n^2.
\]
Using \eqref{a.2}, we get
\[
J_{\lambda_n}(u_n)
\ge
\frac{(q-2-\gamma)\nu_0}{2q}\int_\Omega|\nabla u_n|^2
+
\frac{q-2}{2q}\lambda_n\int_\Omega u_n^2.
\]
Combining this estimate with \eqref{stima livello 1}, we obtain
\[
\int_\Omega|\nabla u_n|^2=o(\lambda_n),\qquad
\int_\Omega u_n^2=o(1).
\]
In particular, $u_n\to0$ in $H^1(\Omega)$.
\end{proof}

\begin{lemma}\label{lem:uniform-Moser}
Assume that \eqref{a.1}--\eqref{a.3}, \eqref{g.1'} hold and let $\lambda>0$.
Let $u\in H^1(\Omega)\cap L^\infty(\Omega)$ be a weak solution of \eqref{P}. Assume moreover that
\[
\|u\|_{H^1(\Omega)}\le M
\]
for some $M>0$. Then there exists a constant $C=C(M)>0$, independent of $u$ and $\lambda$, such that
\[
\|u\|_{L^\infty(\Omega)}
\le C\|u\|_{L^2(\Omega)}.
\]
\end{lemma}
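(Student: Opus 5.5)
We will run a Moser iteration on the equation, using test functions built from truncated powers of $u$. The sign structure \eqref{a.3} lets us discard (or absorb) the term containing $D_sa_{ij}$. Since $u\in L^\infty(\Omega)$, every test function below lies in $H^1(\Omega)\cap L^\infty(\Omega)$ and is admissible. By the symmetry of the argument it suffices to bound $u^+$; the bound for $u^-$ follows in the same way with $-u$, using test functions supported where $u<0$.

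\textbf{Step 1: the basic inequality.} For $\beta\ge1$ we test with $v=u^+\,(u^+)^{2(\beta-1)}$, which equals $(u^+)^{2\beta-1}$. Then $D_{x_j}v=(2\beta-1)(u^+)^{2\beta-2}D_{x_j}u^+$. The term $\frac12\sum D_sa_{ij}D_{x_i}uD_{x_j}u\,v$ is nonnegative, since $v$ has the sign of $u$ and \eqref{a.3} gives $(D_sa_{ij}\xi_i\xi_j)s\ge0$, so it can be dropped. The term $\lambda\int uv$ is also nonnegative because $\lambda>0$, so it can be dropped as well. This is what makes the constant independent of $\lambda$. By \eqref{a.2} we obtain
\[
\nu_0(2\beta-1)\int_\Omega (u^+)^{2\beta-2}|\nabla u^+|^2\le \int_\Omega g(u)(u^+)^{2\beta-1}\le \int_\Omega \bigl(|u^+|^{2\beta}+C|u^+|^{p-2}(u^+)^{2\beta}\bigr),
\]
where the last inequality uses \eqref{g.1} with $\varepsilon=1$. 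Setting $w=(u^+)^\beta$ and adding $\int w^2$ to both sides, the Sobolev embedding $H^1(\Omega)\hookrightarrow L^{2^*}(\Omega)$ (valid on Lipschitz domains, with the full $H^1$ norm) gives
\[
\|w\|_{2^*}^2\le C\beta\int_\Omega (1+|u|^{p-2})w^2 .
\]

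\textbf{Step 2: handling the potential $a(x)=1+|u|^{p-2}$ (the main obstacle).} Here $a\in L^{N/2+\delta}$ only when $p<2^*$ strictly. Indeed $|u|^{p-2}\in L^{2^*/(p-2)}$ with norm at most $C(M)$, and $2^*/(p-2)>N/2$ since $p<2^*$. The bound depends only on $M$, which is where the hypothesis $\|u\|_{H^1}\le M$ enters. Writing $r=2^*/(p-2)>N/2$, Hölder's inequality gives $\int a w^2\le C(M)\|w\|_{2r'}^2$ with $2r'<2^*$. This yields the iteration
\[
\|u^+\|_{\chi\beta\cdot 2r'}\le (C(M)\beta)^{1/(2\beta)}\|u^+\|_{\beta\cdot 2r'},\qquad \chi=\frac{2^*}{2r'}>1.
\]
Iterating with $\beta_k=\chi^k$ and using $\sum\beta_k^{-1}\log(C\beta_k)<\infty$, we get $\|u^+\|_\infty\le C(M)\|u^+\|_{2r'}$. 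For $N=1,2$ we would replace $2^*$ by any large exponent.

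\textbf{Step 3: reducing to the $L^2$ norm.} The starting exponent $2r'$ exceeds $2$. To close the argument we interpolate: $\|u\|_{2r'}\le \|u\|_\infty^{1-\theta}\|u\|_2^{\theta}$. Combined with Step 2 this gives $\|u\|_\infty\le C(M)\|u\|_\infty^{1-\theta}\|u\|_2^\theta$, hence $\|u\|_\infty\le C(M)^{1/\theta}\|u\|_2$. Division is legitimate because $\|u\|_\infty<\infty$ and the case $u\equiv0$ is trivial. Alternatively, one can start the iteration directly at $\beta=1$ with a Hölder split of the potential term, but the interpolation trick is simpler. All constants depend only on $N,\Omega,\nu_0,C_0,p$ and $M$, and never on $\lambda$, since the $\lambda$-term was discarded by sign.
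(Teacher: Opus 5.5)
Your proposal is correct and is essentially the paper's argument: a Moser iteration with power test functions, where the $D_sa_{ij}$-term is discarded by \eqref{a.3}, the $\lambda$-term by $\lambda>0$, and H\"older's inequality with $r=2^*/(p-2)>N/2$ makes the constant depend only on $M$. The differences are technical: the paper tests with $u|u|^{2s}$ and, at every step, interpolates the $L^{2r'}$ norm of $\psi=u|u|^s$ between $L^2$ and $L^{2^*}$ and absorbs, so the chain starts directly at $L^2$; you instead treat $u^\pm$ separately, iterate from $L^{2r'}$, and close with a single $L^2$--$L^\infty$ interpolation using the a priori finiteness of $\|u\|_{L^\infty(\Omega)}$, which is equally valid.
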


\begin{proof}
By Theorem~\ref{regolarità}, $u\in L^\infty(\Omega)$. Hence, for every $s\ge0$, the function
\(
u|u|^{2s}
\)
is an admissible test function in \eqref{P}. We obtain
\begin{align*}
&(2s+1)\int_\Omega\sum_{i,j=1}^N a_{ij}(x,u)|u|^{2s}D_{x_i}uD_{x_j}u
+\frac12\int_\Omega\sum_{i,j=1}^N D_sa_{ij}(x,u)D_{x_i}uD_{x_j}u\,u|u|^{2s}\\
&\qquad
+\lambda\int_\Omega |u|^{2s+2}
=
\int_\Omega g(u)u|u|^{2s}.
\end{align*}
By \eqref{a.3} and since $\lambda>0$, the second and third terms on the left-hand side are nonnegative. Moreover, by \eqref{g.1}, with $\varepsilon=1$,
\[
|g(u)u||u|^{2s}
\le
\left(1+C|u|^{p-2}\right)|u|^{2s+2}.
\]
Therefore, by \eqref{a.2},
\[
\nu_0(2s+1)\int_\Omega |u|^{2s}|\nabla u|^2
\le
C\int_\Omega\left(1+|u|^{p-2}\right)|u|^{2s+2}.
\]
Set
\(
\psi:=u|u|^s.
\)
Then
\[
|\nabla\psi|^2=(s+1)^2|u|^{2s}|\nabla u|^2,
\]
and hence
\begin{equation}\label{Linfty iteration 1}
\nu_0\frac{2s+1}{(s+1)^2}\int_\Omega|\nabla\psi|^2
\le
C\int_\Omega\left(1+|u|^{p-2}\right)\psi^2.
\end{equation}

By the Sobolev inequality and \eqref{Linfty iteration 1},
\begin{align*}
\|\psi\|_{L^{2^*}(\Omega)}^2
&\le
C\left(\int_\Omega|\nabla\psi|^2+\int_\Omega\psi^2\right)\le
C(s+1)\int_\Omega\left(1+|u|^{p-2}\right)\psi^2.
\end{align*}
Here and in what follows, $C>0$ denotes a constant possibly depending on $M$, but independent of $u$, $\lambda$, and $s$.

Let
\(
r:=\frac{2^*}{p-2}.
\)
Since $p<2^*$, one has
\(
r>\frac N2.
\)
Moreover, since
\(
\|u\|_{H^1(\Omega)}\le M,
\)
the Sobolev embedding gives
\(
\|u\|_{L^{2^*}(\Omega)}\le CM.
\)
Thus
\[
\bigl\|1+|u|^{p-2}\bigr\|_{L^r(\Omega)}
\le C
\]
uniformly with respect to $u$ and $\lambda$. By H\"older's inequality,
\begin{equation}\label{Linfty iteration 2}
\|\psi\|_{L^{2^*}(\Omega)}^2
\le
C(s+1)\|\psi\|_{L^{2r'}(\Omega)}^2.
\end{equation}

Since $r>N/2$, we have
\(
2<2r'<2^*.
\)
Hence, setting
\(
\theta:=\frac{N}{2r}\in(0,1),
\)
the interpolation inequality gives
\[
\|\psi\|_{L^{2r'}(\Omega)}
\le
\|\psi\|_{L^{2^*}(\Omega)}^\theta
\|\psi\|_{L^2(\Omega)}^{1-\theta}.
\]
Combining this estimate with \eqref{Linfty iteration 2}, we obtain
\[
\|\psi\|_{L^{2^*}(\Omega)}^2
\le
C(s+1)
\|\psi\|_{L^{2^*}(\Omega)}^{2\theta}
\|\psi\|_{L^2(\Omega)}^{2(1-\theta)}.
\]
Therefore,
\begin{equation}\label{Linfty iteration 3}
\|\psi\|_{L^{2^*}(\Omega)}
\le
\bigl(C(s+1)\bigr)^{\frac1{2(1-\theta)}}
\|\psi\|_{L^2(\Omega)}.
\end{equation}

We now perform a standard Moser iteration. Set
\[
q_0:=2(s+1),\qquad \tau:=\frac{2^*}{2}=\frac{N}{N-2}>1.
\]
Since
\(
\|\psi\|_{L^2(\Omega)}
=
\|u\|_{L^{q_0}(\Omega)}^{q_0/2}
\)
and
\(
\|\psi\|_{L^{2^*}(\Omega)}
=
\|u\|_{L^{\tau q_0}(\Omega)}^{q_0/2},
\)
inequality \eqref{Linfty iteration 3} yields
\[
\|u\|_{L^{\tau q_0}(\Omega)}
\le
(Cq_0)^{\frac1{(1-\theta)q_0}}
\|u\|_{L^{q_0}(\Omega)}
\qquad\text{for every }q_0\ge2.
\]
Taking
\(
q_k:=2\tau^k,
\)
we obtain
\[
\|u\|_{L^{q_{k+1}}(\Omega)}
\le
(Cq_k)^{\frac1{(1-\theta)q_k}}
\|u\|_{L^{q_k}(\Omega)}.
\]
Iterating,
\[
\|u\|_{L^{q_{k+1}}(\Omega)}
\le
\left[
\prod_{j=0}^k
(Cq_j)^{\frac1{(1-\theta)q_j}}
\right]
\|u\|_{L^2(\Omega)}.
\]
Since
\[
\sum_{j=0}^\infty\frac1{q_j}<+\infty
\qquad\text{and}\qquad
\sum_{j=0}^\infty\frac{\log q_j}{q_j}<+\infty,
\]
the product on the right-hand side is bounded independently of $k$. Passing to the limit as $k\to+\infty$, we conclude that
\[
\|u\|_{L^\infty(\Omega)}
\le
C\|u\|_{L^2(\Omega)}.
\]
\end{proof}

\begin{corollary}\label{lem:H1-Linfty-smallness}
Assume that \eqref{a.1}--\eqref{a.3}, \eqref{g.1'} hold.
Let $\lambda_n\to0^+$ and let $u_n$ be weak solutions of \eqref{P_lambdan} such that
\(
u_n\to0
\ \text{in }H^1(\Omega).
\)
Then
\(
u_n\to0
\ \text{in }L^\infty(\Omega).
\)
\end{corollary}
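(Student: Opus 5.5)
The corollary follows by applying Lemma~\ref{lem:uniform-Moser} uniformly along the sequence. I will check that its hypotheses hold for every $n$ with a single constant, and then use $\|u_n\|_{L^2(\Omega)}\to0$.

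First, each $u_n$ is a weak solution of \eqref{P_lambdan} with $\lambda_n>0$. By Theorem~\ref{regolarità}, $u_n\in H^1(\Omega)\cap L^\infty(\Omega)$, so $u_n$ belongs to the class covered by Lemma~\ref{lem:uniform-Moser}. Since $u_n\to0$ in $H^1(\Omega)$, the sequence is bounded: there is $M>0$ with $\|u_n\|_{H^1(\Omega)}\le M$ for every $n$. For instance, $M=1$ works after discarding finitely many terms, and the discarded terms do not affect the limit.

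Next, Lemma~\ref{lem:uniform-Moser} gives a constant $C=C(M)$, independent of $u$ and of $\lambda>0$, such that $\|u_n\|_{L^\infty(\Omega)}\le C\|u_n\|_{L^2(\Omega)}$ for all $n$. The independence from $\lambda$ is what lets the parameter vary with $n$. It holds because the term $\lambda_n\int_\Omega|u_n|^{2s+2}$ was discarded as nonnegative in that proof, and the growth bound \eqref{g.1} with $\varepsilon=1$ does not involve $\lambda$.

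Finally, $\|u_n\|_{L^2(\Omega)}\le\|u_n\|_{H^1(\Omega)}\to0$, so $\|u_n\|_{L^\infty(\Omega)}\to0$.

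The only delicate point is confirming that the constant in Lemma~\ref{lem:uniform-Moser} is uniform in $n$, that is, that it depends only on $M$ through the bound on $\|1+|u|^{p-2}\|_{L^r}$, and this is exactly how that lemma is stated. There is no real obstacle beyond this.
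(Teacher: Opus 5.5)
Your proposal is correct and matches the paper's proof: boundedness of $\{u_n\}$ in $H^1(\Omega)$ lets you apply Lemma~\ref{lem:uniform-Moser} with a constant independent of $n$ and $\lambda_n$, and then $\|u_n\|_{L^\infty(\Omega)}\le C\|u_n\|_{L^2(\Omega)}\to0$. Your explicit appeal to the $L^\infty$-regularity theorem, to place each $u_n$ in $H^1(\Omega)\cap L^\infty(\Omega)$, is a harmless extra check that the paper leaves implicit, since that result is already invoked inside the lemma's proof.
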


\begin{proof}
Since $u_n\to0$ in $H^1(\Omega)$, the sequence $\{u_n\}$ is bounded in $H^1(\Omega)$. Hence Lemma~\ref{lem:uniform-Moser} gives
\[
\|u_n\|_{L^\infty(\Omega)}
\le C\|u_n\|_{L^2(\Omega)}.
\]
Since $u_n\to0$ in $L^2(\Omega)$, the conclusion follows.
\end{proof}

\begin{lemma}\label{lem:small-solution-rigidity}
Assume that \eqref{a.1}-\eqref{a.2}, \eqref{g.1'} hold and let $g\in C^1(\mathbb R)$. There exists $\delta>0$ such that, for every $\lambda>0$, every weak solution
\(
u\in H^1(\Omega)\cap L^\infty(\Omega)
\)
of \eqref{P} satisfying
\(
\|u\|_{L^\infty(\Omega)}\le\delta
\)
is constant.
\end{lemma}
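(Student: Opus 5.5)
The plan is to split $u$ into its mean and a zero-mean part and test the equation with the zero-mean part. Write
\[
\bar u:=\frac1{|\Omega|}\int_\Omega u,\qquad w:=u-\bar u .
\]
Then $w\in H^1(\Omega)\cap L^\infty(\Omega)$, $\int_\Omega w=0$, $\nabla w=\nabla u$, and $\|w\|_{L^\infty(\Omega)}\le 2\|u\|_{L^\infty(\Omega)}\le 2\delta$. Since $w$ is bounded, it is an admissible test function in the definition of weak solution, so no appeal to Theorem~\ref{brezis browder} is needed. Testing \eqref{P} with $w$ gives
\[
\int_\Omega\sum_{i,j=1}^N a_{ij}(x,u)D_{x_i}uD_{x_j}u+\frac12\int_\Omega\sum_{i,j=1}^N D_sa_{ij}(x,u)D_{x_i}uD_{x_j}u\,w=\int_\Omega\bigl(g(u)-\lambda u\bigr)w .
\]

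\textbf{Left-hand side.} The first term is at least $\nu_0\int_\Omega|\nabla w|^2$ by \eqref{a.2}. This is the step I expect to be the main obstacle: only \eqref{a.1}--\eqref{a.2} are assumed, so the quadratic gradient term has no sign. I would control it purely through the $L^\infty$ smallness of $w$. By \eqref{a.1} and $|w|\le2\delta$,
\[
\Bigl|\frac12\int_\Omega\sum_{i,j=1}^N D_sa_{ij}(x,u)D_{x_i}uD_{x_j}u\,w\Bigr|\le N C_0\,\delta\int_\Omega|\nabla w|^2 .
\]
Hence the left-hand side is at least $(\nu_0-NC_0\delta)\int_\Omega|\nabla w|^2$.

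\textbf{Right-hand side.} Because $\int_\Omega w=0$, I can subtract the constant $g(\bar u)-\lambda\bar u$ for free:
\[
\int_\Omega\bigl(g(u)-\lambda u\bigr)w=\int_\Omega\bigl(g(u)-g(\bar u)\bigr)w-\lambda\int_\Omega w^2\le \omega(\delta)\int_\Omega w^2 .
\]
Here I drop $-\lambda\int_\Omega w^2\le0$, which is where $\lambda>0$ enters, and set $\omega(\delta):=\max_{|s|\le\delta}|g'(s)|$. The mean value theorem applies because $|u|,|\bar u|\le\delta$.

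\textbf{Choice of $\delta$.} Since $g\in C^1(\mathbb R)$ and $g(s)/s\to0$ by \eqref{g.1'}, we have $g(0)=0$ and $g'(0)=0$. Therefore $\omega(\delta)\to0$ as $\delta\to0$, and this bound does not depend on $\lambda$.

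\textbf{Conclusion.} The Poincar\'e--Wirtinger inequality $\int_\Omega w^2\le C_P\int_\Omega|\nabla w|^2$ (valid since $\Omega$ is a bounded Lipschitz domain) combines the two estimates into
\[
\bigl(\nu_0-NC_0\delta-C_P\,\omega(\delta)\bigr)\int_\Omega|\nabla w|^2\le0 .
\]
Choose $\delta>0$ independent of $\lambda$ so that the bracket is positive, for instance at least $\nu_0/2$. Then $\nabla u=\nabla w=0$, so $u\equiv\bar u$ is constant because $\Omega$ is connected. The whole argument is uniform in $\lambda>0$, which gives the statement.
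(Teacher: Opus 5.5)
Your proposal is correct and follows essentially the same argument as the paper. Both split $u$ into its mean and a zero-mean part $w$, test with $w$, bound the $D_sa_{ij}$ term by $NC_0\|u\|_{L^\infty(\Omega)}\int_\Omega|\nabla w|^2$, apply the mean value estimate to $g(u)-g(\bar u)$, and conclude with Poincar\'e--Wirtinger and the same choice of $\delta$. The only cosmetic difference is that you move the $\lambda$-term to the right and drop it, whereas the paper keeps $\lambda\int_\Omega w^2\ge 0$ on the left.
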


\begin{proof}
Set
\[
c:=\frac1{|\Omega|}\int_\Omega u,\qquad w:=u-c.
\]
Since $u\in L^\infty(\Omega)$, we have $w\in H^1(\Omega)\cap L^\infty(\Omega)$. Hence $w$ is an admissible test function. Testing \eqref{P} with $w$, we obtain
\begin{align*}
&\int_\Omega\sum_{i,j=1}^N a_{ij}(x,u)D_{x_i}wD_{x_j}w
+\frac12\int_\Omega\sum_{i,j=1}^N D_sa_{ij}(x,u)D_{x_i}wD_{x_j}w\,w
+\lambda\int_\Omega w^2
=
\int_\Omega g(u)w.
\end{align*}
Since $\int_\Omega w=0$,
\[
\int_\Omega g(u)w
=
\int_\Omega\left(g(u)-g(c)\right)w.
\]
By \eqref{g.1'}, $g(0)=g'(0)=0$. Set
\(
\omega(\rho):=\max_{|s|\le\rho}|g'(s)|.
\)
Since $g\in C^1(\mathbb R)$, $\omega(\rho)\to0$ as $\rho\to0^+$. Moreover, observing that
\(
|c|\le\|u\|_{L^\infty(\Omega)},
\)
the mean value theorem gives
\[
\left|\int_\Omega g(u)w\right|
\le
\omega\left(\|u\|_{L^\infty(\Omega)}\right)\int_\Omega w^2.
\]
On the other hand, by \eqref{a.1},
\begin{align*}
\left|
\frac12\int_\Omega\sum_{i,j=1}^N D_sa_{ij}(x,u)D_{x_i}wD_{x_j}w\,w
\right|
&\le
\frac{C_0}{2}\|w\|_{L^\infty(\Omega)}
\int_\Omega\sum_{i,j=1}^N|D_{x_i}w||D_{x_j}w|\\
&\le
\frac{NC_0}{2}\|w\|_{L^\infty(\Omega)}
\int_\Omega|\nabla w|^2.
\end{align*}
Moreover,
\(
\|w\|_{L^\infty(\Omega)}
\le2\|u\|_{L^\infty(\Omega)}.
\)
Thus, by \eqref{a.2},
\[
\left(
\nu_0-NC_0\|u\|_{L^\infty(\Omega)}
\right)
\int_\Omega|\nabla w|^2
+
\lambda\int_\Omega w^2
\le
\omega\left(\|u\|_{L^\infty(\Omega)}\right)\int_\Omega w^2.
\]
Since $w$ has zero mean, the Poincar\'e inequality gives
\[
\int_\Omega w^2
\le
C_P\int_\Omega|\nabla w|^2.
\]
Therefore,
\[
\left[
\nu_0
-
NC_0\|u\|_{L^\infty(\Omega)}
-
C_P\omega\left(\|u\|_{L^\infty(\Omega)}\right)
\right]
\int_\Omega|\nabla w|^2
\le0.
\]
Choose $\delta>0$ so small that
\(
\nu_0-NC_0\delta-C_P\omega(\delta)>0.
\)
If $\|u\|_{L^\infty(\Omega)}\le\delta$, it follows that
\[
\int_\Omega|\nabla w|^2=0.
\]
Since $w$ has zero mean, $w\equiv0$. Hence $u\equiv c$.
\end{proof}

\begin{proof}[Proof of Proposition~\ref{small lambda general}]
Assume by contradiction that there exist $\lambda_n\to0^+$ and nonconstant least energy solutions $u_n$ of \eqref{P_lambdan}. By Lemma~\ref{lem:small-ground-state},
\[
u_n\to0\qquad\text{in }H^1(\Omega).
\]
Hence, by Corollary~\ref{lem:H1-Linfty-smallness},
\[
u_n\to0\qquad\text{in }L^\infty(\Omega).
\]
Therefore, for $n$ sufficiently large,
\[
\|u_n\|_{L^\infty(\Omega)}\le\delta,
\]
where $\delta$ is given by Lemma~\ref{lem:small-solution-rigidity}. It follows that $u_n$ is constant, a contradiction. Thus every least energy solution is constant for all sufficiently small $\lambda>0$. If $u_\lambda\equiv c_\lambda\neq0$, then
\(
g(c_\lambda)=\lambda c_\lambda,
\)
and consequently
\[
L_\lambda=J_\lambda(c_\lambda)=|\Omega|\left(\frac{\lambda}{2}c_\lambda^2-G(c_\lambda)\right).
\]
\end{proof}

\begin{proof}[Proof of Corollary~\ref{small lambda power}]
It is enough to apply Proposition~\ref{small lambda general} with
\(
g(s)=|s|^{p-2}s.
\)
Indeed, $g\in C^1(\mathbb R)$ and \eqref{g.1'}-\eqref{g.2} hold with $q=p$. If $u_\lambda\equiv c\neq0$ is a constant solution of \eqref{lane emden lambda problem}, then
\(
|c|^{p-2}c=\lambda c,
\)
and hence
\(
c=\pm\lambda^{\frac1{p-2}}.
\)
Consequently,
\[
L_\lambda=J_\lambda\left(\pm\lambda^{\frac1{p-2}}\right)=\frac{p-2}{2p}|\Omega|\lambda^{\frac{p}{p-2}}.
\]
\end{proof}

}

 \section{Indefinite and non-resonant case}

{
Consider the following elliptic eigenvalue problem with Neumann boundary conditions:
\begin{equation}\label{eigenvalue problem}
\left\{
\begin{aligned}
    -\sum_{i,j=1}^N D_{x_j}(a_{ij}(x,0)D_{x_i}\varphi_k) &= \lambda_k \varphi_k \quad &\text{in } \Omega,\\
    \sum_{i,j=1}^N a_{ij}(x,0)D_{x_i}\varphi_k\, \nu_j &= 0 \quad &\text{on } \partial\Omega.
\end{aligned}
\right.
\end{equation}

Assume that the coefficients $a_{ij}(x,0)$ satisfy \eqref{a.1} and \eqref{a.2}, namely boundedness and uniform ellipticity. 

It is well known 
that there exists a sequence  $\{(\lambda_k,\varphi_k)\}_{k\ge1}$ such that $\lambda_k \ge 0$, $\varphi_k \in H^1(\Omega)$, and each pair $(\lambda_k,\varphi_k)$ solves \eqref{eigenvalue problem}. Moreover, the eigenfunctions can be chosen to be orthonormal in $L^2(\Omega)$ and orthogonal with respect to the energy bilinear form, namely
\[
\int_\Omega \sum_{i,j=1}^N a_{ij}(x,0) D_{x_i}\varphi_\ell D_{x_j}\varphi_k 
= \lambda_\ell \int_\Omega \varphi_\ell \varphi_k 
= \lambda_\ell \delta_{\ell k}=
\begin{cases}
    \lambda_\ell&\text{if $\ell=k$}\\
    0&\text{if $\ell\ne k$.}
\end{cases}
\]

Furthermore, $\lambda_k \to +\infty$ as $k \to +\infty$, and the family $\{\varphi_k\}$ is complete (in particular, it forms an orthonormal basis of $L^2(\Omega)$ and generates $H^1(\Omega)$).

Due to the Neumann boundary conditions, the first eigenvalue is $\lambda_1 = 0$, and the corresponding eigenfunction is constant:
\[
\varphi_1 = |\Omega|^{-1/2}.
\]

}

\begin{proof}[Proof of Theorem \ref{theorem lambda<0 non resonant case}]
{
The multiplicity and nonconstancy assertions follow from Theorem~\ref{known existence}. Assume now \eqref{non-resonant hypothesis}. Choose nontrivial solutions $u_n$ with $J_\lambda(u_n)\to L_\lambda$. Theorem~\ref{PS condition} gives, along a subsequence, $u_n\to u$ strongly in $H^1(\Omega)$, where $u$ solves \eqref{P} and $J_\lambda(u)=L_\lambda$. We prove that $u\ne0$.
}

Assume by contradiction that $u_n \to 0$ in $H^1(\Omega)$ and set
\[
z_n := \frac{u_n}{\|u_n\|}.
\]
Up to a subsequence, we may assume that $z_n \rightharpoonup z$ in $H^1(\Omega)$. 
Testing \eqref{P} with $\varphi \in H^1(\Omega)\cap L^\infty(\Omega)$ and dividing by $\|u_n\|$, we get
\begin{equation}\label{equazione normalizzata z_n}
\int_\Omega\sum_{i,j=1}^N a_{ij}(x,u_n)D_{x_i} z_n D_{x_j} \varphi
+\lambda\int_\Omega z_n \varphi
+\frac12\int_\Omega \sum_{i,j=1}^N\left( D_s a_{ij}(x,u_n)D_{x_i} u_n D_{x_j} z_n\right) \varphi
= \int_\Omega \frac{g(\|u_n\|z_n)}{\|u_n\|}\varphi.
\end{equation}

Since $g(s)=o(s)$ as $s\to0$ by \eqref{g.1'}, it follows that
\begin{equation}\label{limite z_n}
\lim_{n\to+\infty}\left\{\int_\Omega\sum_{i,j=1}^N a_{ij}(x,u_n)D_{x_i} z_n D_{x_j} \varphi
+\lambda\int_\Omega z_n \varphi\right\}=0.
\end{equation}

Passing to the limit, using \eqref{a.1}--\eqref{a.2} and the weak convergence of $z_n$, we obtain
\[
\int_\Omega\sum_{i,j=1}^N a_{ij}(x,0)D_{x_i} z D_{x_j} \varphi
+\lambda\int_\Omega z \varphi = 0.
\]
By the non-resonance assumption \eqref{non-resonant hypothesis}, this implies $z \equiv 0$.

{
Taking $\varphi=z_n$ in \eqref{equazione normalizzata z_n}, we obtain
\[
\begin{aligned}
&\int_\Omega \sum_{i,j=1}^N
a_{ij}(x,u_n)D_{x_i}z_nD_{x_j}z_n
+\lambda\int_\Omega z_n^2 
+\frac12\int_\Omega\sum_{i,j=1}^N
\bigl(D_sa_{ij}(x,u_n)D_{x_i}z_nD_{x_j}z_n\bigr)u_n
=
\int_\Omega \frac{g(u_n)}{\|u_n\|}z_n .
\end{aligned}
\]
By \eqref{a.3}, the third term on the left-hand side is nonnegative.
Moreover, from \eqref{g.1},
\[
\begin{aligned}
\left|
\int_\Omega \frac{g(u_n)}{\|u_n\|}z_n
\right|
&\le
\varepsilon\int_\Omega z_n^2
+
C_\varepsilon \|u_n\|^{p-2}
\int_\Omega |z_n|^p =o(1),
\end{aligned}
\]
since $\{z_n\}$ is bounded in $H^1(\Omega)$, $p>2$, and
$\|u_n\|\to0$. Therefore, recalling that $\lambda\le0$ and
$z_n\to0$ in $L^2(\Omega)$, by $(a.2)$ we get
\[
\nu_0\int_\Omega |\nabla z_n|^2
\le
-\lambda\int_\Omega z_n^2+o(1)
=o(1).
\]
Hence $z_n\to0$ strongly in $H^1(\Omega)$, which contradicts
$\|z_n\|=1$. Thus $u\not\equiv0$, and the least energy level
$L_\lambda$ is achieved.
}




\end{proof}

\section{Indefinite and resonant case}
In this section, we consider the problem \eqref{lane emden problem}:
    \begin{equation}
    \left\{
    \begin{aligned}
-&\sum_{i,j=1}^ND_{x_j}(a_{ij}(x,u)D_{x_i}u)+\frac12\sum_{i,j=1}^ND_sa_{ij}(x,u)D_{x_i}uD_{x_j}u=|u|^{p-2}u\quad \text{in $\Omega$},\\
&\sum_{i,j=1}^Na_{ij}(x,u)D_{x_i}u\ \nu_j=0\quad\text{on $\partial\Omega$},
\end{aligned}
\right.
\end{equation}

and the respective energy functional $J_0:H^1(\Omega)\to\R$, given by
\[
J_0(u)=\frac12\int_\Omega\sum_{i,j=1}^Na_{ij}(x,u)D_{x_i}u D_{x_j}u-\frac{1}p\int_\Omega |u|^p,\qquad p\in(2,2^*).
\]
We denote by $H^0\subset H^1(\Omega)$ the subspace of constant functions.

Define the least energy level
\[
L_0=\inf\left\{J_0(u)\ :\ \text{$u\not\equiv0$ is a weak solution of \eqref{lane emden problem}}\right \}.
\]

{
\begin{proof}[Proof of Theorem \ref{lane emden theorem}]
\textbf{Step 1}: existence of a nonconstant least energy solution.

{
Theorem~\ref{known existence}, applied with $\lambda=0$ and $g(s)=|s|^{p-2}s$, give a nonempty solution set.
}

Let $\{u_n\}$ be a minimizing sequence of nontrivial solutions, namely $u_n\not\equiv0$, $u_n$ solves \eqref{lane emden problem}, and $J_0(u_n)\to L_0$.
{By Theorem~\ref{PS condition}, after passing to a subsequence, we have $u_n\to u$ in $H^1(\Omega)$, where $u$ is a weak solution and $J_0(u)=L_0$.
}

From \eqref{a.2} and Theorem \ref{brezis browder}, we have
\begin{equation}\label{stima gradiente}
\nu_0 \int_\Omega |\nabla u_n|^2 \le \int_\Omega |u_n|^p \le C_S \|u_n\|^p,
\end{equation}
for some constant $C_S>0$.

Assume by contradiction that $\|u_n\|\to 0$, and define
\[
z_n := \frac{u_n}{\|u_n\|}.
\]
Dividing \eqref{stima gradiente} by $\|u_n\|^2$, we get $\|\nabla z_n\|_2\to0$. Since $\|z_n\|_{H^1}=1$, the $L^2$-norm of $z_n$ does not vanish. Hence, up to a subsequence, $z_n \rightharpoonup z_\infty$ in $H^1(\Omega)$ and $z_n \to z_\infty$ in $L^p(\Omega)$, with $z_\infty$ constant and $z_\infty\neq0$.

Testing \eqref{lane emden problem} with $z_\infty \in C^\infty(\overline{\Omega})$, we obtain
\[
\int_\Omega \sum_{i,j=1}^N \left(D_s a_{ij}(x,u_n) D_{x_i}u_n D_{x_j}u_n \right) z_\infty
= 2 \int_\Omega |u_n|^{p-2} u_n z_\infty.
\]
Dividing both sides by $\|u_n\|^{p-1}$ yields
\[
\|u_n\|^{1-p} \int_\Omega \sum_{i,j=1}^N\left( D_s a_{ij}(x,u_n) D_{x_i}u_n D_{x_j}u_n \right) z_\infty
= 2 \int_\Omega |z_n|^{p-2} z_n z_\infty.
\]

Passing to the limit as $n \to +\infty$, we obtain
\[
2 \int_\Omega |z_\infty|^p 
= \lim_{n \to +\infty} 
\|u_n\|^{1-p} \int_\Omega \sum_{i,j=1}^N\left( D_s a_{ij}(x,u_n) D_{x_i}u_n D_{x_j}u_n \right) z_\infty.
\]

On the other hand, using \eqref{a.1} and \eqref{a.2}, we estimate
\[
\left| \int_\Omega \sum_{i,j=1}^N \left(D_s a_{ij}(x,u_n) D_{x_i}u_n D_{x_j}u_n \right) z_\infty \right|
\le NC_0 \int_\Omega |\nabla u_n|^2 |z_\infty|
\le \frac{NC_0}{\nu_0} \int_\Omega |u_n|^p |z_\infty|.
\]
Therefore,
\[
2 \int_\Omega |z_\infty|^p 
\le \liminf_{n \to +\infty} 
\frac{NC_0}{\nu_0} \|u_n\|^{1-p} \int_\Omega |u_n|^p |z_\infty|
\le \liminf_{n \to +\infty} 
\frac{NC_0 C_S}{\nu_0} \|u_n\| \int_\Omega |z_\infty|.
\]

Since $\|u_n\|\to 0$, the right-hand side tends to $0$, yielding a contradiction with
\[
\int_\Omega |z_\infty|^p > 0.
\]

Hence, $\|u_n\| \not\to 0$, and therefore there exists a nontrivial least energy solution 
$u \in H^1(\Omega)\setminus\{0\}$.

Moreover, Theorem \ref{regolarità} implies that $u\in H^1(\Omega)\cap L^\infty(\Omega)$.

Since $u$ is a solution of \eqref{lane emden problem}, we have
\[
\int_\Omega|u|^{p-2}u=\frac12\int_\Omega\sum_{i,j=1}^ND_sa_{ij}(x,u)D_{x_i}u D_{x_j}u.
\]
Thus, $u$ is not a constant.

\textbf{Step 2}: $L_\lambda\le J_\lambda(\lambda^{\frac{1}{p-2}})\to 0$ as $\lambda\to0^+$. 

For every $\lambda>0$, the constant functions
\[
\kappa_\lambda\equiv \pm \lambda^{\frac{1}{p-2}}
\]
solve \eqref{lane emden lambda problem}. Therefore, by the definition of $L_\lambda$,
\[
L_\lambda\le J_\lambda(\kappa_\lambda).
\]
Since $\nabla \kappa_\lambda=0$, we have
\[
J_\lambda(\kappa_\lambda)
=\frac{\lambda}{2}\int_\Omega \kappa_\lambda^2-\frac1p\int_\Omega |\kappa_\lambda|^p
=\left(\frac12-\frac1p\right)|\Omega|\lambda^{\frac{p}{p-2}}
=\frac{p-2}{2p}|\Omega|\lambda^{\frac{p}{p-2}}.
\]
Hence $L_\lambda\to0$ as $\lambda\to0^+$.

Since 
\begin{align}
L_0&=\frac{p-2}{2p}\int_\Omega \sum_{i,j=1}^Na_{ij}(x,u)D_{x_i}u D_{x_j}u-\frac{1}{2p}\int_\Omega\left( D_sa_{ij}(x,u) D_{x_i}uD_{x_j}u\right) u\\
&\ge\frac{p-2-\gamma}{2p}\int_\Omega\sum_{i,j=1}^Na_{ij}(x,u)D_{x_i}u D_{x_j}u>0,
\end{align}
from \eqref{a.3}, we obtain that $L_0>0$ and $L_\lambda:[0,+\infty)\to(0,+\infty)$ is not continuous at $\lambda=0$.

\end{proof}
}

\begin{proof}[Proof of Corollary \ref{semilinear case}]
We divide the proof into several steps.

\medskip

{
\textbf{Step 1}: The case $\lambda\to0^+$ and $\lambda=0$.
Since the coefficients are independent of $s$, condition \eqref{a.3} is trivially satisfied. Hence Corollary~\ref{small lambda power} applies and yields the existence of $\widetilde\lambda_0>0$ such that, for every $0<\lambda<\widetilde\lambda_0$, every least energy solution is constant. Therefore,
\[
u_\lambda\equiv\pm\lambda^{\frac1{p-2}},
\qquad
L_\lambda=\frac{p-2}{2p}|\Omega|\lambda^{\frac{p}{p-2}}.
\]
In particular,
\[
L_\lambda\to0
\qquad\text{as }\lambda\to0^+.
\]
On the other hand, Theorem~\ref{lane emden theorem} gives the existence of a nonconstant least energy solution for $\lambda=0$, with $L_0>0$.
}
\medskip

{\noindent\textbf{Step 2}:} The case $\lambda>0$ and the limit as $\lambda\to+\infty$.
The existence of a least energy solution $u_\lambda \in H^1(\Omega)\cap L^\infty(\Omega)$ for every $\lambda>0$ follows from Theorem \ref{theorem lambda positive}. Moreover, the same result ensures that
\[
L_\lambda \to +\infty \quad \text{as } \lambda \to +\infty,
\]
and that $L_\lambda$ is achieved by a nonconstant solution for $\lambda$ sufficiently large. {By the threshold $\lambda_0$ defined in
\eqref{lambda_0}, every least energy solution is
nonconstant for $\lambda>\lambda_0$.}

\medskip

\noindent\textbf{Step 3}: The case $\lambda<0$.
We now prove the existence of a least energy solution for every $\lambda<0$.

{
Theorem~\ref{known existence} supplies nontrivial solutions. Since the coefficients are independent of $s$, their evenness is automatic, and the power nonlinearity is odd. 
} 

{
Let $u_n$ be nontrivial solutions with $J_\lambda(u_n)\to L_\lambda$. By Theorem~\ref{PS condition}, a subsequence converges strongly in $H^1(\Omega)$ to a weak solution $u$ with $J_\lambda(u)=L_\lambda$. We claim that $u\ne0$.
}

Assume by contradiction that $u_n \to 0$ in $H^1(\Omega)$ and set
\[
z_n := \frac{u_n}{\|u_n\|}.
\]
Up to a subsequence, we may assume that $z_n \rightharpoonup z$ in $H^1(\Omega)$.

Let us consider the quadratic form
\[
Q_\lambda(u) := \int_\Omega \sum_{i,j=1}^N a_{ij}(x) D_{x_i}u D_{x_j}u + \lambda \int_\Omega u^2.
\]

By the spectral decomposition associated with the eigenvalue problem \eqref{eigenvalue problem}, we define
\[
H^+ := \overline{\mathrm{span}\{\varphi_k : \lambda_k + \lambda > 0\}}, \quad
H^- := \mathrm{span}\{\varphi_k : \lambda_k + \lambda < 0\}, \quad
H^0 := \mathrm{span}\{\varphi_k : \lambda_k + \lambda = 0\}.
\]

Then we have the orthogonal decomposition
\[
H^1(\Omega) = H^+ \oplus H^- \oplus H^0,
\]
where the orthogonality is understood in $L^2(\Omega)$.

Moreover, the quadratic form $Q_\lambda$ is positive definite on $H^+$, negative definite on $H^-$, and vanishes on $H^0$.

Accordingly, for every $u \in H^1(\Omega)$, we can write
\[
u = u^+ + u^- + u^0,
\quad \text{with } u^\pm \in H^\pm, \; u^0 \in H^0.
\]

In particular, for the sequence $\{u_n\}$ and the normalized sequence $\{z_n\}$, we have
\[
u_n = u_n^+ + u_n^- + u_n^0, \qquad
z_n = z_n^+ + z_n^- + z_n^0.
\]

On the other hand, $u_n$ satisfies
\[
\int_\Omega\sum_{i,j=1}^N a_{ij}(x)D_{x_i}u_n D_{x_j}\varphi
+\lambda\int_\Omega u_n \varphi
=\int_\Omega |u_n|^{p-2}u_n \varphi,
\qquad \text{for every } \varphi \in H^1(\Omega).
\]
Dividing by $\|u_n\|$, we obtain
\begin{equation}\label{equazione z_n semilineare}
\int_\Omega\sum_{i,j=1}^N a_{ij}(x)D_{x_i}z_n D_{x_j}\varphi
+\lambda\int_\Omega z_n \varphi
=\|u_n\|^{p-2}\int_\Omega |z_n|^{p-2}z_n \varphi.
\end{equation}

Since $z_n \rightharpoonup z$ in $H^1(\Omega)$, $\|u_n\|\to0$, and $p>2$, passing to the limit in \eqref{equazione z_n semilineare} yields
\[
\int_\Omega\sum_{i,j=1}^N a_{ij}(x)D_{x_i}z D_{x_j}\varphi
+\lambda\int_\Omega z \varphi = 0,
\]
for every $\varphi \in H^1(\Omega)$. Hence, $z \in H^0$.

\medskip

Taking $\varphi = z_n^+$ in \eqref{equazione z_n semilineare} and using the orthogonality of the decomposition, we obtain
\[
\int_\Omega\sum_{i,j=1}^N a_{ij}(x)D_{x_i}z_n^+ D_{x_j}z_n^+
+\lambda\int_\Omega |z_n^+|^2
=\|u_n\|^{p-2}\int_\Omega |z_n|^{p-2}z_n z_n^+.
\]
Since the right-hand side tends to $0$, and the quadratic form is positive definite on $H^+$, we deduce that
\[
z_n^+ \to 0 \quad \text{in } H^1(\Omega).
\]

In particular, recalling that $z_n \rightharpoonup z \in H^0$ and that $H^-$ is finite-dimensional, we conclude that
\[
z_n \to z \quad \text{strongly in } H^1(\Omega).
\]

\medskip

Next, taking $\varphi = z_n^0$ in \eqref{equazione z_n semilineare}, we obtain
\[
\|u_n\|^{p-2}\int_\Omega |z_n|^{p-2}z_n z_n^0
=\int_\Omega\sum_{i,j=1}^N a_{ij}(x)D_{x_i}z_n D_{x_j}z_n^0
+\lambda\int_\Omega z_n z_n^0.
\]
By orthogonality of the decomposition, we have
\[
\int_\Omega\sum_{i,j=1}^N a_{ij}(x)D_{x_i}z_n D_{x_j}z_n^0
+\lambda\int_\Omega z_n z_n^0
=\int_\Omega\sum_{i,j=1}^N a_{ij}(x)D_{x_i}z_n^0 D_{x_j}z_n^0
+\lambda\int_\Omega |z_n^0|^2 = 0.
\]
Therefore,
\[
\int_\Omega |z_n|^{p-2}z_n z_n^0 = 0.
\]

Passing to the limit and using the strong convergence of $z_n$, we obtain
\[
\int_\Omega |z|^p = 0,
\]
which implies $z \equiv 0$.

This contradicts $\|z_n\|=1$, and the proof is complete.

\end{proof}


\begin{thebibliography}{10}

\bibitem{arcoyaboccardo1}
David Arcoya and Lucio Boccardo.
\newblock Critical points for multiple integrals of the calculus of variations.
\newblock {\em Arch. Rational Mech. Anal.}, 134(3):249--274, 1996.


\bibitem{candela2006multiple}
Anna Maria Candela and Giuliana Palmieri.
\newblock  Multiple solutions of some nonlinear variational problems.
\newblock {\em Advanced Nonlinear Studies}, 6(2):269--286, 2006.


\bibitem{candela2009some}
Anna~Maria Candela and Giuliana Palmieri.
\newblock Some abstract critical point theorems and applications.
\newblock  {\em Discrete Continuous Dynamical Systems}, Suppl 2009, 133--142, 2009
  
    \bibitem{candela2009infinitely}
Anna~Maria Candela and Giuliana Palmieri.
\newblock Infinitely many solutions of some nonlinear variational equations.
\newblock {\em Calculus of Variations and Partial Differential Equations},
  34(4):495--530, 2009.

  \bibitem{caninoquasilineare1}
Annamaria Canino.
\newblock Multiplicity of solutions for quasilinear elliptic equations.
\newblock {\em Topol. Methods Nonlinear Anal.}, 6(2):357--370, 1995.

\bibitem{caninoserdica}
Annamaria Canino.
\newblock On a variational approach to some quasilinear problems.
\newblock {\em Serdica Math. J.}, 22(3):297--324, 1996.


\bibitem{nonsmooththeory1}
Annamaria Canino and Marco Degiovanni.
\newblock Nonsmooth critical point theory and quasilinear elliptic equations.
\newblock In {\em Topological methods in differential equations and inclusions
  ({M}ontreal, {PQ}, 1994)}, volume 472 of {\em NATO Adv. Sci. Inst. Ser. C:
  Math. Phys. Sci.}, pages 1--50. Kluwer Acad. Publ., Dordrecht, 1995.

 \bibitem{CaninoMauroNeumann2026}
Annamaria Canino and Simone Mauro. Quasilinear Equations with Neumann Boundary Conditions. \emph{Mediterr. J. Math.} 23, 44 (2026). 

\bibitem{CaninoMauroCoopCompSystem2025}
Annamaria Canino, and Simone Mauro.
Quasilinear Elliptic Cooperative and Competitive Systems.
\emph{arXiv preprint} arXiv:2510.18758 (2025).

\bibitem{corvellec1993deformation}
Jean-No\"el Corvellec, Marco Degiovanni, and Marco Marzocchi.
\newblock Deformation properties for continuous functionals and critical point
  theory.
\newblock {\em Topol. Methods Nonlinear Anal.}, 1(1):151--171, 1993.

\bibitem{degiovanni1994critical}
Marco Degiovanni and Marco Marzocchi.
\newblock A critical point theory for nonsmooth functional.
\newblock {\em Annali di Matematica Pura ed Applicata}, 167:73--100, 1994.

\bibitem{gilbarg2001elliptic}
David Gilbarg and Neil~S. Trudinger.
\newblock Elliptic partial differential equations of second order.
\newblock {\em Springer}, Berlin, 2001.

\bibitem{gidasspruck1981global}
Basilis Gidas and Joel Spruck.
\newblock Global and local behavior of positive solutions of nonlinear elliptic equations.
\newblock {\em Communications on Pure and Applied Mathematics}, 34(4):525--598, 1981.

{
\bibitem{kivshar2003optical}
Yuri S. Kivshar and Govind P. Agrawal.
\newblock \emph{Optical Solitons: From Fibers to Photonic Crystals}.
\newblock Academic Press, 2003.
}

\bibitem{kwong1989uniqueness}
Man Kam Kwong.
\newblock Uniqueness of positive solutions of $\Delta u-u+u^p=0$ in $\mathbb R^N$.
\newblock {\em Archive for Rational Mechanics and Analysis}, 105(3):243--266, 1989.

\bibitem{ladyzhenskaya1968linear}
Olga~A. Ladyzhenskaya and Nina~N. Uraltseva.
\newblock Linear and quasilinear elliptic equations.
\newblock {\em Academic Press}, New York, 1968.

\bibitem{Lieberman1988}
Gary~M.~Lieberman,
\newblock Boundary regularity for solutions of degenerate elliptic equations,
\newblock \emph{Nonlinear Analysis: Theory, Methods \& Applications}, 
12 (1988), no.~11, 1203--1219.

\bibitem{lin1988large}
Chang-Shou Lin, Wei-Ming Ni, and Izumi Takagi.
\newblock Large amplitude stationary solutions to a chemotaxis system.
\newblock {\em Journal of Differential Equations}, 72(1):1--27, 1988.

\bibitem{lin2006diffusion}
Chang-Shou Lin and Wei-Ming Ni.
\newblock On the diffusion coefficient of a semilinear Neumann problem.
\newblock In {\em Calculus of Variations and Partial Differential Equations:
  Proceedings of a Conference held in Trento, Italy June 16--21, 1986}, pages
  160--174. Springer, 2006.

  \bibitem{nardi2015schauder}
Giacomo Nardi.
\newblock Schauder estimate for solutions of poisson's equation with Neumann
  boundary condition.
\newblock {\em L'enseignement Math{\'e}matique}, 60(3):421--435, 2015.



\bibitem{ni1991shape}
Wei-Ming Ni and I. Takagi.
\newblock On the shape of least-energy solutions to a semilinear Neumann problem.
\newblock {\em Communications on Pure and Applied Mathematics}, 44(7):819--851, 1991.

\bibitem{ni1993peaks}
Wei-Ming Ni and Izumi Takagi.
\newblock Locating the peaks of least-energy solutions to a semilinear Neumann problem.
\newblock {\em Duke Mathematical Journal}, 70(2):247--281, 1993.

{
\bibitem{pitaevskii2016bose}
Lev Pitaevskii and Sandro Stringari.
\newblock \emph{Bose--Einstein Condensation and Superfluidity}.
\newblock Oxford University Press, 2016.
}

\bibitem{pankov2005}
Alexander Pankov.
\newblock Periodic nonlinear Schr\"odinger equation with application to photonic crystals.
\newblock {\em Milan Journal of Mathematics}, 73:259--287, 2005.

\bibitem{pellacciPistoiaVairaVerzini2021}
{
Benedetta Pellacci, Angela Pistoia, Giusi Vaira, and Gianmaria Verzini.
\newblock Normalized concentrating solutions to nonlinear elliptic problems.
\newblock {\em Journal of Differential Equations}, 275:882--919, 2021.
}

\bibitem{pistoiaRamos2004}
{
Angela Pistoia and Miguel Ramos.
\newblock Locating the peaks of the least energy solutions to an elliptic system with Neumann boundary conditions.
\newblock {\em Journal of Differential Equations}, 201(1):160--176, 2004.
}

\bibitem{pistoiaSaldanaTavares2023}
{
Angela Pistoia, Alberto Salda\~na, and Hugo Tavares.
\newblock Existence of solutions to a slightly supercritical pure Neumann problem.
\newblock {\em SIAM Journal on Mathematical Analysis}, 55(4):3844--3887, 2023.
}

\bibitem{pistoiaSchiera2025}
{
Angela Pistoia and Delia Schiera.
\newblock On a critical Hamiltonian system with Neumann boundary conditions.
\newblock {\em Topological Methods in Nonlinear Analysis}, 66(1):187--213, 2025.
}






\bibitem{szulkin2009ground}
Andrzej Szulkin and Tobias Weth.
\newblock Ground state solutions for some indefinite variational problems.
\newblock {\em Journal of Functional Analysis}, 257(12):3802--3822, 2009.

\bibitem{szulkin2010method}
Andrzej Szulkin and Tobias Weth.
\newblock The method of the Nehari manifold.
\newblock {\em Handbook of nonconvex analysis and applications}, 597632, 2010.

\bibitem{willem1996minimax}
Michel Willem.
\newblock {\em Minimax Theorems}. Progress in Nonlinear Differential Equations and Their Applications, Birkh\"auser Boston Inc, Boston, 1996.

  
\end{thebibliography}
\end{document}